\numberwithin{equation}{section}
\numberwithin{equation}{subsection}
\theoremstyle{plain}
\newtheorem{theorem}[equation]{Theorem}
\newtheorem{lemma}[equation]{Lemma}
\newtheorem{proposition}[equation]{Proposition}
\newtheorem{corollary}[equation]{Corollary}
\newtheorem{thm}[equation]{Theorem}
\newtheorem{prop}[equation]{Proposition}
\theoremstyle{definition}
\newtheorem{example}[equation]{Example}
\newtheorem{remark}[equation]{Remark}
\def\C{\mathbb C}
\def\Q{\mathbb Q}
\def\R{\mathbb R}
\def\Z{\mathbb Z}
\newcommand{\cale}{{\mathcal E}}
\newcommand{\calv}{{\mathcal V}}
\newcommand{\calt}{{\mathcal T}}
\newcommand{\cali}{{\mathcal I}}
\newcommand{\calj}{{\mathcal J}}
\newcommand{\calg}{{\mathcal G}}
\newcommand{\calF}{{\mathcal F}}\newcommand{\calf}{{\mathcal F}}
\newcommand{\calO}{{\mathcal O}}
\newcommand{\calS}{{\mathcal S}}
\newcommand{\calP}{\mathcal{P}}
\newcommand{\caln}{\mathcal{N}}
\newcommand{\bt}{{\mathbf t}}
\newcommand{\frsw}{\mathfrak{sw}}
\newcommand{\bZ}{{\mathbb{Z}}}
\newcommand{\bQ}{{\mathbb{Q}}}
\author{Tam\'as L\'aszl\'o}
\address{BCAM - Basque Center for Applied Math.,
Mazarredo, 14 E48009 Bilbao, Basque Country – Spain}
\email{tlaszlo@bcamath.org}
\author{J\'anos Nagy}
\address{Central European University, Dept. of Mathematics,  Budapest, Hungary}
\email{nagy\textunderscore janos@phd.ceu.edu}
\author{Andr\'as N\'emethi}
\address{Alfr\'ed R\'enyi Institute of Mathematics,
Hungarian Academy of Sciences,
Re\'altanoda utca 13-15, H-1053, Budapest, Hungary \newline
 \hspace*{4mm} ELTE - University of Budapest, Dept. of Geometry, Budapest, Hungary \newline \hspace*{4mm}
BCAM - Basque Center for Applied Math.,
Mazarredo, 14 E48009 Bilbao, Basque Country – Spain}
\email{nemethi.andras@renyi.mta.hu }
\title{Combinatorial duality for Poincar\'e series, polytopes and invariants of plumbed 3--manifolds}
\begin{document}

\keywords{Normal surface singularities, links of singularities,
plumbing graphs, rational homology spheres, Seiberg--Witten invariant, Poincar\'e series, quasipolynomials, surgery formula, periodic constant, Ehrhart polynomials, Ehrhart--Macdonald--Stanley reciprocity law, Gorenstein
duality}

\subjclass[2010]{Primary. 32S05, 32S25, 32S50, 57M27
Secondary. 14Bxx, 14J80, 57R57}

\begin{abstract}
Assume that the link of a complex normal surface singularity is a rational homology sphere.
Then its Seiberg--Witten invariant can be computed as the `periodic constant' of the topological multivariable
Poincar\'e series (zeta function). This involves  a complicated regularization procedure
(via quasipolynomials measuring the asymptotic behaviour of the coefficients).

We show that
the (a Gorenstein type) symmetry of the zeta function combined with  Ehrhart--Macdonald--Stanley reciprocity
 (of Ehrhart theory of polytopes) provide a simple expression for the periodic cosntant.
 Using these dualities we  also find a multivariable polynomial generalization of the
 Seiberg--Witten invariant, and we compute it in terms of lattice points of certain polytopes.
 
 All these invariants are also determined via lattice point counting, in this way we establish a completely general topological analogue of formulae of Khovanskii  and Morales valid for 
 singularities with non-degenerate Newton principal part. 
\end{abstract}

\maketitle

\setcounter{tocdepth}{1}
\tableofcontents

\linespread{1.2}


\pagestyle{myheadings} \markboth{{\normalsize T. L\'aszl\'o, J. Nagy, A. N\'emethi}} {{\normalsize Duality}}


\section{Introduction}

\subsection{} In order to study the topological invariants of links of complex normal surface singularities one usually uses the dual resolution  graph of the singularity. Indeed, such links
appear as plumbed 3--manifolds associated with these graphs, and the combinatorics of the
graph is an ideal source to codify the topological invariants, even if they were originally defined by
completely different methods of algebraic or differential topology. E.g., in order to determine the Seiberg--Witten invariants of a rational homology sphere link, a possible procedure is the following.
Firstly, one defines from the graph a multivariable rational function, the so--called `zeta function'
$Z(\bt)$
(it is also called the `multivariable topological Poincrar\'e series', as the topological analogue
of a `multivariable analytical Poincar\'e series' introduced by Campillo, Delgado and Gusein-Zade \cite{CDGPs,CDGEq}). This has a natural decomposition
$Z=\sum _{h\in H}Z_h$, where $H$ is the first homology of the link $M$ (and it indexes
also ${\rm Spin}^c(M)$). Then one shows that a summation associated with  special truncations
of the coefficients of $Z_h$ behaves like a quasipolynomial. And finally it turns out  that the
evaluation of the quasipolynomial at the origin (this value is called the `periodic constant
${\rm pc}(Z_h)$ of
$Z_h$') provides the Seiberg--Witten invariant
associated with the corresponding $spin^c$--structure indexed by $h$. (For details see section \ref{s:prliminaries}.)

This is a rather complex
regularization procedure. Usually it is hard to find the quasipolynomial, one needs to know all the coefficients of $Z_h$ and to understand their asymptotic behaviour.

In fact, there exists an expected reformulation/generalization
as follows. Particular examples and families suggest that there must exists an object which
is even more general and guides the above periodic constant computation as well: 
one predicts a unique canonical decomposition of $Z_h$ into a sum
$Z_h^{neg}+P^+_h$ such that $Z_h^{neg}$ is a rational function `with negative degree'
(or, with zero periodic constant) and $P^+_h$
a finite polynomial, such that $P^+_h(1)={\rm pc}(Z_h)$. In this way, one finds a
multivariable polynomial generalization of the Seiberg--Witten invariants, namely $P^+_h$.

The first goal of the present article is to provide from $Z(\bt)$  a simple expression for
${\rm pc}(Z_h)$, or for the Seiberg--Witten invariants of the link.
This is done using combination of two dualities: one of them is the topological trace of
a Gorenstein type duality, the second is the Ehrhart--Macdonald--Stanley duality of Ehrhart theory. 
It turns out that ${\rm pc}(Z_h)$ is an easy precise sum of coefficient of the
`dual' series $Z_{[Z_K]-h}$, where $[Z_K]-h \in H$ is the Gorenstein type dual of $h$. The second goal is to make the connection with
Ehrhart theory deeper: in this way we show that the decomposition $Z_h^{neg}+P^+_h$ exists indeed,
and we  determine each $P^+_h$ in language of lattice points of some polytopes. In this way
we get such an expressions for the Seiberg--Witten invariants as well.

\subsection{} Let us provide a more detailed discussion regarding the connection with lattice polytopes.

The interactions between polyhedral combinatorics and algebraic geometry is a classical and very intensive research theme. The most classical example is the toric geometry which compiles questions of algebraic geometry in terms of combinatorics of  convex cones and lattice polytopes. This leads to an infusion of combinatorial methods and formulas into the geometry and topology of complex analytic/algebraic varieties.

In the theory of complex normal surface singularities there are several  results which establish the above interaction in the case of hypersurface, or, more generally, of
isolated complete intersection singularities,  with  Newton non-degenerate principal part.
For such germs one defines Newton polyhedrons using the nontrivial monomials of the defining equations, from which several invariants are expressed, see e.g. \cite{AGZV,Kouch,MT,Var,BNnewt,Baldur,Khov,Mor,Okzeta,OkNdicis,BA}.

From a purely topological point of view, a formula by \cite{FS} provides the Casson invariant of a Brieskorn homology sphere (as the Euler characteristic of instanton Floer homology) in terms of counting lattice points inside a tetrahedron in $\R^3$ (see also \cite{NWcas}). In fact, this lattice point counting formula can also be generalized to Seifert homology spheres with at most 4 singular fibers \cite{Nic01}.
Note that for   rational homology sphere 3--manifolds  the Seiberg--Witten invariant is 
 the generalization of the Casson invariant. Furthermore, for links of Newton non-degenerate 
 hypersurface germs the Seiberg--Witten invariant can be equated with the geometric genus of the singularity proven by \cite{Baldur}, for which a lattice point counting formula in terms of the Newton polyhedron is given by \cite{MT}. However, this family  of germs is rather restrictive and does not give (even the idea) how to find  topologically the polytopes in general.
Thus, the following goal is
 very natural:

 \vspace{2mm}

 {\bf Problem:}
 {\em Find an explicit lattice point counting interpretation of the Seiberg--Witten invariant using certain `topological'  polytopes associated with the link}.

  \vspace{2mm}

   In this direction a new channel has been opened in the work of the first and the last author \cite{LN} by developing a connection between the Seiberg--Witten invariant of rational homology sphere links and equivariant multivariable Ehrhart theory of dilated polytopes:
it has been shown that equivariant multivariable Ehrhart quasipolynomials (endowed with  their Ehrhart--Macdonald--Stanley reciprocity) appear naturally in the study of multivariable
 rational `zeta functions' using the coefficient function and the periodic constant of their Taylor expansion at the origin. However, in that work the 
 strength of the reciprocity and its geometric reinterpretations were not exploited totally. 
   Though this connection suggested the existence of a {\em `polynomial -- negative degree part' decomposition of multivariable topological Poincar\'e series}, hence  a
    polynomial generalization of the Seiberg--Witten invariant, and it generated an intense activity
  (see e.g. \cite{LN,LSz,LSzdiv}), the complete explanation waited till the present manuscript.

\subsection{} The final output of the present paper is a complete answer to the Problem formulated above. The main result transforms the equivariant Ehrhart-Macdonald-Stanley reciprocity to the level of series, giving a duality between two objects:
the periodic constant of the Taylor expansion at the origin and a finite sum of coefficients of the Taylor expansion at infinity.

This, applied to the multivariable topological Poincar\'e series of normal surface singularities, gives rise to an identification of the Seiberg--Witten invariants indexed by $h$ with a finite sum of coefficients
of the `dual' series indexed by $([Z_k]-h)$. (Here $-Z_K$ is the cycle of the dualizing sheaf.)
 This can be regarded as an extension of the $Z_K$--symmetry given by the
  Riemann--Roch formula for the Euler characteristic of line bundles on the resolution. One can also be viewed as a combinatorial analog of the Laufer's duality for equivariant geometric genera of the singularity.

  Moreover, it turns out that this duality at the level of series
   provides the wished  multivariable `polynomial -- negative degree part' decomposition of the Poincar\'e series as well. It also motivates the definition of the topological polytopes associated with a plumbing graph of any rational homology sphere link, which provides
    an inclusion--exclusion lattice point counting formula for the Seiberg--Witten invariants, similar to the one of Khovanskii \cite{Khov} and Morales \cite{Mor} for the geometric genus of Newton non-degenerate isolated complete intersection singularities.

\subsection{}
The organization of the paper is the following. Section \ref{s:prliminaries} contains preliminaries about plumbing graphs, manifolds, their Seiberg--Witten invariants, and also Poincar\'e series and their counting functions and periodic constants. In section \ref{s:EMS} we shortly review the equivariant multivariable Ehrhart theory from \cite{LN,LSz}, then we formulate the Ehrhart--Macdonald--Stanley duality for rational functions. Section \ref{s:4} contains  the study of the duality for multivariable topological Poincar\'e and the  identification of the Seiberg--Witten invariants with finite sum of certain coefficients of the dual series. Section \ref{s:PP} describes   the `polynomial -- negative degree part' decomposition. It contains  a brief motivation and history of the problem, and
some examples as well. Lastly, in section \ref{s:tPolSW} we define the topological polytopes and we prove the lattice point counting formula for the polynomial part of the Poincar\'e series, in particular, for the Seiberg--Witten invariants. The results of this section are illustrated on a concrete example in Example \ref{ex:tPolSW}. We also discuss the similarities with the Khovanskii--Morales formulas using Minkowski sums of associated polyhedrons.


\subsection*{Acknowledgements}
TL was supported by ERCEA Consolidator Grant 615655 – NMST and
by the Basque Government through the BERC 2014-2017 program and by Spanish
Ministry of Economy and Competitiveness MINECO: BCAM Severo Ochoa
excellence accreditation SEV-2013-0323.

AN  was partially supported by
ERC Adv. Grant LDTBud of A. Stipsicz at R\'enyi Institute of Math., Budapest.
JN was partially supported by NKFIH Grant K119670,
JN and AN were  partially supported by  NKFIH Grant  K112735.

\section{Preliminaries}\label{s:prliminaries}

For more details regarding plumbing graphs, plumbed manifolds and their relations with normal surface singularities see \cite{BN,EN,Nfive,NJEMS,NN1,NOSZ,trieste,NPS,NWsq};
for Poincar\'e series see also \cite{CDGPs,CDGEq}.

\subsection{Plumbing graphs. Plumbed 3--manifolds.}\label{ss:PGP}
We fix a connected plumbing graph $\Gamma$
whose associated intersection matrix is negative definite. We denote the corresponding oriented
plumbed 3--manifold by
$M=M(\Gamma)$. In this article we always assume that $M$ is a  rational homology sphere,
equivalently, $\Gamma$ is a tree with all genus
decorations  zero.

We use the notation $\mathcal{V}$ for the set of vertices, $\delta_v$ for the valency of a vertex $v$,
$\mathcal{N}$ for the set of nodes (vertices with $\delta_v\geq 3$), and
$\mathcal {E}$ for the set of  end--vertices (vertices with $\delta_v=1$).

Let $\widetilde{X}$ be the plumbed 4--manifold with boundary associated with
$\Gamma$, hence $\partial \widetilde{X} = M$. Its second
homology $L:=H_2(\widetilde{X},\mathbb{Z})$ is a lattice, freely generated by the classes of 2--spheres $\{E_v\}_{v\in\mathcal{V}}$, with a negative definite intersection form  $(\,,\,)$. Furthermore, $H^2(\widetilde{X},\mathbb{Z})$
can be identified with the dual lattice  $L':={\rm Hom}_\bZ(L,\bZ)=\{l'\in L\otimes\bQ\,:\, (l',L)\in\bZ\}$.
$L'$  is generated
by the (anti)dual classes $\{E^*_v\}_{v\in\mathcal{V}}$ defined by $(E^{*}_{v},E_{w})=-\delta_{vw}$, the opposite of the Kronecker symbol.
One has the inclusions  $L\subset L'\subset L\otimes \bQ$, and  $H_1(M,\mathbb{Z})\simeq L'/L$, denoted by $H$. We write $[x]$
for the class of $x\in L'$ in $H$.

For any $h\in H$ let $r_h=\sum_vl'_vE_v\in L'$ be its unique representative
with all $l'_v\in[0,1)$.

$L'$ carries a partial ordering induced by $l'=\sum_vl'_vE_v\geq 0$ if and only if  each $l'_v\geq 0$.

The 4--manifold $\widetilde{X}$ has a complex structure.
In fact, any such $M(\Gamma)$ is the link of a complex normal surface singularity
$(X,o)$, which has a resolution $\widetilde{X}\to X$ with
resolution graph $\Gamma$ (see e.g. \cite{Nfive}),
and the complex analytic smooth surface $\widetilde{X}$ as a $C^\infty$--manifold is the plumbed 4--manifold
 associated with $\Gamma$.
Let $K\in L'$ be its canonical cycle.    Though the complex structure is not unique,
 $K$ is determined topologically by $L$ via the adjunction formulae $(K+E_v,E_v)+2=0$ for all $v$.
In some cases, it is more convenient to use $Z_K:=-K\in L'$, hence, by adjunction formulae,
 \begin{equation}\label{ZK}
 Z_K-E=\sum_{v\in\mathcal{V}}(\delta_v-2)E^*_v.
 \end{equation}
In the sequel   $|A|$ denotes the cardinality of the set $A$.

\subsection{The series $Z(\mathbf{t})$.}
The  \emph{multivariable topological Poincar\'e series} is the
Taylor expansion $Z(\mathbf{t})=\sum_{l'} z(l')\bt^{l'}
\in\bZ[[L']] $ at the  origin of the `zeta-function'
\begin{equation}\label{eq:1.1}
f(\mathbf{t})=\prod_{v\in \mathcal{V}} (1-\mathbf{t}^{E^*_v})^{\delta_v-2},
\end{equation}
where
$\bt^{l'}:=\prod_{v\in \mathcal{V}}t_v^{l'_v}$  for any $l'=\sum _{v\in \mathcal{V}}l'_vE_v\in L'$ ($l'_v\in\bQ$), cf. \cite{CDGPs,CDGEq,NPS}.
It decomposes as $Z(\mathbf{t})=\sum_{h\in H}Z_h(\mathbf{t})$, where $Z_h(\mathbf{t})=\sum_{[l']=h}z
 (l')\bt^{l'}$. The expression
(\ref{eq:1.1}) shows that  $Z(\mathbf{t})$ is supported in the \emph{Lipman cone} $\mathcal{S}':=\mathbb{Z}_{\geq0}\langle E^{*}_{v}\rangle_{v\in\mathcal{V}}$.
Since  $(\,,\,)$ is negative definite, all the entries of
$E_v^*$ are strict positive, hence $\calS'\subset
\{\sum_vl'_vE_v\,:\, l'_v>0\}\cup\{0\}$.
Thus (cf. \cite[(2.1.2)]{NJEMS}) for any $x$,
\begin{equation}\label{eq:finite}
\{l'\in \calS'\,:\, l'\not\geq x\} \ \ \mbox{is finite}.\end{equation}
Fix $h\in H$.  We define a `counting function'
of the coefficients of $Z_h$ by
\begin{equation}\label{eq:countintro}
Q_h: L'_h:=\{x \in L'\,:\, [x]=h\}\to \bZ, \ \ \ \
Q_{h}(x)=\sum_{l'\ngeq x,\, [l']=h} z(l').
\end{equation}

The appearance  of this type of  truncation $\{l'\ngeq x,\, [l']=h\}$ for the
counting function
in the above sum is motivated by the results  below, see
e.g. \ref{eq:SUM} (or \cite{NCL}).

\subsection{Seiberg--Witten invariants of $M$.}
 Let
$\widetilde{\sigma}_{can}$ be the {\it canonical
$spin^c$--structure on $\widetilde{X}$} identified by $c_1(\widetilde{\sigma}_{can})=Z_K$,
and let $\sigma_{can}\in \mathrm{Spin}^c(M)$
 be its restriction to $M$, called the {\it canonical
$spin^c$--structure on $M$}. $\mathrm{Spin}^c(M)$ is an $H$--torsor
 with action denoted by $*$.

 We denote by $\mathfrak{sw}_{\sigma}(M)\in \bQ$ the
\emph{Seiberg--Witten invariant} of $M$ indexed by the $spin^c$--structures $\sigma\in {\rm Spin}^c(M)$ (cf. \cite{Lim, Nic04}).
(We will use the sign convention of \cite{BN,NJEMS}.)

In the last years,  several combinatorial expressions were established for the Seiberg--Witten invariants.   For rational homology spheres,
Nicolaescu \cite{Nic04} showed  that $\mathfrak{sw}(M)$ is
equal to the Reidemeister--Turaev torsion normalized by the Casson--Walker invariant. In the case when $M$ is a negative
definite plumbed rational homology sphere, a combinatorial formula for Casson--Walker invariant in terms of the plumbing graph can be found in Lescop
\cite{Lescop}, and  the Reidemeister--Turaev torsion is determined by N\'emethi and Nicolaescu \cite{NN1} using Dedekind--Fourier sums.

A different  combinatorial formula of $\{\mathfrak{sw}_\sigma(M)\}_\sigma$ was proved  in \cite{NJEMS} using qualitative properties of the coefficients of the series $Z(\mathbf{t})$.

\begin{thm} \label{th:JEMS}\ \cite{NJEMS}
For any $l'\in -K+ \textnormal{int}(\mathcal{S}')$ (where $\textnormal{int}(\mathcal{S}')=
\Z_{>0}\langle E^*_v\rangle_{v\in \calv}$)
\begin{equation}\label{eq:SUM} - Q_{[l']}(l')=
\frac{(K+2l')^2+|\mathcal{V}|}{8}+\mathfrak{sw}_{[-l']*\sigma_{can}}(M).
\end{equation}
\end{thm}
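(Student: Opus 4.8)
The strategy I would follow is to read \eqref{eq:SUM} as the conjunction of two independent assertions and to prove them separately. Fixing $h\in H$ and regarding the left--hand side as a function of $l'$ ranging over the $l'\in-K+\textnormal{int}(\mathcal{S}')$ with $[l']=h$, the claim splits into: \emph{(a)} the map $l'\mapsto -Q_{h}(l')$ is a polynomial whose non--constant part is the Riemann--Roch quadratic $\tfrac{(l',l'+K)}{2}=\tfrac{(K+2l')^2-K^2}{8}$; and \emph{(b)} its constant part equals $\tfrac{K^2+|\mathcal{V}|}{8}+\mathfrak{sw}_{[-l']*\sigma_{can}}(M)$. Adding these gives \eqref{eq:SUM}. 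Assertion (a) is a purely combinatorial statement about the coefficients $z(l')$ of the rational function $f(\bt)$ of \eqref{eq:1.1}; assertion (b) identifies the resulting number --- the periodic constant of $Z_h$ --- with a known topological invariant.

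For (a) I would expand $f(\bt)$ by an iterated partial--fraction (residue) decomposition along the hyperplanes $\bt^{E^*_v}=1$. Because $(\,,\,)$ is negative definite, every $E^*_v$ has strictly positive coordinates, so \eqref{eq:finite} applies and yields both that $Q_h$ is well defined and that, once $x$ is pushed far into $\mathcal{S}'$, the truncated sums $\sum_{l'\ngeq x,\ [l']=h}z(l')$ agree with the value of a quasipolynomial in $x$. One then has to show that on a fixed coset $L'_h$ and on the shifted open cone this quasipolynomial reduces to a genuine polynomial in $l'$ whose non--constant part is the asserted Riemann--Roch quadratic (this part is forced by a direct Euler--characteristic count) and whose constant part, a single rational number, is the periodic constant. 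The tree shape of $\Gamma$ and negative definiteness enter precisely in controlling how the chambers of the residue decomposition meet the Lipman cone and the truncation $\{l'\ngeq x\}$; this bookkeeping is the technical heart of the proof and the step I expect to be the main obstacle.

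For (b) I would invoke the closed combinatorial formula for the Seiberg--Witten invariant recalled in the Preliminaries: by Nicolaescu \cite{Nic04}, $\mathfrak{sw}_\sigma(M)$ is the sign--refined Reidemeister--Turaev torsion of $(M,\sigma)$ normalized by the Casson--Walker invariant; inserting Lescop's graph formula \cite{Lescop} for the latter and computing the torsion over the characters of $H$, N\'emethi--Nicolaescu \cite{NN1} express $\mathfrak{sw}_\sigma(M)$ as a sum of Dedekind--Fourier sums indexed by the vertices of $\Gamma$ (one contribution per pole $E^*_v$) together with a quadratic term. On the other side, the constant produced in (a) is read off from the same residue expansion of $f(\bt)$ and is again such a sum over $\mathcal{V}$; matching the two term by term is an application of Dedekind reciprocity, and the index $[-l']*\sigma_{can}$ merely records which class $h=[l']$, through its representative $r_h$, one has restricted to. A route that sidesteps the explicit Dedekind--sum identities would be an induction on $|\mathcal{N}|+|\mathcal{E}|$: the base case is a string, i.e.\ a lens space, where $f(\bt)$ has two variables and both sides are elementary, and the inductive step couples the plumbing--calculus surgery formula for $\mathfrak{sw}$ with the matching gluing (convolution) behaviour of $Z$ and of $Q_h$ under splitting $\Gamma$ at an edge; on that route the main obstacle becomes the compatibility of the truncation $\{l'\ngeq x\}$ with the splitting.
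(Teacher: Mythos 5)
The paper you are reading does not prove Theorem~\ref{th:JEMS}: the statement is imported verbatim, with the citation \cite{NJEMS}, from N\'emethi's 2011 JEMS paper, and the text that follows only repackages it as Theorem~\ref{th:NJEMSThm} and draws consequences. So there is no in-paper proof to compare your sketch against; the quasipolynomiality of $Q_h$ on the shifted Lipman cone and the identification of its free term with $\mathfrak{sw}$ are treated here as a black box.

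Assessed on its own as a sketch of how the cited result is actually established, your outline is plausible and tracks the two routes the Preliminaries themselves point to: the torsion/Casson--Walker route of \cite{Nic04}, \cite{Lescop}, \cite{NN1} via Dedekind--Fourier sums, and the inductive plumbing-calculus surgery route, the latter being closer to what \cite{NJEMS} really does (reduction to star-shaped graphs plus the Seifert computation). Two caveats. First, in your step (a) the phrase that the Riemann--Roch quadratic is ``forced by a direct Euler--characteristic count'' is misleading: there is no coherent cohomology in this purely topological setting, the quadratic $\tfrac{(K+2l')^2}{8}$ has to be extracted combinatorially from the leading terms of the Ehrhart-type quasipolynomial attached to $f(\bt)$, and that extraction is a nontrivial lattice argument, not a one-line Euler characteristic. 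Second, in step (b) the induction does not terminate in an elementary lens-space check: the base case is the full Seifert (star-shaped) case, where matching the torsion side with the series side already involves explicit Dedekind/Fourier sum identities, and the inductive step must show that the truncation $\{l'\ngeq x\}$ is compatible with splitting $\Gamma$ at an edge, which is precisely the kind of bookkeeping you (rightly) flag as the main obstacle. The skeleton of your argument is sound, but both places you label as the technical heart are exactly where the actual proof in \cite{NJEMS} lives, and your sketch does not supply them.
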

\noindent If we fix $h\in H$ and we write  $l'=l+r_{h}$ with $l\in L$,
then the right--hand side of (\ref{eq:SUM}) is
 a multivariable quadratic polynomial on $L$,
 a fact which
  will be exploited conceptually  next.

\subsection{\bf Periodic constants.}\label{ss:perConst}
 One of the key tools of the present article is an invariant associated with series motivated by properties of Hilbert--Samuel functions used in algebraic geometry and
singularity theory.
 It creates a bridge with Ehrhart theory and the properties of its quasipolynomials.
It is called the {\it periodic constant} of the series.
For  one--variable series they were introduced
in \cite{NOk,Ok}, see also \cite{BN},
the multivariable generalization is treated in  \cite{LN}.

Let $S(t)=\sum_{l\geq0}c_l t^l \in
\mathbb{Z}[[t]]$ be a formal power series with one variable.
Assume that for some $p\in \mathbb{Z}_{>0}$ the counting function
$Q^{(p)}(n):=\sum_{l=0}^{pn-1}c_l$ is a polynomial $\mathfrak{Q}^{(p)}$
in $n$. Then the constant term $\mathfrak{Q}^{(p)}(0)$ is independent of $p$ and it is called the \emph{periodic
constant} $\mathrm{pc}(S)$ of the series $S$. E.g.,
if $S(t)$ is a finite polynomial, then $\mathrm{pc}(S)$ exists and it equals
$S(1)$.
If the coefficients of $S(t)$ are given by a Hilbert function $l\mapsto c(l)$,
which admits a Hilbert polynomial $H(l)$ with $c(l)=H(l)$ for $l\gg 0$, then one shows that
$S^{reg}(t)=\sum_{l\geq 0}H(l)t^l$ has zero periodic constant and
$\mathrm{pc}(S)=\mathrm{pc}(S-S^{reg})+\mathrm{pc}(S^{reg})=(S-S^{reg})(1)$,
measuring the difference between the Hilbert function and Hilbert polynomial.

For $\mathrm{pc}(S)$, valid for the Taylor expansion of special rational functions,
see  \ref{ss:PPmothist}.

For the multivariable case we consider a (negative) definite lattice
$L=\Z\langle E_v\rangle_v$,
 its dual lattice $L'$, a series $S(\mathbf{t})\in \bZ[[L']]$
(e.g. $Z(\bt)$), and its well-defined counting function $Q_h=Q_h(S(\bt))$ as in (\ref{eq:countintro}) for fixed $h\in L'/L$. (In fact, the definition extends to the case of any
two free $\Z$--modules $L\subset L'$ of the same rank with $H=L'/L$, the setup of Section
\ref{s:EMS}.)
Assume that there exist
 a real cone $\mathcal{K}\subset L'\otimes\mathbb{R}$ whose affine closure is  top--dimensional,  $l'_* \in \mathcal{K}$,
 a sublattice $\widetilde{L} \subset L$ of finite index,
and  a quasipolynomial $\mathfrak{Q}_h(l)$ ($l\in \widetilde{L}$)
(written also as  $\mathfrak{Q}^{{\mathcal K}}_h(l)$)
such that $ Q_h(l+r_h)=\mathfrak{Q}_h(l)$  for any
$l+r_h\in (l'_* +\mathcal{K})\cap (\widetilde {L}+r_h)$. Then we say that
 the counting function $Q_h$ (or just $S_h(\mathbf{t})$)
 admits a quasipolynomial in $\mathcal{K}$, namely $\mathfrak{Q}_h(l)$,  and
also  an (equivariant, multivariable)  {\em periodic constant}
associated with $\mathcal{K}$,  which is defined by
\begin{equation}\label{eq:PCDEF}
\mathrm{pc}^{\mathcal{K}}(S_h(\bt)) :=\mathfrak{Q}_h(0).
\end{equation}
 The definition does not depend on the choice of the sublattice $\widetilde L$, 
 which corresponds to the choice of
 $p$ in the one--variable case.
 This is responsible for the name `periodic' in the definition.
 The definition is independent of the choice of $l'_*$ as well.

By the general theory of multivariable Ehrhart-type quasipolynomials
(counting lattice points with special coefficients
 in polytopes attached to $Z(\mathbf{t})$)
 one can construct a conical chamber decomposition of the space $L'\otimes\mathbb{R}$, such that each cone satisfies the above definition
 (hence provides a periodic constant), for details see \cite{LN} or \cite{SzV}.
This decomposition, in principle,  divides $\mathcal{S}'_{\mathbb{R}}
:= {\mathbb R}_{\geq 0}\langle E^*_v\rangle _{v\in \calv}$
into several sub--cones (hence, providing different quasipolynomials and periodic constants associated with these sub--cones  of $\mathcal{S}'_{\mathbb{R}}$).
However, Theorem \ref{th:JEMS} guarantees that this is not the case,
the whole $\mathcal{S}'_{\mathbb{R}}$ admits a unique quasipolynomial, in particular,
 a periodic constant (cf. also with \cite{LSz}).
Hence, Theorem \ref{th:JEMS} reads as follows.
\begin{thm}\label{th:NJEMSThm} \ \cite{NJEMS}
The counting function
 of $Z_h(\bt)$ in the cone $S'_{\mathbb{R}}$
admits  the (quasi)polynomial
\begin{equation}\label{eq:SUMQP} \mathfrak{Q}_{h}(l)=
-\frac{(K+2r_h+2l)^2+|\mathcal{V}|}{8}-\mathfrak{sw}_{-h*\sigma_{can}}(M),
\end{equation}
whose periodic constant is
\begin{equation}\label{eq:SUMQP2}
\mathrm{pc}^{S'_{\mathbb{R}}}(Z_h(\mathbf{t}))=\mathfrak{Q}_h(0)=
-\mathfrak{sw}_{-h*\sigma_{can}}(M)-\frac{(K+2r_h)^2+|\mathcal{V}|}{8}.
\end{equation}
\end{thm}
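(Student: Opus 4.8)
The plan is to derive Theorem \ref{th:NJEMSThm} directly from Theorem \ref{th:JEMS} by recognizing that the content of the latter is a pointwise identity on a full sub-cone, and that such an identity is precisely what the definition \eqref{eq:PCDEF} of the equivariant periodic constant requires. First I would fix $h \in H$ and rewrite \eqref{eq:SUM} with the substitution $l' = l + r_h$, $l \in L$, as announced in the remark following Theorem \ref{th:JEMS}. Expanding $(K+2l')^2 = (K + 2r_h + 2l)^2$ and using $\mathfrak{sw}_{[-l']*\sigma_{can}}(M) = \mathfrak{sw}_{-h*\sigma_{can}}(M)$ (which holds because $[-l'] = [-r_h] = -h$ in $H$, and $\mathrm{Spin}^c(M)$ is an $H$-torsor), the right-hand side of \eqref{eq:SUM} becomes exactly $-\mathfrak{Q}_h(l)$ with $\mathfrak{Q}_h$ as in \eqref{eq:SUMQP}. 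Thus Theorem \ref{th:JEMS} says precisely that $Q_h(l+r_h) = \mathfrak{Q}_h(l)$ for all $l+r_h \in (-K + \mathrm{int}(\mathcal{S}')) \cap (L + r_h)$.

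Next I would check that this matches the hypotheses of the periodic-constant definition in \ref{ss:perConst}. Here the relevant cone is $\mathcal{K} = \mathcal{S}'_{\mathbb{R}}$, whose affine closure is top-dimensional since the $E^*_v$ form a basis of $L' \otimes \mathbb{R}$; the shift vector can be taken as $l'_* = -K \in \mathcal{S}'_{\mathbb{R}}$ (note $-K = Z_K \in \mathcal{S}'$ by \eqref{ZK}, as $\delta_v \geq 1$ for all $v$, with the end-vertex contributions giving $Z_K - E \geq -E$, so more carefully one uses that $-K + \mathrm{int}(\mathcal{S}')$ is a translate of the open cone); and the sublattice is simply $\widetilde{L} = L$ itself, so no genuine periodicity correction is needed. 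The function $\mathfrak{Q}_h(l)$ of \eqref{eq:SUMQP} is a genuine polynomial in $l$ (a quadratic, hence a fortiori a quasipolynomial), so all conditions are met and $\mathrm{pc}^{\mathcal{S}'_{\mathbb{R}}}(Z_h(\mathbf{t})) = \mathfrak{Q}_h(0)$ by \eqref{eq:PCDEF}. Evaluating \eqref{eq:SUMQP} at $l=0$ yields \eqref{eq:SUMQP2} immediately.

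The one point that genuinely needs argument, rather than bookkeeping, is the uniqueness/well-definedness claim implicit in the statement: a priori the Ehrhart-type chamber decomposition of $L' \otimes \mathbb{R}$ coming from the general theory (as in \cite{LN,SzV}) could subdivide $\mathcal{S}'_{\mathbb{R}}$ into several sub-cones, on each of which $Q_h$ agrees with a possibly different quasipolynomial, so that "the counting function of $Z_h$ in the cone $\mathcal{S}'_{\mathbb{R}}$ admits the quasipolynomial \eqref{eq:SUMQP}" would be meaningless. The resolution, and the main conceptual input, is exactly Theorem \ref{th:JEMS}: it exhibits a \emph{single} polynomial formula valid on the full-dimensional open subcone $-K + \mathrm{int}(\mathcal{S}')$, which meets the interior of every chamber contained in $\mathcal{S}'_{\mathbb{R}}$; since a quasipolynomial on a region is determined by its values on any sublattice-coset intersected with a full-dimensional open subset, the quasipolynomial attached to each such chamber must coincide with \eqref{eq:SUMQP}, forcing them all to agree. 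Hence $Q_h$ admits the uniform quasipolynomial $\mathfrak{Q}_h$ on all of $\mathcal{S}'_{\mathbb{R}}$, and its periodic constant is unambiguous.

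I expect the main (and only real) obstacle to be making this last uniqueness argument precise: one must invoke the correct statement from the general Ehrhart machinery about how a quasipolynomial is pinned down on a chamber by its restriction to an open piece, and verify that $-K + \mathrm{int}(\mathcal{S}')$ indeed has nonempty intersection with every relevant chamber's interior — which follows because $-K + \mathrm{int}(\mathcal{S}')$ is itself an open cone with the same linear span as $\mathcal{S}'_{\mathbb{R}}$. Everything else is the algebraic expansion of the quadratic form and the torsor identification of $spin^c$-structures, both of which are routine.
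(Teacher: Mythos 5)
Your proposal is correct and follows the same route the paper itself takes: the statement is presented there as a direct reformulation of Theorem \ref{th:JEMS}, with precisely your observation that the Ehrhart chamber decomposition cannot genuinely subdivide $\mathcal{S}'_{\mathbb{R}}$ because Theorem \ref{th:JEMS} exhibits a single polynomial on the full-dimensional affine cone $-K+\mathrm{int}(\mathcal{S}')$, which meets the interior of every chamber and therefore pins down the quasipolynomial on each. One small slip worth flagging, which you partly catch yourself: $Z_K=-K$ need \emph{not} lie in $\mathcal{S}'$ (by (\ref{ZK}) the $E^*_v$-coefficient of $Z_K-E$ is $\delta_v-2=-1$ at every end-vertex), so the correct point is what you say in the second half of your parenthesis: $-K+\mathrm{int}(\mathcal{S}')$ is a full-dimensional \emph{translate} of the open Lipman cone, and one picks $l'_*$ deep enough inside $\mathcal{S}'_{\mathbb{R}}$ so that $l'_*+\mathcal{S}'_{\mathbb{R}}$ sits inside this translate; otherwise the argument is exactly the paper's.
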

The right--hand side of (\ref{eq:SUMQP2}) with opposite sign
is called the  {\it $r_h$--normalized Seiberg--Witten invariant} of $M$, and it is denoted by
$\mathfrak{sw}_h^{norm}(M)$.

\section{Ehrhart--Macdonald--Stanley duality for rational functions}\label{s:EMS}

\subsection{}
We wish to apply equivariant Ehrhart theory for the computation of the periodic
constant  of the topological Poincar\'e series.
For this we review some results from the Ehrhart theory. We will use a similar setup
as in section \ref{s:prliminaries} to make the comparison easier:
we will fix a free $\Z$--module $L$, another one  $L'\supset L$ of the same rank
(but in this Ehrhart context $L'$ is not
necessarily the dual of $L$, in fact $L$ carries no intersection form at all).
We write $H$ for $L'/L$, $d$ for the order of $H$,
and we fix a bases  $\{E_v\}_{v\in\calv}$ in $L$.

We consider multivariable rational functions (in variables $\bt^{L'}$) of type
\begin{equation}\label{eq:func}
z(\bt)=\frac{\sum_{k=1}^r\iota_k\bt^{b_k}}{\prod_{i=1}^n
(1-\bt^{a_i})},
\end{equation}
where  $\{\iota_k\}_{k=1}^r \in\mathbb{Z}$, $\{b_k\}_{k=1}^r, \ \{a_i\}_{i=1}^n\in L'$ and for any
$l'=\sum_vl'_vE_v\in L'$ we write $\bt^{l'}=t_1^{l'_1}\dots
t_{s}^{l'_{s}}$.
(Though $l'_v\in{\mathbb Q}$  we still call the elements of $\Z[L']$ `polynomials',
and fractions of `polynomials' `rational functions'.)
  We also assume that
all the coordinates $a_{i,v}$ of $a_i$  are strict positive.

We consider the Taylor expansion $Tz({\bt})$ of $z(\bt)$ at the origin
$$Tz(\bt)=\sum_{l'}z(l')\bt^{l'}\in \Z[[\bt^{1/d}]][\bt^{-1/d}]:=\Z[[t_1^{1/d},\ldots, t_s^{1/d}]]
[t_1^{-1/d},\ldots, t_s^{-1/d}],$$
and also define the Taylor expansion of $z(\bt)$ at infinity
$$T^{\infty}z(\bt)=\sum_{\tilde{l}}r(\tilde{l})\bt^{\tilde{l}}\in \Z[[\bt^{-1/d}]][\bt^{1/d}].$$
$T^\infty z$  is obtained by
the substitution  $\mathbf{s}=1/\bt$ into the Taylor expansion at $\mathbf{s}=0$ of the function $z(1/\mathbf{s})$.
E.g., if $z(t)=t^2/(1-t)$, then 
$T^\infty z(t)=-t(1+t^{-1}+t^{-2}+\cdots)$.

\subsection{} The series  $z$, $Tz$ and $T^{\infty}z$
 have equivariant decompositions $\sum_{h}z_h$, $\sum_h Tz_h$ and $\sum_h (T^{\infty}z)_h$ ($h\in H$)
 with respect to  $H=L'/L$. They
  are defined similarly as  the decomposition  of $Z$ in \ref{ss:PGP}.
   (Note that the operation $Tz\mapsto T^\infty z$, defined via the original $z$,
  preserves all the $h$--components.)

We also might eliminate some of the variables:
 for any subset
 $\cali\subset \calv$ we substitute $t_i=1$  in $z(\bt)$  for all $i\notin \mathcal{I}$;
 in this way we obtain $z(\bt_{\cali})$. We call this procedure `reduction'.
 This procedure at the level of $Tz$ is a summation of some of the coefficients.
 Since the series associated with the denominator of $z$
 is supported in the cone $\Z_{\geq 0}\langle a_i\rangle$, the summation is finite.

 Note that the $H$--decomposition of the restricted functions are not well--defined. That is,
 from the restricted function $z(\bt_{\mathcal{I}}):=z(\bt)|_{t_i=1,i\notin \mathcal{I}}$
 in general one cannot recover anymore the restriction of the $H$--component $z_h(\bt)$.
 (That is, from the exponent of $\bt_\cali^{l'}$ one cannot recover $[l']\in H$.
Here, and in the sequel,  for any $l'\in L'$ we write $\bt^{l'}_\cali$ for $\prod_{v\in \cali}t_v^{l'_v}$.)
Hence, in the presence of decomposition and reduction the only well--defined object is
$(z_h)|_{t_i=1,i\notin \mathcal{I}}$,  the reduction of $z_h$, which will be denoted by $z_h(\bt_\cali)$.
Furthermore, if $\pi_\cali$ is the natural projection associated with the reduction (elimination of the
$\cali$--components), then $\pi_{\mathcal{I}}(L')/\pi_{\mathcal{I}}(L)$ usually is not isomorphic to $H$.
Hence, we  keep $H$ as an index set and we never consider $\pi_{\mathcal{I}}(L')/\pi_{\mathcal{I}}(L)$.

\subsection{} We fix  $h\in H$ and $\cali\subset \calv$.
We define two functions associated with the coefficients of $Tz_h(\bt_{\mathcal{I}})$: the first is called the
{\em counting function} (cf. (\ref{eq:countintro}))
\begin{equation}\label{eq:countintro2}
Q_{h,\cali}: L'_{h}:=\{x\in L'\,:\, [x]=h\}\to \bZ, \ \ \ \ Q_{h,\cali}(x):=\sum_{l'|_\cali\ngeq x |_\cali,
\, [l']=[x]} z(l').
\end{equation}
The definition selects only the coefficients of $Tz_h(\bt_\cali)$, hence if we write
$Tz_h(\bt_\cali)$ as $\sum_{k'\in \pi_\cali L'} z^{(h)} (k')\bt_{\cali}^{k'}$, then
$Q_{h,\cali}(x)=\sum_{k'\not\geq x|_{\cali}}z^{(h)}(k')$.
It depends only on $x|_{\cali}$.
This truncation and counting function does not (naturally) appear in Ehrhart theory, but
this  is what is imposed by Theorem \ref{th:JEMS}.

Our second function is called the      {\em modified counting function}; it is  defined by
\begin{equation}\label{eq:modcountintro}
q_{h,\cali}: L'_{h}\to \bZ, \ \ \ \ q_{h,\cali} \, (x):=\sum_{l'|_\cali \prec\,   x|_\cali, \ [l']=[x]} \ z(l'),
\end{equation}
where for any $a,b\in \mathbb{R}^{|\cali|}$ we say that $a\prec b$ if $a_v<b_v$
for {\it all} $v\in \cali$.
Similarly as above, one also has $q_{h,\cali} \, (x)=\sum_{k'\prec  x|_{\cali}}z^{(h)}(k')$.
For any $h$ and $\cali$ by inclusion--exclusion principle
\begin{equation}\label{eq:InEx}
Q_{h,\cali}(x)=\sum_{\emptyset \not=\calj\subset \cali} \ (-1)^{|\calj|+1} q_{h,\calj}(x).\end{equation}
This modified counting function will be related to the usual counting functions from the Ehrhart theory. It satisfies several useful properties (e.g. convexity, reciprocity, see below).

\subsection{Ehrhart theory of polytopes associated with the denominator of $z$}\label{sec:denom}
In this subsection we follow the equivariant version from \cite{LN,LSz}.
Associated with
the vectors $\{a_i\}_{i=1}^n$ (the exponents in the denominator of $z(\bt)$)
we define two objects. Firstly,
let $\mathfrak{l}:\R^n\to L'\otimes\R$ be the map given by $\mathfrak{l}(\mathbf{x})=\sum_{i=1}^n x_ia_i$ and consider the representation $\rho:\Z^n\to H$ defined by the composition
$\Z^n\stackrel{\mathfrak{l}|_{\Z^{n}}}{\longrightarrow} L'\to L'/L$.

Then, the vectors $\{a_i\}_{i=1}^n$ and any
  $\cali\subset \calv, \cali\neq \emptyset$ (which might vary, cf. (\ref{eq:InEx}))
determines  the family of
closed  dilated convex polytopes indexed (or, dilated by) $l'=\sum_v l_v' E_v\in L'$
\begin{equation}\label{def:polytope}
\calP^{(l')}_\cali :=\{{\bf x}\in (\R_{\geq 0})^n\,:\,
\sum_ix_ia_{i,v} \leq  l_v' \ \ \mbox{for all $v\in \cali$} \}.
\end{equation}
$ \calP^{(l')}_\cali $  depends only on $l'_\cali:=l'|_{\cali}$.
We denote by $\calF^{(l')}_\cali$ the set of facets of $\calP^{(l')}_\cali$.

We assume that
$l'_\cali$ moves in some region of $\pi_\cali(L')$
in such a way that $\calP^{(l')}_\cali$ stays combinatorially stable. In such a case
we can associate to the stable combinatorial type of the dilated polytopes the set (dilated family)
 of facets $\calf^{(l')}_\cali$. Moreover, any choice of a subset of facets in a fixed  stable
 topological type provides  a `stable' (dilated)  subset of facets $ \calg^{(l')}_\cali$
in each $ \calf^{(l')}_\cali$; we denote this choice by  $\calg_\cali\subset \calf_\cali$.
Furthermore,
for any $h\in H$ and choice  $\calg_\cali\subset \calf_\cali$ we consider the
counting function of specially chosen lattice points identified  by
\begin{equation}\label{eq:QPol}
\mathcal{Q}_{h,\calg_\cali}(l')
:=\mbox{cardinality of}\ \big(\big(\calP^{(l')}_\cali \setminus \calg^{(l')}_\cali
\big)\cap \rho^{-1}(h)\big).\end{equation}
According to the  equivariant Ehrhart theory,
  applied to the dilated polytopes $\calP^{(l')}_\cali$,
and for any $h\in H$ and $\calg_\cali$,
the counting function   $\mathcal{Q}_{h,\calg_\cali}(l'_\cali)$
is a quasipolynomial,  whenever $l'_\cali$ moves in some region of $\pi_\cali(L')$
in such a way that $\calP^{(l')}_\cali$ stays combinatorially stable.

The inequalities of (\ref{def:polytope}) can be viewed as a vector partition
$\sum_{i=1}^n x_i\cdot a_i|_\cali+
\sum_{v\in \cali}y_v\cdot  E_v|_\cali=l'_\cali$ with $x_i, y_v\geq 0$. Hence, by \cite{SzV,LSz},
 the variance of the combinatorial
type of $\calP^{(l')}_\cali$ is determined by the following chamber decomposition of
$\pi_\cali(L'\otimes {\mathbb R})
={\mathbb R}^{|\cali|}$: let $\mathcal{B}_\cali$ be the set of all bases
$\sigma\subset \{a_i|_\cali, E_v|_\cali  :  i\in\{1,\dots,n\}, v\in\cali\}$ of ${\mathbb R}^{|\cali|}$.
Then a (big, open) chamber $\mathfrak{c}$ is a connected component of ${\mathbb R}^{|\cali|}\setminus \cup_{\sigma\in\mathcal{B}_\cali}\partial\R_{\geq 0}\sigma$, where $\partial\R_{\geq 0}\sigma$ is the boundary of the closed cone $\R_{\geq 0}\sigma$.

This construction shows  that the chamber decomposition is independent of the choices of
$h\in H$ and $\calg_\cali\subset \calf_\cali$.

Therefore, if $l'_\cali$ moves in such a chamber $\mathfrak{c}$, then the above counting function of
lattice points $\mathcal{Q}_{h,\calg_\cali}(l'_\cali)$ is a quasipolynomial, denoted by
$\mathcal{Q}_{h,\calg_\cali}^{\mathfrak{c}}$. One can  extend $\mathcal{Q}_{h,\calg_\cali}^{\mathfrak{c}}$
continuously to the closure  $\overline{\mathfrak{c}}$ of $\mathfrak{c}$ as a quasipolynomial;
all these
extensions glue together to a continuous piecewise quasipolynomial of ${\mathbb R}^{|\cali|}$.
This  piecewise  quasipolynomial is the expression ${\mathcal Q}_{h,\calg_\cali}$ from (\ref{eq:QPol}).
(This means that  ${\mathcal Q}_{h,\calg_\cali}(l'_\cali)$ in terms of
$\{\mathcal{Q}_{h,\calg_\cali}^{\mathfrak{c}}\}_{\mathfrak{c}}$ can be  redefined as follows:
for any  $l'_\cali \in {\mathbb R}^{|\cali|}$
  first find a chamber $\mathfrak{c}$ such that
$l'_\cali\in \overline{\mathfrak{c}}$ and then set
 ${\mathcal Q}_{h,\calg_\cali}(l'_\cali):=\mathcal{Q}_{h,\calg_\cali}^{\mathfrak{c}}(l'_\cali)$.)

By the   {\em equivariant Ehrhart--Macdonald--Stanley reciprocity law}, for any fixed $h$, $\calg_\cali\subset \calf_\cali$
and chamber $\mathfrak{c}$
with $l'_\cali\in \mathfrak{c}$ one has
\begin{equation}\label{eq:EMS}
\mathcal{Q}_{h,\calg_\cali}^{\mathfrak{c}}(l'_\cali) =(-1)^n\cdot
\mathcal{Q}_{-h,\calf_\cali\setminus \calg_\cali}^{\mathfrak{c}}(-l'_{\cali}).
\end{equation}
(We warn the reader that usually the parameter $-l'_\cali$ is included in a different chamber
than $\mathfrak{c}$, that is
$\mathcal{Q}_{-h,\calf_\cali\setminus \calg_\cali}^{\mathfrak{c}}(-l'_{\cali})\not=
{\mathcal Q}_{-h,\calf_\cali\setminus \calg_\cali}(-l'_{\cali})$;
in (\ref{eq:EMS}) in both sides we use the same chamber $\mathfrak{c}$, and from the pair
$l'_\cali$, $-l'_\cali$ only one of them can be in $\overline{\mathfrak{c}}$
provided that $l'_\cali\not=0$.)

\subsection{}
A distinguished subset of facets  $\calg_\cali\subset \calf_\cali$ is defined as  the
 coordinate facets $\{\calP^{(l')}_\cali\cap \{x_i=0\}\}_{i=1}^n$. We denote it by  $\calg_\cali^{co}$.

 Then the theory above has the following consequences regarding
 the modified counting function $q_{h,\cali}$ of $Tz_h (\bt_\cali)$ defined in (\ref{eq:modcountintro}):
\begin{corollary}\label{cor:NEW} We fix $h$ and $\cali$.

(a)  $q_{h,\cali}$ is a piecewise quasipolynomial, which can be written as
\begin{equation}
q_{h,\cali}(l')=\sum_k\iota_k\cdot\mathcal{Q}_{h-[b_k], \calf_\cali\setminus \calg^{co}_{\cali}}
(l'_\cali-b_k|_\cali).
\end{equation}
\ \ (b) For a fixed chamber $\mathfrak{c}$ of ${\mathbb R}^{|\cali|}$ define the quasipolynomial
\begin{equation}\label{eq:qbar}
q^{\mathfrak{c}}_{h,\cali}(l'):=
\sum_k\iota_k\cdot\mathcal{Q}^{\mathfrak{c}}_{h-[b_k],
 \calf_\cali\setminus \calg^{co}_{\cali}}(l'_\cali-b_k|_\cali).
\end{equation}
Then $q_{h,\cali}$ is a quasipolynomial on (the closure of)
  $\cap_k(b_k|_\cali+\mathfrak{c})$, namely, for  $l'_\cali\in
  \cap_k(b_k|_\cali+\mathfrak{c})$,
$q_{h,\cali}(l')=q^{\mathfrak{c}}_{h,\cali}(l')$.

(c) For any fixed chamber $\mathfrak{c}$
the modified counting function $q_{h,\cali}$ admits a quasipolynomial
in the sense of Subsection \ref{ss:perConst} ${\mathfrak{q}}^{\mathfrak{c}}_{h,\cali}$,
which satisfies for $l'=l+r_h$ the identity
${\mathfrak{q}}^{\mathfrak{c}}_{h,\cali}(l)=q^{\mathfrak{c}}_{h,\cali}(l')$,
  and a
periodic constant  $\mathrm{pc}^{\mathfrak{c}}(  q_{h,\cali})=
{\mathfrak{q}}^{\mathfrak{c}}_{h,\cali}(0)=q^{\mathfrak{c}}_{h,\cali}(r_h)$
associated with  the chamber $\mathfrak{c}$.
\end{corollary}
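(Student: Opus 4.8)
The plan is to deduce all three parts from the equivariant Ehrhart theory reviewed in Subsection \ref{sec:denom}, by tracking how the modified counting function $q_{h,\cali}$ of $Tz_h(\bt_\cali)$ relates to lattice point counts in the dilated polytopes $\calP^{(l')}_\cali$. First I would establish part (a). The key observation is that the Taylor expansion at the origin of the rational function $1/\prod_{i=1}^n(1-\bt^{a_i})$ has coefficient at a lattice point $k'$ equal to the number of ways of writing $k' = \sum_i x_i a_i$ with $x_i\in\Z_{\geq 0}$; restricting to the $\cali$-coordinates and imposing the strict inequality $l'|_\cali \prec x|_\cali$ coming from (\ref{eq:modcountintro}) turns the sum $q_{h,\cali}(l')$, for the bare denominator, into the count of lattice points of $\Z^n$ lying in the \emph{open} polytope obtained from $\calP^{(l')}_\cali$ by removing the \emph{non}-coordinate facets, intersected with $\rho^{-1}(h)$ — i.e.\ $\mathcal{Q}_{h,\calf_\cali\setminus\calg^{co}_\cali}(l'_\cali)$. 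Here I use that the facets of $\calP^{(l')}_\cali$ are exactly the coordinate facets $\{x_i=0\}$ together with the facets $\{\sum_i x_i a_{i,v}=l'_v\}$ for $v\in\cali$, and that $a\prec b$ strictly excludes precisely the latter. Then the numerator $\sum_k\iota_k\bt^{b_k}$ contributes a finite linear combination: multiplying by $\bt^{b_k}$ shifts the exponent lattice and shifts the homology class, so the coefficient bookkeeping gives $q_{h,\cali}(l') = \sum_k \iota_k\,\mathcal{Q}_{h-[b_k],\calf_\cali\setminus\calg^{co}_\cali}(l'_\cali - b_k|_\cali)$, which is the claimed formula; finiteness of the sum over shifts is guaranteed by (\ref{eq:finite})-type support arguments (the series sits in the cone generated by the $a_i$).

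Part (b) is then essentially formal. By the discussion in Subsection \ref{sec:denom}, each $\mathcal{Q}_{h-[b_k],\calf_\cali\setminus\calg^{co}_\cali}$ is piecewise quasipolynomial with respect to the chamber decomposition $\{\mathfrak{c}\}$, and it agrees with the genuine quasipolynomial $\mathcal{Q}^{\mathfrak{c}}_{h-[b_k],\calf_\cali\setminus\calg^{co}_\cali}$ on the closure $\overline{\mathfrak{c}}$. The argument of the $k$-th term is $l'_\cali - b_k|_\cali$, so that term is quasipolynomial precisely when $l'_\cali - b_k|_\cali \in \overline{\mathfrak{c}}$, i.e.\ $l'_\cali\in b_k|_\cali + \overline{\mathfrak{c}}$; intersecting over all $k$ gives the region $\cap_k(b_k|_\cali+\mathfrak{c})$ on (the closure of) which \emph{every} term, hence the whole sum, is quasipolynomial and equals $q^{\mathfrak{c}}_{h,\cali}(l')$ as defined in (\ref{eq:qbar}). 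One should check that this intersection is nonempty and top-dimensional — it is a translate-intersection of finitely many copies of the open chamber $\mathfrak{c}$, and since the $b_k|_\cali$ are fixed vectors, the intersection contains a translate of $\mathfrak{c}$ far enough in the direction of $\mathfrak{c}$, so it is a nonempty open cone-like region with top-dimensional affine closure.

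Part (c) is the translation into the periodic-constant language of Subsection \ref{ss:perConst}. Having (b), the function $q_{h,\cali}$ agrees with a single quasipolynomial on a region $\mathcal{K} := \cap_k(b_k|_\cali + \mathfrak{c})$ whose affine closure is top-dimensional; setting $\mathcal{K}$ to play the role of the cone in (\ref{eq:PCDEF}), writing $l' = l + r_h$ with $l\in L$, and restricting $q^{\mathfrak{c}}_{h,\cali}(l')$ to the coset $L + r_h$ produces a quasipolynomial $\mathfrak{q}^{\mathfrak{c}}_{h,\cali}(l)$ in $l$ with $\mathfrak{q}^{\mathfrak{c}}_{h,\cali}(l) = q^{\mathfrak{c}}_{h,\cali}(l+r_h)$; evaluating at $l=0$ gives the periodic constant $\mathrm{pc}^{\mathfrak{c}}(q_{h,\cali}) = \mathfrak{q}^{\mathfrak{c}}_{h,\cali}(0) = q^{\mathfrak{c}}_{h,\cali}(r_h)$, exactly as asserted. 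The only subtlety to spell out is that the quasipolynomial $q^{\mathfrak{c}}_{h,\cali}$, a priori defined by Ehrhart theory on a sublattice $\widetilde{L}\subset L$ of finite index (the Ehrhart period), still satisfies the periodic-constant axioms on $L + r_h$ — but this is exactly the independence of $\mathrm{pc}$ from the choice of sublattice noted after (\ref{eq:PCDEF}).

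I expect the main obstacle to be part (a): precisely identifying which facets must be \emph{deleted} so that the strict inequalities $l'|_\cali\prec x|_\cali$ in the definition of $q_{h,\cali}$ match the half-open polytope convention in (\ref{eq:QPol}), and simultaneously keeping track of the homology-class shift $h\mapsto h-[b_k]$ and the lattice shift $l'_\cali\mapsto l'_\cali - b_k|_\cali$ under multiplication by the numerator monomials $\iota_k\bt^{b_k}$. Once the dictionary between open/half-open polytopes and the $\prec$ versus $\ngeq$ truncations is pinned down — and here the inclusion–exclusion identity (\ref{eq:InEx}) already signals that $q_{h,\cali}$ (strict in all $\cali$-directions) is the "clean" object, with $Q_{h,\cali}$ reconstructed from it — parts (b) and (c) are bookkeeping over the chamber structure.
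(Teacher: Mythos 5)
Your proposal is correct and is essentially the reading the paper intends: the paper states Corollary \ref{cor:NEW} without proof, presenting it as an immediate consequence of the Ehrhart-theoretic setup in Subsection \ref{sec:denom}, and your derivation is exactly the filling-in of that setup. In part (a) the identification of $q_{h,\cali}$ with $\sum_k \iota_k\,\mathcal{Q}_{h-[b_k],\calf_\cali\setminus\calg^{co}_\cali}(l'_\cali - b_k|_\cali)$ via the partition-function interpretation of $1/\prod_i(1-\bt^{a_i})$, the translation of the strict $\prec$-truncation into deletion of the non-coordinate facets, and the shift of both the dilation parameter and the $H$-class by the numerator monomials $\iota_k\bt^{b_k}$ all match the paper's setup (equations (\ref{def:polytope}), (\ref{eq:QPol}), (\ref{eq:modcountintro})); parts (b) and (c) are, as you say, chamber bookkeeping and the translation into the $\mathrm{pc}$-language of Subsection \ref{ss:perConst}.
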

This $\mathrm{pc}^{\mathfrak{c}}(  q_{h,\cali})$ will be denoted by
$\mathrm{mpc}^{\mathfrak{c}}(Tz_h (\bt_\cali))$.
We call it the
{\em modified periodic constant} of $Tz_h (\bt_\cali)$ associated with $h$, $\cali$
and the chamber $\mathfrak{c}$.
(The terminology and the notation emphasize the presence of  different cuts in the counting functions.)

\subsection{}
In general, the computation of quasipolynomials and their  periodic constants  (either modified or not)
is hard: it measures the asymptotic
behaviour of the coefficients in a certain cone.
The next result based on the
equivariant Ehrhart--Macdonald--Stanley reciprocity (\ref{eq:EMS}) shows that (under some condition)
the modified periodic constant equals with a
{\em finite sum given by the coefficients of the Taylor expansion at infinity}.
\begin{thm}\label{thm:mpc} Fix $h$ and $\cali$ as above. Assume that there exists a chamber $\mathfrak{c}$
such that
$b_k|_\cali\in\mathfrak{c}$ for all $k$.
 Write the $h$--component of the Taylor expansion at infinity as
$(T^{\infty}z)_h(\bt_\cali)=
\sum_{\tilde{l}}r^{(h)}_{\cali}(\tilde{l})\bt_\cali^{\tilde{l}}$.
Then
\begin{equation}\label{eq:mpc}
\mathrm{mpc}^{\mathfrak{c}}(Tz_h (\bt_\cali))=\sum_{\tilde{l}\geq 0}r^{(h)}_{\cali}(\tilde{l}).
\end{equation}
\end{thm}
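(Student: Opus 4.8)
The plan is to reduce the statement to the Ehrhart--Macdonald--Stanley reciprocity (\ref{eq:EMS}) applied to the coordinate facets $\calg^{co}_\cali$, using Corollary \ref{cor:NEW} to pass between the modified counting function $q_{h,\cali}$ and the lattice-point counting functions $\mathcal{Q}_{h,\calg_\cali}$. First I would unwind the definition of $\mathrm{mpc}^{\mathfrak c}(Tz_h(\bt_\cali))$: by Corollary \ref{cor:NEW}(c) it equals $q^{\mathfrak c}_{h,\cali}(r_h)$, and by part (b), since the hypothesis guarantees $b_k|_\cali\in\mathfrak c$ for all $k$, the point $r_h$ (suitably small, or more precisely $0$ in the $l$-variable, which lies in the closure of $\cap_k(b_k|_\cali+\mathfrak c)$ after a harmless translation argument) lands in the region where $q_{h,\cali}$ agrees with the single quasipolynomial $q^{\mathfrak c}_{h,\cali}$. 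Thus $\mathrm{mpc}^{\mathfrak c}(Tz_h(\bt_\cali))=\sum_k\iota_k\cdot\mathcal{Q}^{\mathfrak c}_{h-[b_k],\,\calf_\cali\setminus\calg^{co}_\cali}(r_h-b_k|_\cali)$.

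The key step is then to apply the reciprocity (\ref{eq:EMS}) to each term, with $\calg_\cali=\calg^{co}_\cali$, so that $\calf_\cali\setminus\calg^{co}_\cali$ on the left becomes $\calg^{co}_\cali$ on the right, and the dilation parameter flips sign:
\begin{equation*}
\mathcal{Q}^{\mathfrak c}_{h-[b_k],\,\calf_\cali\setminus\calg^{co}_\cali}(r_h-b_k|_\cali)=(-1)^n\,\mathcal{Q}^{\mathfrak c}_{[b_k]-h,\,\calg^{co}_\cali}(b_k|_\cali-r_h).
\end{equation*}
Here the right-hand side counts lattice points in $\rho^{-1}([b_k]-h)$ lying in $\calP^{(b_k|_\cali-r_h)}_\cali$ but \emph{not} on any coordinate facet $\{x_i=0\}$, i.e.\ lattice points with all coordinates $x_i>0$ satisfying $\sum_i x_i a_{i,v}\le (b_k-r_h)_v$ for $v\in\cali$. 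The point of taking $-l'_\cali$ small (which forces us into a chamber where most dilated polytopes are empty or low-dimensional, consistent with the warning after (\ref{eq:EMS})) is that such a lattice point with strictly positive coordinates contributes to $\mathcal{Q}^{\mathfrak c}_{[b_k]-h,\calg^{co}_\cali}$ precisely when it corresponds to a monomial appearing in the expansion of $z(\bt)$ at infinity — because expanding $1/(1-\bt^{a_i})$ at infinity produces $-\bt^{-a_i}(1+\bt^{-a_i}+\cdots)$, contributing a sign $(-1)^n$ and shifting the exponent by $-\sum a_i$, which exactly matches the strict-positivity constraint on the $x_i$ and the sign $(-1)^n$ above.

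So the remaining work is bookkeeping: identify $\sum_k\iota_k\,(-1)^n\,\mathcal{Q}^{\mathfrak c}_{[b_k]-h,\calg^{co}_\cali}(b_k|_\cali-r_h)$ with $\sum_{\tilde l\ge 0} r^{(h)}_\cali(\tilde l)$, where $r^{(h)}_\cali(\tilde l)$ are the coefficients of $(T^\infty z)_h(\bt_\cali)$. Concretely, each lattice point $\mathbf x\in\Z_{>0}^n$ with $\mathfrak l(\mathbf x)\equiv b_k-h$ and $\mathfrak l(\mathbf x)|_\cali\le (b_k-r_h)|_\cali$ corresponds, via $\tilde l=b_k-\mathfrak l(\mathbf x)$, to a monomial $\bt^{\tilde l}$ of $T^\infty z_h$ with $\tilde l\ge 0$ (the positivity of the $x_i$ combined with positivity of the $a_i$ forcing $\tilde l|_\cali\le b_k|_\cali$, and the reduction/restriction to $\cali$ together with (\ref{eq:finite})-type finiteness making the sum over $\tilde l\ge 0$ finite), and the coefficient $\iota_k$ together with the sign $(-1)^n$ from expanding at infinity reproduces $r^{(h)}_\cali(\tilde l)$ after summing over $k$ and over $\mathbf x$. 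I expect the main obstacle to be the careful matching of chambers and of the parameter $r_h$ versus $0$: one must check that $b_k|_\cali-r_h$ (for all $k$ simultaneously) lies in the closure of the chamber $\mathfrak c$, so that the quasipolynomial $\mathcal{Q}^{\mathfrak c}$ genuinely computes the honest lattice-point count $\mathcal{Q}$ there, and that the finitely many monomials $\bt^{\tilde l}$ with $\tilde l\ge 0$ in $(T^\infty z)_h(\bt_\cali)$ are exactly those captured — this is where the hypothesis $b_k|_\cali\in\mathfrak c$ for all $k$ is used essentially, and where the $\cali$-reduction (which turns the $H$-decomposition ambiguous but keeps $z_h(\bt_\cali)$ well defined, as emphasized earlier) must be handled with care.
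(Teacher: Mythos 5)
Your outline follows the same overall strategy as the paper -- Corollary~\ref{cor:NEW} to reduce $\mathrm{mpc}^{\mathfrak c}$ to the quasipolynomials $\mathcal{Q}^{\mathfrak c}_{h-[b_k],\,\calf_\cali\setminus\calg^{co}_\cali}$, then the reciprocity~(\ref{eq:EMS}), then identification with $T^\infty z$ -- but there is a genuine gap at the reciprocity step.

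You apply~(\ref{eq:EMS}) with dilation parameter $l'_\cali = b_k|_\cali - r_h|_\cali$, which requires that this shifted point lie in $\mathfrak c$ (or at least its closure). Near the end you assert that this follows ``essentially'' from the hypothesis $b_k|_\cali\in\mathfrak c$, but it does not: $\mathfrak c$ is an \emph{open} chamber, and subtracting the small nonnegative vector $r_h|_\cali$ can push $b_k|_\cali$ across a wall. Nothing in the hypothesis prevents this. The paper sidesteps this by first proving a separate claim, namely
$\mathcal{Q}^{\mathfrak c}_{h-[b_k],\,\calf_\cali\setminus\calg^{co}_\cali}\bigl((r_h-b_k)|_\cali\bigr) = \mathcal{Q}^{\mathfrak c}_{h-[b_k],\,\calf_\cali\setminus\calg^{co}_\cali}(-b_k|_\cali)$,
using integrality: for $a\in L'$ with $[a]=h$, writing $a=r_h+l$ with $l\in L$, the coordinates of $r_h$ lie in $[0,1)$, so $a_v<(r_h)_v$ forces $l_v\le-1$, hence $a_v<0$; therefore the two strict truncation conditions pick out the same set of lattice points. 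Only \emph{after} this replacement does the paper invoke~(\ref{eq:EMS}), at the clean parameter $b_k|_\cali\in\mathfrak c$, which is exactly what the hypothesis supplies. That integrality replacement is the essential content you have deferred as a ``detail to check''; without it (or an argument that $b_k|_\cali-r_h|_\cali\in\overline{\mathfrak c}$, which is not granted), the reciprocity application is unjustified. The rest of your bookkeeping -- reading the coordinate facets $\calg^{co}_\cali$ as the strict positivity $x_i>0$, and matching against the expansion $\bt^{b_k}/\prod_i(1-\bt^{a_i})\mapsto(-1)^n\bt^{b_k}\sum_{x_i>0}\bt^{-\sum_i x_i a_i}$ at infinity -- is correct and agrees with the paper.
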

\begin{proof} By Corollary \ref{cor:NEW} the function $q_{h,\cali}$
 on the set $\cap_k (b_k|_\cali+\mathfrak{c})$
is
$\sum_k\iota_k\cdot\mathcal{Q}^{\mathfrak{c}}_{h-[b_k],
 \calf_\cali\setminus \calg^{co}_{\cali}}(l'_\cali-b_k|_\cali)$.
Hence, by definitions,
$\mathrm{mpc}^{\mathfrak{c}}(Tz_h (\bt_\cali))=
{\mathrm pc}^{\mathfrak{c}}(q_{h,\cali})$ exists and equals
$$\sum_k\iota_k\cdot\mathcal{Q}^{\mathfrak{c}}_{h-[b_k],
 \calf_\cali\setminus \calg^{co}_{\cali}}((r_h-b_k)|_\cali).$$
 First we claim that
$$ \mathcal{Q}^{\mathfrak{c}}_{h-[b_k],
 \calf_\cali\setminus \calg^{co}_{\cali}}((r_h-b_k)|_\cali)=
 \mathcal{Q}^{\mathfrak{c}}_{h-[b_k],
 \calf_\cali\setminus \calg^{co}_{\cali}}(-b_k|_\cali).$$
 Indeed, if $[\sum _ix_ia_i]=h-[b_k]$ and
 $(\sum_i x_ia_i)_v< (r_h-b_k)_v$  for all $v\in\cali$ (where the strictness of the
 inequality is imposed by the boundary condition  $\calf_\cali\setminus \calg^{co}_{\cali}$)
 then necessarily $(\sum_i x_ia_i)_v< (-b_k)_v$ also holds for $v\in\cali$. That is, if
 $[a]=h$ and $a_v<(r_h)_v$ then $a_v<0$. To see this write $a=r_h+l$ with $l\in L$,
 then $l_v<0$ hence $l_v\leq -1$, and
 $a_v\leq (r_h)_v-1<0$.

 On the other hand,  since $b_k\in \mathfrak{c}$, from (\ref{eq:EMS}) one has
$$ (-1)^n\cdot \mathcal{Q}^{\mathfrak{c}}_{-h+[b_k],\calg^{co}_{\cali}}(b_k|_\cali)=
 \mathcal{Q}^{\mathfrak{c}}_{h-[b_k],
 \calf_\cali\setminus \calg^{co}_{\cali}}(-b_k|_\cali).$$
 The expression $\mathcal{Q}^{\mathfrak{c}}_{-h+[b_k],\calg^{co}_{\cali}}(b_k|_\cali)$
 counts solutions  of $\sum_ix_ia_{i,v}\leq b_{k,v}$ for all $v\in\cali$ under the restrictions
 $[b_k-\sum_ix_ia_i]=h$ and  $x_i>0$ for all $i$. On the other hand,
 in the expansion at infinity,
 $$(-1)^n\cdot \frac{\bt^{b_k}}{\prod_i(1-\bt^{a_i})}\stackrel{T^\infty}{\longrightarrow}
 \bt^{b_k} \cdot \sum_{x_i>0}\bt^{- \sum_ix_ia_i}.$$
 Hence
 $$\sum_k\iota_k\cdot(-1)^n\cdot \mathcal{Q}^{\mathfrak{c}}_{-h+[b_k],\calg^{co}_{\cali}}(b_k|_\cali)
 =\sum_{\tilde{l}\geq 0}r^{(h)}_{\cali}(\tilde{l}).$$
\end{proof}

\section{Duality for multivariable  Poincar\'e series}\label{s:4}

\subsection{} In this section, we will return  to the situation of section \ref{s:prliminaries}:
we consider the rational function $f(\bt)$ from (\ref{eq:1.1})
associated with a connected negative definite plumbing graph $\Gamma$, its expansion at the origin
$Z(\bt)$, its expansion at infinity $T^\infty f(\bt)$,  and their equivariant versions and reductions.
We also use  all the notations from section \ref{s:EMS} regarding the counting and modified counting functions
associated with $f$ and $Z$, $\mathcal{S}_{\mathbb R}'$
is the real Lipman cone, and for any $\cali$ we consider the projection
  $\pi_{\cali}:\mathbb{R}\langle E_v\rangle_{v\in\mathcal{V}}
  \to \mathbb{R}\langle E_v\rangle_{v\in\cali}={\mathbb R}^{|\cali|}$, denoted also as
 $x\mapsto x|_{\cali}$.
The  {\it projected (real) Lipman cone} is the cone in ${\mathbb R}^{|\cali|}$  defined as
$\pi_{\cali}(S'_{\mathbb{R}})$.

We also review  a reduction  analogue of Theorem \ref{th:NJEMSThm},
which is the prototype of several results showing that for certain
choices of $\cali$, the reduction $Z(\bt_\cali)$ still preserves the  `Seiberg--Witten information' (for more see \cite{LNN}).
\begin{thm}\label{th:RedThm} \ \cite{LN}
The counting function
 of $Z_h(\bt_{\caln})$ in the cone $\pi_{\caln}(S'_{\mathbb{R}})$
admits  a quasipolynomial and a periodic constant,   and
$$\mathrm{pc}^{\pi_{\mathcal{N}}(S'_{\mathbb{R}})}(Z_h(\mathbf{t}_{\mathcal{N}}))=\mathrm{pc}^{S'_{\mathbb{R}}}(Z_h(\mathbf{t}))=
-\mathfrak{sw}_{-h*\sigma_{can}}(M)-\frac{(K+2r_h)^2+|\mathcal{V}|}{8}.$$
\end{thm}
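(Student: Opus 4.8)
The plan is to show that reducing the variables to the node set $\caln$ does not lose the Seiberg--Witten information by comparing the counting function $Q_h(Z(\bt))$ of the full series with the counting function $Q_{h,\caln}(Z(\bt_\caln))$ of the reduced series, restricted to an appropriate sub-cone. First I would recall from Section \ref{s:EMS} that for any $\cali\subset\calv$ the counting function $Q_{h,\cali}$ depends only on $x|_\cali$, and that by the inclusion--exclusion formula (\ref{eq:InEx}) it is a piecewise quasipolynomial expressed through the modified counting functions $q_{h,\calj}$ for $\emptyset\neq\calj\subset\cali$; each of these admits a quasipolynomial in a fixed chamber by Corollary \ref{cor:NEW}(c). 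The key structural input is that the factors $(1-\bt^{E^*_v})^{\delta_v-2}$ with $\delta_v=2$ contribute nothing, and those with $\delta_v=1$ (end-vertices) sit in the \emph{numerator}; thus, after reduction to $\caln$, the denominator of $f(\bt_\caln)$ involves only the vectors $E^*_v|_\caln$ for nodes $v$, and the combinatorics of the associated polytopes $\calP^{(l')}_\caln$ is governed by the chamber decomposition of $\pi_\caln(L'\otimes\R)$ described in Subsection \ref{sec:denom}.

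The heart of the argument is to verify that the projected Lipman cone $\pi_\caln(\mathcal S'_\R)$ lies inside (the closure of) a single chamber $\mathfrak c$ of this decomposition, and that the finitely many shift vectors $b_k|_\caln$ coming from the numerator of $f(\bt_\caln)$ all lie in $\mathfrak c$ as well; this is exactly the hypothesis needed to invoke Corollary \ref{cor:NEW}(b)--(c) and Theorem \ref{thm:mpc}. Granting this, the reduced counting function $Q_{h,\caln}$ admits a quasipolynomial on $\pi_\caln(\mathcal S'_\R)$ and hence a periodic constant $\mathrm{pc}^{\pi_\caln(\mathcal S'_\R)}(Z_h(\bt_\caln))$. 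To identify it with $\mathrm{pc}^{\mathcal S'_\R}(Z_h(\bt))$, I would argue that reduction is a finite summation of coefficients (the support of $Z$ lies in $\mathcal S'$, and by (\ref{eq:finite}) only finitely many lattice points of $\mathcal S'$ fail to dominate a given $x$), so for $x|_\caln$ deep enough inside $\pi_\caln(\mathcal S'_\R)$ the truncated sum $\sum_{l'|_\caln\ngeq x|_\caln}z(l')$ over \emph{all} $v\in\calv$ agrees with $\sum_{l'\ngeq x}z(l')$ once $x$ is chosen in $\mathcal S'_\R$ with the same $\caln$-coordinates and sufficiently large coordinates on $\calv\setminus\caln$; comparing the two quasipolynomials at such points and evaluating the common quasipolynomial at $0$ gives the equality of periodic constants. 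Then Theorem \ref{th:NJEMSThm}, specifically (\ref{eq:SUMQP2}), supplies the closed formula $-\mathfrak{sw}_{-h*\sigma_{can}}(M)-\frac{(K+2r_h)^2+|\calv|}{8}$.

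The main obstacle I anticipate is the combinatorial-stability / single-chamber claim: one must show that moving $l'_\caln$ within $\pi_\caln(\mathcal S'_\R)$ keeps $\calP^{(l')}_\caln$ combinatorially stable and that all numerator exponents $b_k|_\caln$ land in the interior (or at least the closure) of that chamber, so that Theorem \ref{thm:mpc} applies without a chamber mismatch. This requires a careful analysis of the vectors $\{E^*_v|_\caln,\,E_v|_\caln\}$ and the signs $(E^*_v,\,\cdot\,)$; the negative definiteness of the intersection form (which forces all entries of every $E^*_v$ to be strictly positive) and the explicit form (\ref{ZK}) of $Z_K-E$ as a nonnegative combination of the $E^*_v$ are the facts I would lean on to pin down the relevant chamber. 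Once the chamber bookkeeping is settled, the remaining steps — finiteness of the reduction sum, agreement of truncations, and substitution into (\ref{eq:SUMQP2}) — are routine.
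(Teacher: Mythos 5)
The paper itself does not prove Theorem \ref{th:RedThm}; it is quoted verbatim from \cite{LN}, so there is no internal argument to compare against — only the citation. Evaluating your blind proposal on its own terms: the preliminary structure is sound. You correctly note that reduction to $\caln$ leaves only node exponents $E^*_v|_\caln$ in the denominator of $f(\bt_\caln)$ (cf.\ (\ref{eq:fI})), that Proposition \ref{Lm-6} places $\pi_\caln(\calS'_\R)$ and the numerator shifts inside a single chamber $\mathfrak{c}$, and that Corollary \ref{cor:NEW} together with (\ref{eq:InEx}) then yields a quasipolynomial and a periodic constant for $Q_{h,\caln}$ on that chamber. That part is fine and matches the general strategy available from Section \ref{s:EMS}.

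The gap is in the identification $\mathrm{pc}^{\pi_\caln(\calS'_\R)}(Z_h(\bt_\caln)) = \mathrm{pc}^{\calS'_\R}(Z_h(\bt))$, which is the actual content of the theorem. Your argument claims that for a suitable $x\in\calS'_\R$ with $x|_\caln$ fixed and ``sufficiently large coordinates on $\calv\setminus\caln$'' the sums $Q_{h,\caln}(x)$ and $Q_h(x)$ coincide. This fails on two counts. First, the parameter cannot be chosen: since every $E^*_v$ has all entries strictly positive, any $x=\sum_v b_v E^*_v\in\calS'_\R$ has all coefficients $b_v$ bounded once $x|_\caln$ is bounded, so you cannot fix $x|_\caln$ and push the remaining coordinates to infinity while staying in $\calS'_\R$ — and the sign of the adjustment is in fact reversed: the difference set $\{l'\in\calS' : l'|_\caln \geq x|_\caln,\ l'\not\geq x\}$ empties out when the complementary coordinates are \emph{small} (a boundary face of $\calS'_\R$), not large. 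Second, and more seriously, even with the corrected choice the resulting identity $Q_{h,\caln}(x|_\caln)=Q_h(\tilde x)$ only shows the reduced quasipolynomial equals the full quasipolynomial pulled back along a section $\tilde x$ of $\pi_\caln$ landing on that boundary face; the periodic constants are the respective quasipolynomials evaluated at $0$ (equivalently at $r_h$), and since $\tilde x(r_h|_\caln)\neq r_h$ in general, equality of the pullback does not give equality of constant terms. So the step ``comparing the two quasipolynomials at such points and evaluating the common quasipolynomial at $0$'' is where the argument breaks down; the proof in \cite{LN} resolves precisely this mismatch by a more careful analysis of the Ehrhart quasipolynomial rather than a pointwise coincidence of counting functions.
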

Such a  result has the following  advantages:
the number of reduced variables (i.e., in this case, the number of nodes) usually is
considerably less than the number of vertices, a fact which reduces the complexity of the calculations.
Moreover, the reduced series reflects more conceptually
the complexity of the manifold $M$ (using  only one variable for each Seifert 3-manifold piece in its JSJ--decomposition).  Furthermore,
the reduced series can be compared/linked with other (geometrically
or analytically defined)
 objects as well (see e.g. \cite{BN,NPS}).

The above theorem has several generalizations, however
the number of variables cannot be decreased arbitrarily,
it is obstructed by the  normalized Seiberg--Witten invariants of
the complementary subgraph $\Gamma(\calv\setminus \cali)$, for details see
\cite{LNN}.

\subsection{} As we already mentioned, the {\bf modified counting function} has some
additional nice properties (compared with the original counting function).
Regarding it,  in the sequel we will use several results
 from \cite{LSz} (where  $q_{h,\cali}$ associated with $f$ is  called the `coefficient function', since
 $q_{h,\cali} \, (l'_0)$ is the coefficient of $\mathbf{t}^{l'_0}_{\cali}$ in the Taylor expansion of $f_h(\mathbf{t}_{\cali})\cdot\prod_{v\in\cali}
 \mathbf{t}^{E_v}_{\cali}/(1-\mathbf{t}^{E_v}_{\cali})$).

\subsection{`Chamber property'}
If we wish to apply Theorem \ref{thm:mpc}  for $q_{h,\cali}$, we need to find
a chamber associated with the denominator of the rational function $f(\bt_\cali)$, which contains all
vectors of type $b_k|_\cali$ where $b_k$ are the exponents appearing in the numerator.
The  next proposition  shows
 the existence of a chamber which contains the whole
projected real Lipman cone. In order to give some intuition for this fact, we also
provide an intermediate step of its proof.

For a  subset $\cali\subset \calv$, $\cali\not=\emptyset$ we define its
closure $\overline{\cali}$ as the
set of vertices of that {\it connected} minimal full subgraph $\Gamma_{\overline{\cali}}$ of $\Gamma$,
 which contains $\cali$.
We denote by $\delta_{v,\overline{\cali}}$ the valency of a vertex $v\in \overline{\cali}$ in the graph $\Gamma_{\overline{\cali}}$.

In \cite[Lemma 11]{LSz} is proved  that $f(\bt_\cali)$ has  a product decomposition of type
\begin{equation}\label{eq:fI}
f(\mathbf{t}_\cali) = R(\mathbf{t}_{\cali} ) \cdot \prod_{v\in \overline{\cali}}
 \Big( 1 -\mathbf{t}_{\cali}^{E^{*}_{v}} \Big)^{\delta_{v,\overline{\cali}}-2},
\end{equation}
where $R$ is a polynomial supported on $\pi_\cali(\calS')$, in particular it has no pole.
 Hence, the possible poles of $f$ via the $\cali$--reduction are drastically reduced from
the set of poles of  $\{1-\bt^{E^*_v}\}_{v\in \cale}$  to the set of poles of $\{1 -\mathbf{t}_{\cali}^{E^{*}_{v}}\}_{v\in \cale_{\overline{\cali}}}$.
 Here $\cale_{\overline{\cali}}$ is the set of end--vertices of $\Gamma_{\overline{\cali}}$;
 note that $\cale_{\overline{\cali}}\subset \cali$.
 Therefore, by the construction of the chamber decomposition of ${\mathbb R}^{|\cali|}$  (cf. \ref{sec:denom})
 the chambers associated with $f(\bt_\cali)$ are determined by the bases selected from
$ \{E^{*}_{v}|_\cali,E_u|_\cali :  v\in \mathcal{E}(\Gamma_{\overline{\mathcal{I}}}), u\in \mathcal{I}\}$.
These and a lattice--combinatorial argument provide
\begin{proposition}\label{Lm-6} \ \cite{LSz}
The interior of the projected  Lipman cone $\textnormal{int}\,  (\pi_{\mathcal{I}}(\mathcal{S}'_{\mathbb{R}}))$
 is contained entirely in a (big) chamber $\mathfrak{c}$ of $f(\bt_\cali)$.
\end{proposition}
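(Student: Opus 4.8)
\noindent\emph{Proof strategy.}
The plan is to prove that no wall of the chamber decomposition of ${\mathbb R}^{|\cali|}$ attached to $f(\bt_\cali)$ meets the open cone $\textnormal{int}\,(\pi_{\cali}(\mathcal{S}'_{\mathbb{R}}))$; since this cone is convex and full--dimensional, hence connected, it then lies in a single big chamber. By (\ref{eq:fI}) and the discussion preceding the statement, the union of all walls is contained in the union of the hyperplanes $H_\tau=\langle\tau\rangle_{\mathbb R}$, where $\tau$ runs over the linearly independent $(|\cali|-1)$--element subsets of $\{E^*_v|_\cali\}_{v\in\cale_{\overline{\cali}}}\cup\{E_u|_\cali\}_{u\in\cali}$ (note the latter set contains the standard basis $\{E_u|_\cali\}_{u\in\cali}$, so every such $\tau$ extends to a basis). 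Hence it suffices to check that each such $H_\tau$ avoids $\textnormal{int}\,(\pi_{\cali}(\mathcal{S}'_{\mathbb{R}}))$.

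Write $\tau=\{E_u|_\cali:u\in\mathcal{I}_0\}\cup\{E^*_v|_\cali:v\in\mathcal{D}\}$ and set $\mathcal{I}_1:=\cali\setminus\mathcal{I}_0$, so $|\mathcal{I}_1|=|\mathcal{D}|+1$ and, by independence of $\tau$, $\{E^*_v|_{\mathcal{I}_1}\}_{v\in\mathcal{D}}$ is independent. First I would use the coordinate vectors $E_u|_\cali$ ($u\in\mathcal{I}_0$) to replace, inside $\tau$, each $E^*_v|_\cali$ by its truncation to the $\mathcal{I}_1$--coordinates without changing $H_\tau$; this exhibits $H_\tau$ as ${\mathbb R}^{\mathcal{I}_0}\oplus H'$ with $H':=\langle E^*_v|_{\mathcal{I}_1}:v\in\mathcal{D}\rangle_{\mathbb R}\subset{\mathbb R}^{\mathcal{I}_1}$. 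Since the coordinate projection onto the $\mathcal{I}_1$--coordinates is an open map carrying $\pi_\cali(\mathcal{S}'_{\mathbb{R}})$ onto $\pi_{\mathcal{I}_1}(\mathcal{S}'_{\mathbb{R}})$ and $H_\tau$ onto $H'$, it is enough to show $H'\cap\textnormal{int}\,(\pi_{\mathcal{I}_1}(\mathcal{S}'_{\mathbb{R}}))=\emptyset$. (That $\pi_\cali(\mathcal{S}'_{\mathbb{R}})$ lies in the nonnegative orthant, all coordinates of the $E^*_v$ being positive, makes the coordinate--hyperplane walls obvious; the computation below handles everything uniformly.) Now take $\ell$ to be the linear functional on ${\mathbb R}^{\mathcal{I}_1}$ given by the determinant whose rows are $E^*_v|_{\mathcal{I}_1}$ ($v\in\mathcal{D}$, in a fixed order) followed by the argument, with columns indexed by $\mathcal{I}_1$; then $H'=\ker\ell$, and it suffices to prove that $\ell(E^*_w|_{\mathcal{I}_1})$ has the same sign for all $w\in\mathcal{V}$ and is nonzero for at least one $w$, for then $\ell$ keeps a strict sign on $\textnormal{int}\,(\pi_{\mathcal{I}_1}(\mathcal{S}'_{\mathbb{R}}))$.

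To compute these values, let $(m_{ab})_{a,b\in\mathcal{V}}$ be the inverse of the positive definite matrix $\big(-(E_a,E_b)\big)_{a,b}$, so that $E^*_v=\sum_w m_{vw}E_w$ and $m_{ab}>0$ for all $a,b$. For $w\in\mathcal{D}$ one has $\ell(E^*_w|_{\mathcal{I}_1})=0$ (repeated row), and for $w\notin\mathcal{D}$, $\ell(E^*_w|_{\mathcal{I}_1})=\det(m_{ab})_{a\in\mathcal{D}\cup\{w\},\,b\in\mathcal{I}_1}$ ($w$ placed last). Putting $\mathcal{C}:=\mathcal{D}\cap\mathcal{I}_1$ and forming the Schur complement of the positive definite principal block $(m_{ab})_{\mathcal{C}\mathcal{C}}$, this minor becomes — up to a sign independent of $w$ — the positive number $\det(m_{ab})_{\mathcal{C}\mathcal{C}}$ times the minor of $\widehat m:=(m_{ab})/(m_{ab})_{\mathcal{C}\mathcal{C}}$ with rows $(\mathcal{D}\setminus\mathcal{I}_1)\cup\{w\}$ and columns $\mathcal{I}_1\setminus\mathcal{D}$. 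A standard identity identifies $\widehat m$ with the inverse of $\big(-(E_a,E_b)\big)_{a,b\in\mathcal{V}\setminus\mathcal{C}}$, i.e. of the negative intersection matrix of the induced subforest $\Gamma(\mathcal{V}\setminus\mathcal{C})$; being the inverse of an $M$--matrix, $\widehat m$ has nonnegative entries and positive diagonal. When $\mathcal{D}\subseteq\mathcal{I}_1$ — in particular when $\mathcal{I}_0=\emptyset$, and when $\mathcal{D}=\emptyset$ — we have $\mathcal{D}\setminus\mathcal{I}_1=\emptyset$ and $\mathcal{I}_1\setminus\mathcal{D}=\{u\}$ consists of a single vertex, so the outstanding minor is just the entry $\widehat m_{wu}\geq0$; hence $\ell(E^*_w|_{\mathcal{I}_1})$ has a sign independent of $w$, and it is $>0$ at $w=u$ because $\widehat m_{uu}>0$. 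This disposes of all such walls.

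The remaining case is $\mathcal{D}\not\subseteq\mathcal{I}_1$, which can occur only after the coordinate elimination has pushed a leaf of $\Gamma_{\overline{\cali}}$ out of the current index set. Here the outstanding factor is a minor of the inverse $M$--matrix $\widehat m$ of the forest $\Gamma(\mathcal{V}\setminus\mathcal{C})$ with row set $(\mathcal{D}\setminus\mathcal{I}_1)\cup\{w\}$ and column set $\mathcal{I}_1\setminus\mathcal{D}$, and I would show it keeps its sign as $w$ ranges over $\mathcal{V}$ by induction on $|\mathcal{V}|$: peel off one end--vertex of $\Gamma(\mathcal{V}\setminus\mathcal{C})$ at a time by a further Schur complement, using that the elements of $\mathcal{D}$ are leaves of $\Gamma_{\overline{\cali}}$ and the multiplicativity of $m_{\bullet\bullet}$ along tree paths ($m_{ij}m_{kk}=m_{ik}m_{kj}$ when $k$ lies on the path joining $i$ and $j$), which factors such minors. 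I expect this last piece of combinatorial bookkeeping — controlling how $\cale_{\overline{\cali}}$ and the pair $(\mathcal{D},\mathcal{I}_1)$ transform under the successive Schur complements so that the inductive hypothesis keeps applying — to be the main difficulty; everything else is the convexity reduction of the first paragraph together with routine linear algebra.
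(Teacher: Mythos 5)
The paper itself gives no proof of Proposition \ref{Lm-6}: it identifies the relevant generating set $\{E^{*}_{v}|_\cali,E_u|_\cali : v\in \mathcal{E}(\Gamma_{\overline{\mathcal{I}}}), u\in \mathcal{I}\}$ via (\ref{eq:fI}) and then defers the ``lattice--combinatorial argument'' to \cite{LSz}. So there is no internal argument to compare against, and the proposal must be judged on its own terms.

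Your overall strategy is reasonable and well organized: reduce to showing each wall hyperplane $H_\tau$ misses $\textnormal{int}\,(\pi_{\cali}(\mathcal{S}'_{\mathbb{R}}))$, use the coordinate vectors in $\tau$ to project down to the $\mathcal{I}_1$--coordinates, rewrite the separating functional as a minor of $m$, and factor through the Schur complement by $\mathcal{C}=\mathcal{D}\cap\mathcal{I}_1$, identifying $\widehat m$ with the inverse of the Stieltjes matrix of $\Gamma(\calv\setminus\mathcal{C})$. The case $\mathcal{D}\subseteq\mathcal{I}_1$ is indeed clean: the remaining factor is a single entry of an inverse $M$--matrix, so the sign claim is immediate.

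However, the case $\mathcal{D}\not\subseteq\mathcal{I}_1$ is left as a sketch, and you flag it yourself as ``the main difficulty.'' This is a genuine gap, not just a polish issue. What still has to be proved there is a nontrivial sign--coherence statement for a family of non-principal, non-entry minors of $\widehat m$ over a forest, with row set $(\mathcal{D}\setminus\mathcal{I}_1)\cup\{w\}$ and column set $\mathcal{I}_1\setminus\mathcal{D}$, uniformly in $w$. The difficulty is real: the vertices of $\mathcal{D}\setminus\mathcal{I}_1$ were end--vertices of $\Gamma_{\overline{\cali}}$, not of $\Gamma$ or of $\Gamma(\calv\setminus\mathcal{C})$, so after passing to $\widehat m$ they need not be leaves of the ambient forest, and the tree--path multiplicativity $m_{ij}m_{kk}=m_{ik}m_{kj}$ does not obviously factor the minor in a way compatible with the next Schur complement. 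You would need a precise inductive statement in which the triple $(\Gamma', \mathcal{D}', \mathcal{I}_1')$ is replaced after each peel, with a lemma keeping track of which vertices remain end--vertices of the relevant closure, and an argument that the sign coming from each peel is independent of $w$. None of that bookkeeping is present, so the proposal does not yet constitute a proof of the proposition. (Small checks that ought to be spelled out but are not the sticking point: that $\tau$ of size $|\cali|-1$ that extends to a basis is exactly what governs the walls, that $\pi_\cali(\mathcal{S}'_{\mathbb{R}})$ is full--dimensional so the projection argument is legitimate, and that the permutation signs in the Schur reduction do not depend on $w$.)
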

\begin{remark} In \cite{LSz,LNN} is also proved the following `convexity property':
If  $l_0'\in Z_K-E+\calS'$ and  $[l'_0]=h$
then \ $q_{h,\cali} (l'_0)=q_{h,\overline{\cali}} (l'_0)$.
\end{remark}

\subsection{Duality for counting functions and periodic constants}
Now we are ready to prove our main theorem:
a duality/pairing between  special evaluations of the counting functions associated with $f$
and the periodic constants.
The duality is the upshot of {\em two `symmetries'},
manifested at two different levels. The
first one is the {\em equivariant Ehrhart--Macdonald--Stanley reciprocity
of the polytopes},
while the second is a topological imprint of the   Gorenstein duality
present at the level of the topological Poincar\'e series: a
$\{x\leftrightarrow Z_K-x\}$ symmetry.

\begin{theorem}\label{Qqduality} \ Fix  any
$\cali\subset \calv$, $\cali\not=\emptyset$. Then \\
\hspace*{2cm}
 (a) $\mathrm{mpc}^{\pi_{\mathcal{I}}(\mathcal{S}'_{\mathbb{R}})}
 (Z_h(\mathbf{t}_{\mathcal{I}}))=
 q_{[Z_K]-h,\mathcal{I}}(Z_K-r_h)$;\\
 \hspace*{2cm} (b) $\mathrm{pc}^{\pi_{\mathcal{I}}(\mathcal{S}'_{\mathbb{R}})}
 (Z_h(\mathbf{t}_{\mathcal{I}}))=
 Q_{[Z_K]-h,\mathcal{I}}(Z_K-r_h)$.

 \noindent
 That is, (in principle, the  hardly computable) periodic constant of a series $Z_h$
 can be determined as a precise finite coefficient counting of the dual series $Z_{[Z_K]-h}$.
\end{theorem}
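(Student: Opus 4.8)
The strategy is to combine Theorem \ref{thm:mpc} (which expresses the modified periodic constant as a finite sum of Taylor-at-infinity coefficients) with a ``$Z_K$-symmetry'' of the zeta function $f(\bt)$, and then descend from the modified counting functions $q$ to the actual counting functions $Q$ via the inclusion--exclusion identity (\ref{eq:InEx}).

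\emph{Step 1: chamber admissibility.} First I would record that $f(\bt_\cali)$ has the product form (\ref{eq:fI}), so its numerator $R(\bt_\cali)$ is supported on $\pi_\cali(\calS')$; hence every exponent $b_k|_\cali$ appearing in the numerator lies in $\pi_\cali(\calS'_{\mathbb R})$. By Proposition \ref{Lm-6}, $\textnormal{int}(\pi_\cali(\calS'_{\mathbb R}))$ lies inside a single big chamber $\mathfrak c$, so the hypothesis ``$b_k|_\cali\in\mathfrak c$ for all $k$'' of Theorem \ref{thm:mpc} is satisfied (the vectors on the boundary can be handled by the continuous extension of the quasipolynomials to $\overline{\mathfrak c}$). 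Consequently $\mathrm{mpc}^{\pi_\cali(\calS'_{\mathbb R})}(Z_h(\bt_\cali)) = \sum_{\tilde l\geq 0} r^{(h)}_\cali(\tilde l)$, the sum of the nonnegative-exponent coefficients of $(T^\infty f)_h(\bt_\cali)$.

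\emph{Step 2: the $Z_K$-symmetry.} The key algebraic input is the functional equation for $f$ coming from (\ref{eq:1.1}): writing $f(\bt)=\prod_v(1-\bt^{E_v^*})^{\delta_v-2}$, one checks
\begin{equation*}
f(1/\bt) = \prod_v (1-\bt^{-E_v^*})^{\delta_v-2} = \bt^{-\sum_v(\delta_v-2)E_v^*}\cdot\prod_v(-1)^{\delta_v-2}(1-\bt^{E_v^*})^{\delta_v-2},
\end{equation*}
and $\sum_v(\delta_v-2)E_v^* = Z_K-E$ by (\ref{ZK}) while $\sum_v(\delta_v-2)=-|\calv|+\sum_v(\delta_v-1)$ has a fixed parity (for a tree $\sum_v\delta_v = 2|\calv|-2$, so $\sum_v(\delta_v-2)=-2$, giving sign $+1$). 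Thus $f(1/\bt)=\bt^{E-Z_K}f(\bt)$, which at the level of Taylor expansions reads $T^\infty f(\bt) = \bt^{E-Z_K}\cdot Z(\bt)$ — hence $r(\tilde l) = z(\tilde l + Z_K - E)$, and this respects the $H$-grading: $(T^\infty f)_h$ corresponds to $Z_{[Z_K]-h}$ after the shift by $Z_K-E$. Therefore $\sum_{\tilde l\geq 0}r^{(h)}_\cali(\tilde l) = \sum_{l'|_\cali \geq (E-Z_K)|_\cali,\ [l']=[Z_K]-h} z(l') = \sum_{l'|_\cali \succ (Z_K-r_h-E)|_\cali\ \text{or similar}} z(l')$; matching this against the definition (\ref{eq:modcountintro}) of $q_{[Z_K]-h,\cali}$ at the point $Z_K-r_h$ (using $l'|_\cali\prec(Z_K-r_h)|_\cali$ $\Leftrightarrow$ $\tilde l = l'-(Z_K-E)$ has $\tilde l|_\cali\geq 0$, since entries are integers relative to the appropriate class) gives precisely $q_{[Z_K]-h,\cali}(Z_K-r_h)$. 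This proves (a).

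\emph{Step 3: from $q$ to $Q$.} For part (b), I apply part (a) not to $\cali$ alone but to every nonempty $\calj\subset\cali$: since $\textnormal{int}(\pi_\calj(\calS'_{\mathbb R}))$ also sits in a single chamber for each $\calj$ (Proposition \ref{Lm-6} applies to any subset), we get $\mathrm{mpc}^{\pi_\calj(\calS'_{\mathbb R})}(Z_h(\bt_\calj)) = q_{[Z_K]-h,\calj}(Z_K-r_h)$ for all $\emptyset\neq\calj\subset\cali$. Now the periodic constant $\mathrm{pc}^{\pi_\cali(\calS'_{\mathbb R})}(Z_h(\bt_\cali))$ is computed from the counting function $Q_{h,\cali}$, which by (\ref{eq:InEx}) equals $\sum_{\emptyset\neq\calj\subset\cali}(-1)^{|\calj|+1}q_{h,\calj}$; passing to periodic constants term by term (legitimate because each $q_{h,\calj}$ admits a quasipolynomial on the relevant cone by Corollary \ref{cor:NEW}, and the periodic-constant operation is linear), $\mathrm{pc}^{\pi_\cali(\calS'_{\mathbb R})}(Z_h(\bt_\cali)) = \sum_{\emptyset\neq\calj\subset\cali}(-1)^{|\calj|+1}\mathrm{mpc}^{\pi_\calj(\calS'_{\mathbb R})}(Z_h(\bt_\calj)) = \sum_{\emptyset\neq\calj\subset\cali}(-1)^{|\calj|+1}q_{[Z_K]-h,\calj}(Z_K-r_h)$. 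By (\ref{eq:InEx}) again, the right side is exactly $Q_{[Z_K]-h,\cali}(Z_K-r_h)$, proving (b).

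\emph{Main obstacle.} The delicate point is Step 2: making the shift-by-$(Z_K-E)$ bookkeeping precise, namely verifying that the strict inequalities ``$\prec$'' in the definition of $q$ match the non-strict ``$\geq 0$'' condition on the exponents $\tilde l$ of the expansion at infinity, after accounting for the representative $r_h$ and the fact that all relevant lattice coordinates differ by integers within a fixed $H$-class. This is exactly the ``$a_v < (r_h)_v \Rightarrow a_v < 0$'' type argument already used in the proof of Theorem \ref{thm:mpc}, and the claim $q_{[Z_K]-h,\cali}((r_h)$-shifted point$)$ is independent of the ``upper'' coordinates needs the same boundedness of support of $Z$ in the Lipman cone (\ref{eq:finite}). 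One also must double-check the sign $(-1)^n$ in the Ehrhart--Macdonald--Stanley law (\ref{eq:EMS}) cancels correctly against the parity of $\sum_v(\delta_v-2)$ and the number of factors in the denominator of $f(\bt_\cali)$; for a tree these parities conspire to give an overall $+1$, but this bookkeeping is the part most prone to error.
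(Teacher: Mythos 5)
Your overall strategy exactly mirrors the paper's: use (\ref{eq:fI}) and Proposition \ref{Lm-6} to verify the chamber hypothesis of Theorem \ref{thm:mpc}, apply the $Z_K$--functional equation to identify the finite at-infinity sum with $q_{[Z_K]-h,\cali}(Z_K-r_h)$, and then pass from (a) to (b) by inclusion--exclusion (\ref{eq:InEx}). Step 3 is spelled out in more detail than the paper's one-line reduction, but it is the same idea. So the approach is the same.

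There is, however, a sign slip in Step 2 that as written makes the crucial inequality point the wrong way. From $f(\bt)=\bt^{Z_K-E}f(\bt^{-1})$ one gets $f(1/\bs)=\bs^{E-Z_K}f(\bs)$, hence $T_0[f(1/\bs)]=\bs^{E-Z_K}Z(\bs)$, and substituting $\bs=1/\bt$ gives
\[
T^{\infty}f(\bt)=\bt^{Z_K-E}\,Z(\bt^{-1})=\sum_{l'\in\calS'} z(l')\,\bt^{\,Z_K-E-l'},
\]
so $r(\tilde l)=z(Z_K-E-\tilde l)$, \emph{not} $r(\tilde l)=z(\tilde l+Z_K-E)$ as you wrote (your intermediate identity $T^\infty f(\bt)=\bt^{E-Z_K}Z(\bt)$ is not correct). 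With the correct substitution, $\tilde l\geq 0$ corresponds to $l'|_\cali\leq (Z_K-E)|_\cali$, which is an \emph{upper} bound on $l'$; this is exactly what matches the strict cut $l'|_\cali\prec (Z_K-r_h)|_\cali$ in the definition (\ref{eq:modcountintro}) of $q_{[Z_K]-h,\cali}$, via the observation that for $[l']=[Z_K]-h$ one has $l'_v\leq (Z_K-E)_v$ if and only if $l'_v<(Z_K-r_h)_v$. In your version, $\tilde l\geq 0$ turns into the lower bound $l'\geq Z_K-E$, which cannot match the $\prec$ in $q$. You already flagged this bookkeeping as the fragile point in your ``Main obstacle''; the fix is just the orientation of the monomial shift, after which your argument goes through exactly as in the paper.
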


\begin{proof}
By the inclusion--exclusion principle (\ref{eq:InEx}) $(b)$ is implied by  $(a)$.
Next we prove $(a)$.


The substitution $x\mapsto Z_K-x$ in  $f$, together with the
identities $Z_K-E=\sum_{v\in\mathcal{V}}(\delta_v-2)E^*_v$ from (\ref{ZK})
and $-2=\sum_{n\in \mathcal{V}}( \delta_n-2)$ (since $\Gamma$ is a tree)
 gives
\begin{equation}\label{eq:ZKsym}
f(\mathbf{t}_{\mathcal{I}})=\mathbf{t}_{\mathcal{I}}^{Z_K-E} \cdot f(\mathbf{t}_{\mathcal{I}}^{-1}).
\end{equation}

This on the Taylor expansion level transforms into the symmetry
 $T^{\infty}f(\bt_\cali)=\bt_{\cali}^{Z_K-E}  Z(\bt_\cali^{-1})=\sum_{l'\in \mathcal{S}'}z(l')\mathbf{t}_{\mathcal{I}}^{Z_K-E-l'}$.
The corresponding $h$-equivariant parts are
\begin{equation}\label{Zhdual}
(T^{\infty}f)_h(\bt_\cali)=\sum_{l'\in\mathcal{S}', [l']=[Z_K]-h} z(l')\mathbf{t}_{\mathcal{I}}^{Z_K-E-l'}.
\end{equation}

Furthermore,
(\ref{eq:fI}) and the sentence after it
shows that in the numerator of $f(\mathbf{t}_{\mathcal{I}})$ all the exponents are situated in the projected Lipman cone $\pi_{\mathcal{I}}(\mathcal{S}'_{\mathbb{R}})$. In particular, by Lemma \ref{Lm-6},
they are contained in a fixed chamber $\mathfrak{c}\subset {\mathbb R}^{|\cali|}$
which contains $\pi_{\mathcal{I}}(\mathcal{S}'_{\mathbb{R}})$.
Therefore Theorem \ref{thm:mpc} gives
$$\mathrm{mpc}^{\pi_{\mathcal{I}}(\mathcal{S}'_{\mathbb{R}})}
(Z_h(\mathbf{t}_{\mathcal{I}}))=\sum_{l'|_{\mathcal{I}}\leq (Z_K-E)|_{\mathcal{I}}, [l']=[Z_K]-h} z(l').$$
But the right--hand side
 is exactly the counting function $q_{[Z_K]-h,\mathcal{I}}(Z_K-r_h)$,
  since  $l'|_{\mathcal{I}}\leq (Z_K-E)|_{\mathcal{I}}$ is equivalent with $l'|_{\mathcal{I}}\prec (Z_K-r_h)|_{\mathcal{I}}$ if $[l']=[Z_K]-h$.
\end{proof}

\begin{corollary}\label{dualcor}  
$Q_{[Z_K]-h,\mathcal{N}}(Z_K-r_h)=-\mathfrak{sw}_h^{norm}(M)$; that is, the Seiberg--Witten invariant
can be expressed via the counting function as a finite sum of $Z$--coefficients.
\end{corollary}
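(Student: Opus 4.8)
The plan is to apply Theorem \ref{Qqduality}(b) with the specific choice $\cali=\caln$, the set of nodes, and then to recognize both sides of the resulting identity in terms of already-established invariants. First I would invoke Theorem \ref{th:RedThm}, which asserts that the reduction to the node variables preserves the periodic constant: $\mathrm{pc}^{\pi_{\caln}(\calS'_{\mathbb R})}(Z_h(\mathbf t_{\caln}))=\mathrm{pc}^{\calS'_{\mathbb R}}(Z_h(\mathbf t))$. Combining this with Theorem \ref{Qqduality}(b) (valid for any nonempty $\cali$, in particular for $\cali=\caln$) yields
\begin{equation*}
Q_{[Z_K]-h,\caln}(Z_K-r_h)=\mathrm{pc}^{\pi_{\caln}(\calS'_{\mathbb R})}(Z_h(\mathbf t_{\caln}))=\mathrm{pc}^{\calS'_{\mathbb R}}(Z_h(\mathbf t)).
\end{equation*}

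Next I would substitute the explicit value of the periodic constant $\mathrm{pc}^{\calS'_{\mathbb R}}(Z_h(\mathbf t))$ furnished by Theorem \ref{th:NJEMSThm}, formula (\ref{eq:SUMQP2}):
\begin{equation*}
\mathrm{pc}^{\calS'_{\mathbb R}}(Z_h(\mathbf t))=-\mathfrak{sw}_{-h*\sigma_{can}}(M)-\frac{(K+2r_h)^2+|\calv|}{8}.
\end{equation*}
The right-hand side, with opposite sign, is by definition precisely the $r_h$-normalized Seiberg--Witten invariant $\mathfrak{sw}_h^{norm}(M)$ introduced in the remark following Theorem \ref{th:NJEMSThm}. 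Hence the chain of equalities collapses to $Q_{[Z_K]-h,\caln}(Z_K-r_h)=-\mathfrak{sw}_h^{norm}(M)$, which is the assertion.

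There is really no serious obstacle here: the corollary is a direct concatenation of Theorem \ref{Qqduality}(b), Theorem \ref{th:RedThm} and the definition of $\mathfrak{sw}_h^{norm}(M)$. The only point requiring a moment's care is bookkeeping of the indexing/sign conventions — checking that the $h$ appearing in Theorem \ref{Qqduality}(b) and in Theorem \ref{th:RedThm} is the same $h$ (it is, since both theorems are stated for a fixed $h\in H$ and involve $Z_h$, the $H$-component indexed by $h$), and that the normalization sign in $\mathfrak{sw}_h^{norm}(M)=\mathfrak{sw}_{-h*\sigma_{can}}(M)+\frac{(K+2r_h)^2+|\calv|}{8}$ is applied consistently. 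Once those conventions are aligned, the identity follows immediately, and the interpretive remark — that the (a priori asymptotically-defined) Seiberg--Witten invariant is now computed by a single explicit finite sum of Taylor coefficients $z(l')$ of the dual series $Z_{[Z_K]-h}$, namely those with $l'|_{\caln}\ngeq (Z_K-r_h)|_{\caln}$ — is precisely the content one wishes to record.
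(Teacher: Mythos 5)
Your proof is correct and follows exactly the intended argument: the paper states the corollary without a separate proof, treating it as the immediate concatenation of Theorem \ref{Qqduality}(b) (specialized to $\cali=\caln$), the reduction identity from Theorem \ref{th:RedThm}, and the explicit value of $\mathrm{pc}^{\calS'_{\mathbb R}}(Z_h(\mathbf t))$ in (\ref{eq:SUMQP2}) together with the definition of $\mathfrak{sw}_h^{norm}(M)$. Your bookkeeping of the signs and of the index $h$ is consistent with the paper's conventions, so nothing further is needed.
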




\section{The `polynomial part' of the series $Z(\bt)$}\label{s:PP}

\subsection{\bf The `Polynomial--negative degree part' decomposition: motivation and history}\label{ss:PPmothist} Consider a one--variable
rational function $z(t)=B(t)/A(t)$
with $A(t)=\prod_{i=1}^n(1-t^{a_i})$ and $a_i>0$.
In \cite[7.0.2]{BN} is observed  that
any  such function
has a unique decomposition of the form $z(t)=P^+(t)+z^{neg}(t)$, where
$P^+(t)$ is a polynomial (with non-negative exponents) and $z^{neg}(t)$ is
a rational function of negative degree. Furthermore, $z(t)$ admits a periodic constant (associated with the Taylor expansion of $z$ and the cone $\mathbb{R}_{\geq0}$), which equals $P^+(1)$.
The decomposition can be established using the Euclidean division algorithm.
 $P^+$ is called the \emph{polynomial part} while the rational function $z^{neg}$ is the \emph{negative degree part} of the decomposition.

This one--variable
  periodic constant computation was useful in theorems of type
\ref{th:RedThm} with only one node (Seifert 3--manifolds), or surgery formulas
along one node, cf. \cite{BN}. Basically, in these cases, the concrete  computation of the periodic constant was based on the computation of $P^+$.

Though  the multivariable generalization was hardly needed, till now
the complete answer was not found. The main questions were: (a) what are the universal properties of the parts $P^+$ and $z^{neg}$, which guarantee
that a decomposition $z=P^++z^{neg} $ exists and it is unique; and,
(b) what algorithm provides this decomposition.
Additionally, the wished decomposition must satisfy (at least)
the next basic property: (c)
${\rm pc}(z)=P^+(1)$, where ${\rm pc}$ is associated with
 the Lipman cone (or, at least, with some subcone of it). (Hence,
 when all these are satisfied, whenever the Seiberg--Witten invariant is computable via such
periodic constant, e.g. when $\cali$ contains all the nodes, cf. Theorem \ref{th:RedThm}, or, the `bad' vertices of $\Gamma$,
cf. \cite{LNN},
then $P^+$ is a polynomial generalization of the Seiberg--Witten invariant.)

For functions with two variables
in \cite{LN}   a decomposition
was established satisfying all the required properties, based on a
`two--variable division procedure'.
Furthermore, for more variables, \cite{LSz} constructed  a candidate polynomial $P_1^+$
based on the combinatorics of $\Gamma$, Ehrhart theory and reduction to the
one- and two--variable divisions. It satisfied (c), but it didn't answer
(a) in a natural way.  Later,  \cite{LSzdiv}
 considered another  polynomial $P_2^+$ (with
 $P^+_2\not=P^+_1$ in general) constructed via an inductive
multivariable Euclidean division. It answered  (a)-(b), but
(c) was not established, so it was not clear if $P^+_2$  is helpful at all
in ${\rm pc}$ (or  Seiberg--Witten) computations.  The authors of
 \cite[4.4]{LSzdiv} not quite conjectured, but asked convincingly whether this $P_2^+$ is the right candidate for the polynomial part.
Below we show that this is indeed the case.

\subsection{\bf The polynomial part by division} \cite{LSzdiv}
We review in short
the needed statement of \cite{LSzdiv}  reorganized from the
point of view of the present note.
We start, similarly as in section \ref{s:EMS}, with a pair of free $\Z$--modules  $L\subset L'$, and a general
multivariable rational function (in variables $\bt^{L'}$)
\begin{equation*}
z(\bt)=\frac{\sum_{k=1}^r\iota_k\bt^{b_k}}{\prod_{i=1}^n
(1-\bt^{a_i})},
\end{equation*}
where  $\{\iota_k\}_{k=1}^r \in\mathbb{Z}$, $\{b_k\}_{k=1}^r, \ \{a_i\}_{i=1}^n\in L'$ such that $b_k\nprec 0$ for all $k$ and $0\prec a_i$
for all $i$. (In our application $z$ will be $f_h(\bt_\cali)$ for some $\cali\not=\emptyset$.)
\begin{proposition}\label{prop:LSzDIV} \ \cite{LSzdiv}\\
(a) $z(\bt)$ can be written in the following form by a `multivariable Euclidean division algorithm':
\begin{equation}\label{eq:EuAl}
z(\bt)=\sum_{S\subset \{1,\ldots, n\}}\
\frac{ \sum_j\, \iota_{S,j}\bt^{b_{S,j}}}
{\prod_{i\in S}(1-\bt^{a_i})},
\end{equation}
where  $\iota_{S,j}\in {\mathbb Z}$, $b_{S,j}\nprec 0$ for all $(S,j)$,
and $b_{S,j}\prec a_i$ for all $(S,j)$ and all $i\in S$ whenever $\calS\not=\emptyset$.

\noindent (b) \
$z(\bt)$ has a decomposition of type
$P^+(\bt)+ z^{neg}(\bt)$ with the next properties:

(i) \ $P^+(\bt)$ is a finite sum (polynomial)
$\sum_jn_j \bt^{c_j}$ with $c_j\nprec 0$ for all $j$;

(ii) \ $z^{neg}(\bt)$ is a rational function
 with negative degree in all variables $t_i$.

Furthermore, a decomposition of $z(\bt)$ with properties (i)-(ii) is unique.

\noindent (c) In fact, the terms  $P^+$ and $z^{neg}$ of the decomposition from (b)
are given via (\ref{eq:EuAl}) as follows:
 $P^+$ (resp. $z^{neg}$) is the sum of terms from (\ref{eq:EuAl})
 over $S=\emptyset$ (resp. $S\not=\emptyset$).
\end{proposition}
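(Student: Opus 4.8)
The plan is to prove the three parts in the order (a), then (c) together with existence in (b), and finally uniqueness in (b). For part (a), I would run an explicit \emph{multivariable Euclidean division algorithm}: starting from $z(\bt)=\big(\sum_k\iota_k\bt^{b_k}\big)/\prod_{i=1}^n(1-\bt^{a_i})$, process the variables one at a time. For the first variable, whenever a numerator monomial $\iota\bt^{b}$ has $b$ not strictly below $a_1$ in the first coordinate, i.e. $b_{1}\ge a_{1,1}$, use the identity $\bt^{b}/(1-\bt^{a_1})=\bt^{b-a_1}/(1-\bt^{a_1})+\bt^{b-a_1}$, which strictly decreases the first exponent while spawning a new term with denominator missing the factor $(1-\bt^{a_1})$. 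Since the first exponent is bounded below (the numerator monomials satisfy $b_k\nprec 0$, and each division step only decreases it by $a_{1,1}>0$ until the remaining ``bad'' coordinate is controlled), this terminates. Iterating over all $n$ variables, and at each stage recursively subdividing the spawned lower-denominator terms, produces the claimed sum \eqref{eq:EuAl} over subsets $S\subset\{1,\dots,n\}$: the surviving denominator factors are exactly those indexed by $S$, every numerator monomial still satisfies $b_{S,j}\nprec 0$ (one checks this invariant is preserved: $b-a_i\nprec 0$ need not hold coordinatewise, but the algorithm is set up so the remainder terms retain $\nprec 0$ — this bookkeeping is the technical heart of the step), and the termination condition forces $b_{S,j}\prec a_i$ for every $i\in S$ when $S\neq\emptyset$. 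I would cite \cite{LSzdiv} for the detailed verification of the invariants.

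For part (c) and the existence half of (b): given the output \eqref{eq:EuAl}, define $P^+(\bt)$ as the $S=\emptyset$ summand, which is literally a polynomial $\sum_j n_j\bt^{c_j}$ with $c_j\nprec 0$, giving (i); and define $z^{neg}(\bt)$ as the sum of the $S\neq\emptyset$ summands. For each such summand $\big(\sum_j\iota_{S,j}\bt^{b_{S,j}}\big)/\prod_{i\in S}(1-\bt^{a_i})$, the degree in the variable $t_i$ for $i\in S$: the numerator degree in $t_i$ is $\max_j (b_{S,j})_i < (a_i)_i$ by the termination condition, while the denominator contributes degree $(a_i)_i$, so the whole fraction has strictly negative degree in $t_i$; for $i\notin S$ the fraction is a polynomial, but — here one must be slightly careful — the \emph{sum} $z^{neg}$ over all nonempty $S$ still has negative degree in every variable because every term with $(1-\bt^{a_i})$ absent from its denominator had that factor cancelled precisely by a division step in the $t_i$-direction, lowering its $t_i$-degree. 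I would phrase this as: for each $i$, group the terms of \eqref{eq:EuAl} with $i\in S$ versus $i\notin S$ and check the $t_i$-degree bound holds after recombination, again following \cite{LSzdiv}. This yields (ii).

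For uniqueness in (b): suppose $z=P^+_1+z^{neg}_1=P^+_2+z^{neg}_2$ are two decompositions with properties (i)--(ii). Then $P^+_1-P^+_2=z^{neg}_2-z^{neg}_1$. The left side is a polynomial with all exponents $\nprec 0$; the right side is a rational function with negative degree in every variable $t_i$. A nonzero polynomial $\sum_j n_j\bt^{c_j}$ with $c_j\nprec 0$ has, in at least one variable, a nonnegative top degree, contradicting negative degree in that variable unless it is the zero polynomial — one must formalize ``negative degree in all variables'' for a Laurent-type rational function by clearing denominators and comparing leading terms variable by variable. Hence $P^+_1=P^+_2$ and $z^{neg}_1=z^{neg}_2$.

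\textbf{Main obstacle.} The routine-looking but genuinely delicate point is the invariant bookkeeping in parts (a) and (c): showing that the division algorithm keeps \emph{all} numerator exponents $\nprec 0$ while simultaneously driving them $\prec a_i$ for $i\in S$, and then that the recombined $z^{neg}$ truly has strictly negative degree in \emph{every} variable (not just in the ``active'' variables of each individual summand). This is where the precise form of the one-step identity and the order in which variables are processed matter; I would lean on \cite{LSzdiv} for the full induction rather than reproduce it, and in this note only indicate the mechanism and verify the consequences we actually use.
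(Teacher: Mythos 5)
Your overall architecture matches the paper's: run a division algorithm to produce the decomposition \eqref{eq:EuAl}, read off $P^+$ and $z^{neg}$ from the $S=\emptyset$ and $S\neq\emptyset$ parts, and prove uniqueness by the degree contradiction. The uniqueness argument is correct and is essentially the paper's. But the core of part (a) has concrete problems.

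\emph{Sign error in the one-step identity.} You write $\bt^{b}/(1-\bt^{a_1})=\bt^{b-a_1}/(1-\bt^{a_1})+\bt^{b-a_1}$; clearing denominators gives $\bt^b=2\bt^{b-a_1}-\bt^b$, which is false. The correct step, used in the paper, is
$$\frac{\bt^{b}}{\prod_{i\in S}(1-\bt^{a_i})}=-\frac{\bt^{b-a_{i_0}}}{\prod_{i\in S\setminus\{i_0\}}(1-\bt^{a_i})}+\frac{\bt^{b-a_{i_0}}}{\prod_{i\in S}(1-\bt^{a_i})},$$
with the spawned term carrying a minus sign.

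\emph{Single-coordinate versus full $\nprec$ test.} You trigger the division when $b_1\geq a_{1,1}$, processing coordinate by coordinate. But \eqref{eq:EuAl} demands $b_{S,j}\prec a_i$ for all $i\in S$ in \emph{every} coordinate; driving only the $i$th coordinate of $b$ below $a_{i,i}$ does not give this. The paper's step is triggered by the full condition $b\nprec a_{i_0}$ (some coordinate of $b$ is $\geq$ the corresponding coordinate of $a_{i_0}$), and iterating until no such $i_0\in S$ exists yields precisely $b\prec a_i$ for all $i\in S$. The invariant you defer to \cite{LSzdiv} is in fact a one-liner: $b\nprec a_{i_0}$ means $b_w\geq(a_{i_0})_w$ for some $w$, so $(b-a_{i_0})_w\geq 0$, hence $b-a_{i_0}\nprec 0$. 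Termination is also short once stated correctly: $b\nprec 0$ forces $\max_v b_v\geq 0$, and since every coordinate of every $a_i$ is strictly positive, each division step decreases $\max_v b_v$ by a fixed positive amount (your phrasing ``the first exponent is bounded below'' is not justified by $b\nprec 0$).

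\emph{Over-complication of (ii).} You worry that the recombined $z^{neg}$ might fail to have negative degree in variables $t_i$ with $i\notin S$, and propose a ``grouping and recombination'' that you do not carry out. This worry dissolves once you use that each $a_i$ has all coordinates strictly positive: for $S\neq\emptyset$ and any fixed $i'\in S$, the termination condition gives $(b_{S,j})_v<(a_{i'})_v$ for \emph{every} $v$, while the denominator $\prod_{i\in S}(1-\bt^{a_i})$ has $t_v$-degree $\sum_{i\in S}(a_i)_v\geq(a_{i'})_v$. Hence each individual $S\neq\emptyset$ summand already has strictly negative degree in every variable $t_v$; no recombination argument is needed, and this is how the paper treats (ii) as ``automatic''.
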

\begin{proof}
In {\it (a)} we use induction. Assume that we have an expression
$g(\bt)=\bt^b/ \prod_{i\in S}(1-\bt^{a_i})$ such that $b\nprec 0$ and
there exists some $i_0$ such that $b\nprec a_{i_0}$. Then we replace
$g$ by $-\bt^{b-a_{i_0}}/ \prod_{i\in S\setminus \{i_0\}}(1-\bt^{a_i})+
\bt^{b-a_{i_0}}/ \prod_{i\in S}(1-\bt^{a_i})$. Note that
$b-a_{i_0}\nprec 0$, hence the new fractions have similar form.
  Starting from the
original expression of $z$,  this step repeated whenever is applicable,
we get (\ref{eq:EuAl}).

{\it (b)} Define   $P^+$ and $z^{neg}$ as is indicated in (c). Then properties $(i)$--$(ii)$ are automatically satisfied. Next,  we prove the uniqueness  of the decomposition. We need to show that if $P^+(\bt)+ z^{neg}(\bt)=0$ ($\dag$)
and if $P^+$ and  $z^{neq}$ satisfy $(i)$--$(ii)$, then both are zero.
But, if $P^+$ is non--zero then by ($\dag$) and $(ii)$   $P^+$ has negative degrees
 in all variables, a fact which contradicts $(i)$.
\end{proof}

\subsection{\bf The polynomial part of $f_h(\bt_\cali)$ by duality}\label{ss:polpdual} \
Assume again that we are in the situation of a plumbing graph and its
rational function  $f(\bt)$ as in Sections \ref{s:prliminaries} and \ref{s:4}.

We fix $h\in H$. Then, by the proof of Theorem
\ref{Qqduality} we have
$f_h(\mathbf{t}_{\mathcal{I}})=\mathbf{t}_{\mathcal{I}}^{Z_K-E} \cdot f_{[Z_K]-h}(\mathbf{t}_{\mathcal{I}}^{-1})$ and
 $T^{\infty}f_h(\bt_\cali)=\bt_{\cali}^{Z_K-E}  Z_{[Z_K]-h}(\bt_\cali^{-1})$.
Write this expression
$\bt_{\cali}^{Z_K-E}  Z_{[Z_K]-h}(\bt_\cali^{-1})$ as $\sum _{l'\in Z_K-E-\calS'}w(l')\bt_\cali^{l'}$, where
$[l']=h$ automatically whenever $w(l')\not=0$.
define
\begin{equation}\label{eq:SUMf}
P^+_{h,\cali}(\bt_\cali):=\sum_{l'\nprec 0} w(l')\bt_\cali^{l'}, \ \ \ \
f^{neg}_{h,\cali}(\bt_\cali):= \sum_{l'\prec 0} w(l')\bt_\cali^{l'}.\end{equation}
Write also $P^+_{h,\cali}({\mathbf 1}):=
P^+_{h,\cali}(\bt_\cali)|_{t_i=1 \forall \, i}$.
\begin{theorem}\label{th:fneq} Consider the decomposition
$f_h(\bt_\cali)=P^+_{h,\cali}(\bt_\cali)+f^{neg}_{h,\cali}(\bt_\cali)$ from (\ref{eq:SUMf}).

(a) $P^+_{h,\cali}(\bt_\cali)$ and $f^{neg}_{h,\cali}(\bt_\cali)$ satisfy the requirements (i)--(ii) from Proposition \ref{prop:LSzDIV}(b).
In particular, by the uniqueness of the decomposition, this decomposition agrees with the decomposition
 from  \ref{prop:LSzDIV}(a)--(c) given by Euclidean division.

(b) $P^+_{h,\cali}({\mathbf 1})=
Q_{[Z_K]-h,\cali}(Z_K-r_h)$. In particular,
$P^+_{h,\cali}({\mathbf 1})=
{\rm pc} ^{\pi_\cali(\calS'_{{\mathbb R}})}(Z_h(\bt_\cali)).$
\end{theorem}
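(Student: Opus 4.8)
The plan is to derive both parts of Theorem \ref{th:fneq} directly from the symmetry identity established in the proof of Theorem \ref{Qqduality}, together with Proposition \ref{prop:LSzDIV} and the periodic constant formula. The starting point is the key formula $T^{\infty}f_h(\bt_\cali)=\bt_{\cali}^{Z_K-E}Z_{[Z_K]-h}(\bt_\cali^{-1})=\sum_{l'\in Z_K-E-\calS'}w(l')\bt_\cali^{l'}$, so that the coefficients $w(l')$ are precisely the coefficients $z(Z_K-E-l')$ of the dual series $Z_{[Z_K]-h}$.

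For part (a): I would first check that $P^+_{h,\cali}(\bt_\cali)$ is a genuine polynomial, i.e. a \emph{finite} sum. Indeed the support of $Z_{[Z_K]-h}(\bt_\cali^{-1})$ lies in $-\pi_\cali(\calS')$, so the support of $\bt_\cali^{Z_K-E}Z_{[Z_K]-h}(\bt_\cali^{-1})$ lies in $(Z_K-E)|_\cali-\pi_\cali(\calS')$; intersecting this with $\{l'\nprec 0\}$ gives a finite set by the finiteness property \eqref{eq:finite} (applied in $\pi_\cali(\calS')$, using that the projected Lipman cone sits in a strictly convex cone, cf.\ the decomposition \eqref{eq:fI}). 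Moreover each monomial in $P^+_{h,\cali}$ has exponent $l'\nprec 0$ by construction, which is exactly requirement (i). For $f^{neg}_{h,\cali}$, by construction all monomials in its Taylor expansion have $l'\prec 0$, and since $f_h(\bt_\cali)=P^+_{h,\cali}+f^{neg}_{h,\cali}$ with $P^+_{h,\cali}$ a polynomial, $f^{neg}_{h,\cali}=f_h(\bt_\cali)-P^+_{h,\cali}$ is a rational function; the fact that its Taylor expansion at the origin is supported in $\{l'\prec 0\}$ forces it to have negative degree in every variable $t_i$ (this is where one uses that $f_h(\bt_\cali)$ is a rational function whose denominator is a product of $1-\bt^{a_i}$ with $a_i\succ 0$, so ``negative degree in each variable'' is equivalent to the Taylor expansion being supported away from $\{l'\nprec 0\}$). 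Then requirement (ii) holds, and uniqueness in Proposition \ref{prop:LSzDIV}(b) identifies this decomposition with the Euclidean one.

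For part (b): evaluating at $t_i=1$, we get $P^+_{h,\cali}(\mathbf 1)=\sum_{l'\nprec 0}w(l')=\sum_{l'\nprec 0}z(Z_K-E-l')$. Re-indexing by $x=Z_K-E-l'$ turns the condition $l'\nprec 0$ into $x\prec Z_K-E$ componentwise on $\cali$, i.e.\ $x|_\cali\prec(Z_K-E)|_\cali$; and since $[x]=[Z_K]-h$, exactly as in the last sentence of the proof of Theorem \ref{Qqduality}, the strict inequality $x|_\cali\prec(Z_K-r_h)|_\cali$ is equivalent to $x|_\cali\leq(Z_K-E)|_\cali$. Hence $P^+_{h,\cali}(\mathbf 1)=\sum_{x|_\cali\prec(Z_K-r_h)|_\cali,\ [x]=[Z_K]-h}z(x)=q_{[Z_K]-h,\cali}(Z_K-r_h)$. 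But this is precisely the quantity appearing in Theorem \ref{Qqduality}(a) (before the inclusion--exclusion step) — more directly, summing over all $\emptyset\ne\calj\subset\cali$ via \eqref{eq:InEx} does not change the full sum when we evaluate the original $Q_{[Z_K]-h,\cali}$; I should instead observe that $\sum_{l'\nprec 0}w(l')$ unpacks so that the weak-inequality truncation gives $Q_{[Z_K]-h,\cali}(Z_K-r_h)$ directly, matching Theorem \ref{Qqduality}(b). Then the final equality $P^+_{h,\cali}(\mathbf 1)={\rm pc}^{\pi_\cali(\calS'_{\mathbb R})}(Z_h(\bt_\cali))$ is immediate from Theorem \ref{Qqduality}(b).

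The main obstacle I anticipate is the bookkeeping around the two truncations (the strict $\prec$ of the modified counting function $q$ versus the weak $\ngeq$ of $Q$) and making sure the re-indexing $x\mapsto Z_K-E-x$ is performed consistently, so that ``$l'\nprec 0$'' on the $T^\infty$ side corresponds to the right truncation on the $Z_{[Z_K]-h}$ side; this is exactly the combinatorial lemma ``if $[a]=h$ and $a_v<(r_h)_v$ then $a_v\le(r_h)_v-1<0$'' already used in the proofs of Theorems \ref{thm:mpc} and \ref{Qqduality}, so it should go through, but one must be careful that the shift by $-E$ (rather than $0$) is what converts strict into weak. The finiteness of $P^+_{h,\cali}$ is routine given \eqref{eq:finite} and \eqref{eq:fI}, and the uniqueness step is quoted verbatim from Proposition \ref{prop:LSzDIV}, so those require no new work.
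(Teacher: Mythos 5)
Part (a) is essentially the paper's argument with the finiteness made explicit; that is fine. Part (b), however, contains a genuine error in the re-indexing, and the attempt to patch it at the end is not actually carried out.

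The mistake is here: you write that ``re-indexing by $x=Z_K-E-l'$ turns the condition $l'\nprec 0$ into $x\prec Z_K-E$.'' This is not correct. By definition $l'\prec 0$ means $l'_v<0$ for \emph{all} $v\in\cali$; its negation $l'\nprec 0$ means there \emph{exists} $v\in\cali$ with $l'_v\geq 0$. Under $x=Z_K-E-l'$ this is ``$\exists\,v\in\cali:\ x_v\leq (Z_K-E)_v$'', i.e.\ $x|_\cali\nsucc(Z_K-E)|_\cali$ --- an existential, weak-inequality condition --- not the universal, strict condition $x|_\cali\prec(Z_K-E)|_\cali$. The affine involution $l'\mapsto Z_K-E-l'$ sends $\nprec$ to $\nsucc$; it does not preserve $\prec$. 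The integrality lemma you invoke (for $[x]=[Z_K-r_h]$, $x_v\leq(Z_K-E)_v$ iff $x_v<(Z_K-r_h)_v$) then converts this, coordinatewise, into ``$\exists\,v:\ x_v<(Z_K-r_h)_v$'', which is exactly the $\ngeq$-truncation defining $Q$, not the $\prec$-truncation defining $q$. Your derivation therefore produces $P^+_{h,\cali}(\mathbf 1)=q_{[Z_K]-h,\cali}(Z_K-r_h)$, whereas the theorem asserts $P^+_{h,\cali}(\mathbf 1)=Q_{[Z_K]-h,\cali}(Z_K-r_h)$; these two counting functions agree only in the one-variable case $|\cali|=1$, and are genuinely different in general, being related by the non-trivial inclusion--exclusion (\ref{eq:InEx}).

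You sense the problem in your final sentences (``I should instead observe that $\sum_{l'\nprec 0}w(l')$ unpacks so that the weak-inequality truncation gives $Q$...''), but this is left as an intention, not performed; and the intermediate remark about the inclusion--exclusion sum ``not changing the full sum'' is incorrect as stated. The fix is simply to carry out the re-indexing correctly: $l'\nprec 0$ becomes $x\nsucc Z_K-E$ on $\cali$, hence $x|_\cali\ngeq(Z_K-r_h)|_\cali$, and thus $P^+_{h,\cali}(\mathbf 1)=\sum_{x|_\cali\ngeq(Z_K-r_h)|_\cali,\,[x]=[Z_K]-h}z(x)=Q_{[Z_K]-h,\cali}(Z_K-r_h)$; the final identity with ${\rm pc}^{\pi_\cali(\calS'_\R)}(Z_h(\bt_\cali))$ then follows from Theorem \ref{Qqduality}(b), exactly as you say. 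With that one correction, your approach coincides with the paper's.
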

\begin{proof} {\it (a)} By its definition, $P^+_{h,\cali}(\bt_\cali)$
is a finite sum, hence
$f^{neg}_{h,\cali}(\bt_\cali)=
f_h(\bt_\cali)-P^+_{h,\cali}(\bt_\cali)$
is a rational function. Since in its expansion all monomials $w(l')\bt^{l'}$  satisfy
$l'\prec 0$, it has negative degree in all the variables.

{\it (b)} By the above transformations $w(l')=z(Z_K-E-l')$, $[l']=h$. Hence
$l'\nprec 0$ transforms into $Z_K-E\nprec Z_K-E-l'$, or
$Z_K-E-l'\not\geq Z_K-r_h$. This proves the  first identity. For
the second one use Theorem \ref{Qqduality} {\it (b)}.
\end{proof}
Corollary \ref{dualcor} and Theorem \ref{th:fneq} combined gives
\begin{corollary}\label{cor:Pegysw}
For any $h\in H$ one has
$P^+_{h,\caln}({\mathbf 1})=-\mathfrak{sw}_h^{norm}(M)$.
\end{corollary}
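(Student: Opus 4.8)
The statement to prove is Corollary \ref{cor:Pegysw}: $P^+_{h,\caln}(\mathbf{1}) = -\mathfrak{sw}_h^{norm}(M)$.

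The plan is straightforward and essentially just a matter of chaining together the two results that immediately precede it. First I would invoke Theorem \ref{th:fneq}(b) applied with $\cali = \caln$, which gives the identity $P^+_{h,\caln}(\mathbf{1}) = Q_{[Z_K]-h,\caln}(Z_K - r_h)$. This reduces the claim to showing that the right-hand side equals $-\mathfrak{sw}_h^{norm}(M)$. But that is precisely the content of Corollary \ref{dualcor}, which states $Q_{[Z_K]-h,\caln}(Z_K - r_h) = -\mathfrak{sw}_h^{norm}(M)$. Composing the two equalities yields the result.

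To make the chain self-contained I would also recall where Corollary \ref{dualcor} itself comes from: it is the specialization $\cali = \caln$ of Theorem \ref{Qqduality}(b), which gives $\mathrm{pc}^{\pi_{\caln}(\calS'_{\mathbb{R}})}(Z_h(\bt_{\caln})) = Q_{[Z_K]-h,\caln}(Z_K - r_h)$, combined with Theorem \ref{th:RedThm} (the reduction theorem of \cite{LN}), which identifies $\mathrm{pc}^{\pi_{\caln}(\calS'_{\mathbb{R}})}(Z_h(\bt_{\caln}))$ with $\mathrm{pc}^{\calS'_{\mathbb{R}}}(Z_h(\bt)) = -\mathfrak{sw}_{-h*\sigma_{can}}(M) - \frac{(K+2r_h)^2 + |\calv|}{8} = -\mathfrak{sw}_h^{norm}(M)$ by the definition of the $r_h$-normalized Seiberg--Witten invariant from \ref{ss:perConst}. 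The only hypothesis to keep an eye on is that Theorem \ref{th:RedThm} applies to $\cali = \caln$ (the set of nodes), which is exactly the case it is stated for; no extra work is needed.

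I do not anticipate any real obstacle here, since every ingredient has already been established. If anything, the point requiring a sentence of care is the passage $P^+_{h,\caln}(\mathbf{1}) = \mathrm{pc}^{\pi_{\caln}(\calS'_{\mathbb{R}})}(Z_h(\bt_{\caln}))$ in Theorem \ref{th:fneq}(b): this is where the $\{x \leftrightarrow Z_K - x\}$ Gorenstein-type symmetry and the Ehrhart--Macdonald--Stanley reciprocity are both secretly used (via Theorem \ref{Qqduality}), but since that theorem is already proved, the corollary is just bookkeeping. So the proof is essentially one line: apply Theorem \ref{th:fneq}(b) with $\cali=\caln$ and then Corollary \ref{dualcor}.
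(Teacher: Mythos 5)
Your proof is correct and is exactly the paper's argument: the text introducing Corollary \ref{cor:Pegysw} states verbatim that it follows from ``Corollary \ref{dualcor} and Theorem \ref{th:fneq} combined.'' The extra unwinding you give of Corollary \ref{dualcor} via Theorem \ref{Qqduality}(b) and Theorem \ref{th:RedThm} is accurate but not needed beyond the two-step chain.
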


 We illustrate the above duality results on the following example.

\begin{example}
Let us consider the Brieskorn sphere $M=\Sigma(2,5,7)$. Recall that the
entries of  $E^*_v$'s  are the corresponding columns of the matrix
$ \{-(E^*_i,E^*_j)\}_{i,j}$, which is in fact $-I^{-1}$,
the negative of the inverse
of the intersection matrix. In this case the graph and $-I^{-1}$
 are the following:
\vspace{0.3cm}

\begin{picture}(200,50)(0,0)
\put(70,40){\makebox(0,0){\small{$-2$}}}
\put(70,50){\makebox(0,0){\small{$E_2$}}}
\put(90,40){\makebox(0,0){\small{$-1$}}}
\put(90,50){\makebox(0,0){\small{$E_1$}}}
\put(70,30){\circle*{3}}
\put(110,40){\makebox(0,0){\small{$-4$}}}
\put(110,50){\makebox(0,0){\small{$E_4$}}}
\put(130,40){\makebox(0,0){\small{$-2$}}}
\put(130,50){\makebox(0,0){\small{$E_5$}}}
\put(100,10){\makebox(0,0){\small{$E_3$}}}
\put(80,10){\makebox(0,0){\small{$-5$}}}
\put(90,30){\circle*{3}}
\put(110,30){\circle*{3}}
\put(130,30){\circle*{3}}
\put(90,10){\circle*{3}}
\put(70,30){\line(1,0){60}}\put(90,10){\line(0,1){20}}
\put(280,30){\makebox(0,0){
{$\tiny{
   -I^{-1}=
  \left[ {\begin{array}{ccccc}
   70 & 35 & 14 & 20 & 10 \\
   35 & 18 & 7 & 10 & 5 \\
   14 & 7 & 3 & 4 & 2 \\
   20 & 10 & 4 & 6 & 3 \\
   10 & 5 & 2 & 3 & 2 \\
  \end{array} } \right]}
$}
}}
\end{picture}

Therefore, the zeta-function reduced to the set of nodes $\caln=\{v_0\}$ and its `polynomial-negative degree part' decomposition (obtained by simple division
of polynomials) can be written as
$$f(t)=\frac{1-t^{70}}{(1-t^{35})(1-t^{14})(1-t^{10})}=
t+t^{11}+\frac{1-t+t^{15}+t^{21}}{(1-t^{14})(1-t^{10})}.$$
The same result follows by  duality as well.
Indeed, the Taylor expansion is  $Z(t)=1+t^{10}+t^{14}+\dots \ $ and one calculates $Z_K=(12,6,3,4,2)$ where we use short notation $(l_1,l_2,l_3,l_4,l_5)$ for $l=\sum_{i=1}^5 l_iE_i$. Hence, for
$t^{Z_K-E}Z(t^{-1})$ we get $t^{11}+t^1+t^{-3}+\cdots$, which gives
$P^+(t)=t^{11}+t$ again.
\end{example}

\subsection{Example. Plane curve singularities}
Let us describe the analogue of (\ref{eq:mpc}) and of the `polynomial-negative degree part'
decomposition for the embedded situation of an irreducible  plane curve singularity $g:(\mathbb{C}^2,0)\to (\mathbb{C},0)$.
In this case the series $z(t)$ is the Taylor expansion at the origin of the monodromy zeta function $\zeta(t)=\Delta(t)/(1-t)$,
cf. \cite{AC,AGZV}.
 A'Campo's formula constructs $\zeta(t)$ via  (\ref{eq:1.1}), but now applied to the
 minimal embedded resolution graph of the plane germ, whose unique $(-1)$-vertex has an extra arrowhead--neighbour (where  the arrow represents the strict transform of $\{g=0\}$, thus the $(-1)$-vertex is considered to be a node of the graph), and this series is reduced  to the variable of the $(-1)$-node.

 Let  $\mathcal{M}\subset \Z_{\geq 0}$ be the semigroup of $g$
consisting of all intersection multiplicities with all possible analytic germs.
 It is known \cite{cdg} that $z(t)=\sum_{s\in \mathcal{M}} t^s$. Let $\mu$ be the Milnor number of $g$.

We determine the polynomial part of $z(t)$  and we verify the analogue of (\ref{eq:mpc}). Here the needed Gorenstein duality reads as follows:
$(\dag)$ \ $s\not \in\mathcal{M}$ if and only if $\mu-1-s\in\mathcal{M}$.
 This also shows that $\Z\setminus \mathcal{M}$ is finite, its cardinality is exactly $\mu/2$, and the largest  element of $\Z\setminus \mathcal{M}$ is $\mu-1$.

Since $z(t)=\sum_{s\in\mathcal{M}}t^s=\sum_{s\geq 0}t^s-\sum_{s\not\in\mathcal{M}, s\geq 0}t^s$ we get that
$z^{neg}(t)=\sum_{s\geq 0}t^s=1/(1-t)$, and $P^+(t)=-\sum_{s\not\in\mathcal{M},s\geq 0}t^s$.
On the other hand, by duality $(\dag)$ one has
 $T^{\infty}z(t)=-t^{\mu-1}z(t^{-1})=-t^{\mu-1}(\sum_{s\in\mathcal{M}, s<\mu} t^{-s}+
\sum_{s\geq\mu} t^{-s})\stackrel{(\dag)}{=}-\sum_{s\not\in\mathcal{M},s\geq 0}t^s - (t^{-1}+t^{-2}+\cdots)
=P^+(t)- (t^{-1}+t^{-2}+\cdots)$,
whose part with positive exponents is exactly $P^+(t)$.

\section{Topological polytopes, counting lattice points and Seiberg--Witten invariants}\label{s:tPolSW}

\subsection{Motivation: polytopes and invariants of normal surface singularities}
The idea which connects geometric and topological invariants of normal surface singularities with the number of lattice points and the volume of certain polytopes has a long history
and it culminates in toric geometry. At the level of singularities,
the very first class  where this phenomenon appears is the Newton non-degenerate hypersurface
singularities (see eg. \cite{AGZV}). In this case one defines the Newton polytope $\calP$ associated with the defining equation and one proves that several invariants of the singularity can be recovered from $\calP$, see the following non-complete list of articles: \cite{Kouch,Var,BNnewt,D78,MT,Baldur}. In this article we would like to highlight only the geometric genus $p_g$ which equals the number of lattice points with strictly positive coordinates in $\calP$, cf.  \cite{MT}. Moreover, \cite{Baldur} shows that in the case when the germ has a rational homology sphere link $M$,  one has $p_g=-\frsw_0^{norm}(M)$ supporting the Seiberg--Witten invariant conjecture of N\'emethi and Nicolaescu \cite{NN1} for this case and, in particular, expressing the Seiberg--Witten invariant as a lattice point counting in the Newton polytope.

 This  analytic situation can be generalized to the case of
 isolated complete intersection singularities with certain non-degeneracy condition, cf. \cite{Khov,Mor,Okzeta,OkNdicis,BA}. For such germs, $p_g$ is expressed by \cite{Khov} and \cite{Mor} as an alternating sum of lattice points in certain polytopes obtained as
 Minkowski sums of Newton polyhedra defined by the equations of the germ.

 Having in mind the above correspondence $p_g=-\frsw_0^{norm}(M)$
 between the geometric genus and the Seiberg--Witten invariant
 (formulated by `Seiberg--Witten invariant conjecture' and
 valid for certain analytic types of singularities), it is natural to ask if there is an
 analogous interpretation of the  Seiberg--Witten invariant in terms of an alternating sum of
 lattice points in certain polytopes associated with the topological type.
 We will call such expression {\it inclusion--exclusion lattice point counting}, or IELP counting.

 The main result (and novelty)  of this section is
 a positive answer to this question, valid for any normal surface singularity with
 rational homology sphere link. Though in the case of $p_g$ and
 Merle--Teissier--Khovanskii--Morales type formulas the polytopes were
  associated with the equation of the germ,
 in this topological version they are associated with the resolution graph.

The wished expression starts with the results from
 \cite{LN}, which identifies the Seiberg--Witten invariant with certain coefficients of the Ehrhart quasipolynomial of a dilated polytope associated with the plumbing graph of the manifold $M$.
 However, merely with the techniques of \cite{LN} a concrete lattice point counting was obstructed.
 The point is that  the duality and construction of $P^+$ in the present note
 unlock the obstructions of \cite{LN} in the direction of an IELP counting.

\subsection{Preparation for an IELP expression.}\label{ss:preparation}
In section \ref{sec:denom} we have defined the linear map $\mathfrak{l}:\R^{|\cale|}\to \R^{|\calv|}$ by $\mathfrak{l}(\mathbf{x})=\sum_{v\in\cale}x_v E_v^*$ for any $\mathbf{x}=(x_v)_{v\in\cale}\in \R^{|\cale|}$ and we have considered the representation given by the composition
$\rho:\Z^{|\cale|}\stackrel{\mathfrak{l}|_{\Z^{|\cale|}}}{\longrightarrow} L'\longrightarrow H=L'/L.$
Set also the notation $\mathfrak{l}_\cali$ for the map defined by the composition of $\mathfrak{l}$ with the projection $\pi_{\cali}:\R^{|\calv|}\to \R^{|\cali|}$. In particular, for $\cali=\{v\}$ the maps $\mathfrak{l}_v$ are the components of $\mathfrak{l}$.

Recall also that for any $\cali\subset \calv, \cali\neq \emptyset$ we have defined the closed dilated
{\it convex polytopes}  $\calP_\cali^{(l')}$ indexed by $l'\in L'$:
 $\calP_\cali^{(l')}=\cap_{v\in \cali}\calP_v^{(l')}$, where
 $\calP_{v}^{(l')}=\{{\bf x}\in \mathbb{R}^{|\mathcal{E}|}_{\geq 0}\,:\,
\sum _{e\in \mathcal{E}}x_eE^*_e|_v\leq l'_v\}$.

In this section (similarly as in  \cite{LN}) we consider also the dilated {\it concave polytopes} as well:
\begin{equation}
\widetilde\calP_\cali^{(l')}=\bigcup_{v\in \cali} \calP_v^{(l')}\subset \R^{|\cale|}_{\geq 0}.
\end{equation}
>From the definition is clear that $\widetilde\calP_\cali^{(l')}$ depends only on the
restriction $l'|_{\cali}$. Furthermore, it can happen that in the union the contribution of some
$\calP_v^{(l')}$ is superfluous. This is what we clarify next.

If for some $\cali\subset \calv$ one has $l'=\sum _{v\in\cali}a_vE_v^*$ with all $a_v\not=0$,
then we say that the $E^*$--support of $l'$ is $\cali$, and we write $sp^*(l')=\cali$.
In the sequel we are interested mainly in those cases when the $E^*$--support of $l'$ is contained in
$\caln$. For such cycles we have the following additional `inclusion property' of the polytopes
$\widetilde\calP_\cali^{(l')}$.
\begin{lemma}\label{lem:red+}
Assume that $sp^*(l')=\cali\subset \caln$, $\cali\not=\emptyset$.
 Then $\widetilde\calP_{v}^{(l')}\subset \widetilde\calP_{\cali}^{(l')}$ for any $v\in\calv$.
\end{lemma}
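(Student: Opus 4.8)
\textbf{Proof plan for Lemma \ref{lem:red+}.}
The goal is to show that when the $E^*$-support of $l'$ lies inside $\caln$, every individual polytope $\calP_v^{(l')}$ (for an arbitrary vertex $v\in\calv$) is already absorbed into the union $\widetilde\calP_\cali^{(l')}=\bigcup_{w\in\cali}\calP_w^{(l')}$. Concretely, since $\calP_v^{(l')}=\{\mathbf{x}\in\R^{|\cale|}_{\geq 0}\,:\,\mathfrak{l}_v(\mathbf{x})\leq l'_v\}$, it suffices to prove that for every $v\in\calv$ there exists some node $w\in\cali$ such that $\mathfrak{l}_v(\mathbf{x})\leq l'_v$ implies $\mathfrak{l}_w(\mathbf{x})\leq l'_w$ for all $\mathbf{x}\geq 0$. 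The plan is to compare the two linear functionals $\mathfrak{l}_v$ and $\mathfrak{l}_w$ coefficientwise and match this with the ratio $l'_w/l'_v$ of the dilation parameters, using the fact that the coordinates of $l'$ are supported on nodes.

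First I would rewrite everything in terms of the entries of $-I^{-1}$. For an end-vertex $e\in\cale$ the coordinate $(E_e^*)|_v=-(E_e^*,E_v^*)\cdot(\text{sign normalization})$ is, up to the universal positive factor, the $(e,v)$ entry of $-I^{-1}$; denote it $m_{e,v}>0$. Then $\mathfrak{l}_v(\mathbf{x})=\sum_{e\in\cale}m_{e,v}x_e$ and $l'_v=\sum_{u\in\cali}a_u\,m_{u,v}$ (here I use $sp^*(l')=\cali$, reading $E_u^*$ in the $E$-basis again via $-I^{-1}$, so only $u\in\cali\subset\caln$ contribute). The key structural input is the well-known ``star-shaped'' proportionality of the entries of $-I^{-1}$ along the legs of the tree: if $w$ lies on the geodesic in $\Gamma$ from $v$ to the end-vertex $e$ (equivalently $w$ separates $v$ from $e$), then $m_{e,v}=m_{e,w}\cdot m_{v,w}/m_{w,w}$, and more generally $m_{e,v}\le (m_{v,w}/m_{w,w})\,m_{e,w}$ when $w$ separates $e$ from $v$ while $m_{e,v}=(m_{e,w}/m_{w,w})\cdot m_{w,v}\ge$ the analogous bound otherwise. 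I would isolate the precise monotonicity statement I need (it is standard, see e.g. \cite{EN,Nfive}): for any two vertices $v,w$, and any end $e$, the ratio $m_{e,v}/m_{e,w}$ attains its maximum over $e$ when $e$ runs through the ends on the ``$v$-side'' of $w$, and that maximum equals $m_{v,w}/m_{w,w}$.

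Next I would choose the node $w\in\cali$ appropriately. Since $\cali\subset\caln$ and $\Gamma$ is a tree, consider the node $w\in\cali$ closest to $v$; then $w$ separates $v$ from every other node $u\in\cali$ and from every end $e$ whose path to $v$ passes through a node of $\cali$. Using the proportionality above one gets, for each end $e$, the bound $m_{e,v}\le (m_{v,w}/m_{w,w})\,m_{e,w}$ — either because $w$ separates $e$ from $v$ (direct application), or because $e$ lies on the $v$-side of $w$, in which case one checks the ratio is still bounded by $m_{v,w}/m_{w,w}$ by the maximality statement. Summing against $x_e\ge 0$ gives $\mathfrak{l}_v(\mathbf{x})\le (m_{v,w}/m_{w,w})\,\mathfrak{l}_w(\mathbf{x})$. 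On the dilation side, since $l'$ is supported on $\cali$ and $w$ separates $v$ from all $u\in\cali$, the same proportionality yields $l'_v=\sum_{u\in\cali}a_u m_{u,v}=(m_{v,w}/m_{w,w})\sum_{u\in\cali}a_u m_{u,w}=(m_{v,w}/m_{w,w})\,l'_w$. Combining: if $\mathbf{x}\in\calP_v^{(l')}$, i.e. $\mathfrak{l}_v(\mathbf{x})\le l'_v$, then $(m_{v,w}/m_{w,w})\,\mathfrak{l}_w(\mathbf{x})\le \mathfrak{l}_v(\mathbf{x})\le l'_v=(m_{v,w}/m_{w,w})\,l'_w$, so $\mathfrak{l}_w(\mathbf{x})\le l'_w$, hence $\mathbf{x}\in\calP_w^{(l')}\subset\widetilde\calP_\cali^{(l')}$.

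\textbf{Main obstacle.} The delicate point is the case where $v$ lies ``past'' the closest node $w$ of $\cali$ (on a leg or a branch hanging off $w$ that contains no node of $\cali$), so that some ends $e$ are on the $v$-side of $w$ and the clean separation identity $m_{e,v}=(m_{v,w}/m_{w,w})m_{e,w}$ fails. There one genuinely needs the inequality $m_{e,v}\le (m_{v,w}/m_{w,w})m_{e,w}$, which rests on the monotone behaviour of $-I^{-1}$ along a leg — equivalently, that the continued-fraction weights decrease as one moves toward the end. I would verify this via the recursive structure of $-I^{-1}$ on a chain (or cite it), checking separately that the dilation identity $l'_v=(m_{v,w}/m_{w,w})l'_w$ is unaffected because $l'$ has no $E^*$-component on that branch. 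Handling this case carefully — and making sure the chosen $w$ works uniformly for every $v\in\calv$, including $v\in\cali$ itself (where $w=v$ and the statement is trivial) — is where the real work lies; the rest is bookkeeping with the tree geometry.
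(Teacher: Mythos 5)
The core idea of your plan — compare the linear form $\mathfrak{l}_v$ against those of nodes in $\cali$ and use the proportionality of entries of $-I^{-1}$ along the tree — is the right one, but the reduction to a \emph{single} node $w\in\cali$ is a genuine gap. You set out to prove the stronger claim that for every $v$ there is one $w\in\cali$ with $\calP_v^{(l')}\subset\calP_w^{(l')}$, choosing $w$ as the node of $\cali$ closest to $v$ and asserting that it ``separates $v$ from every other node $u\in\cali$.'' That is false when the connected component $\Gamma_1$ of $\Gamma\setminus\cali$ containing $v$ is adjacent to \emph{more than one} node of $\cali$ (e.g.\ a chain $u_1 - v - u_2$ with $u_1,u_2\in\cali$ and $v\notin\cali$); there is then no single separating node, the multiplicativity $\lambda_{vv'}=\lambda_{vv''}\lambda_{v''v'}$ no longer reduces $l'_v$ to a scalar multiple of $l'_w$, and a single containment $\calP_v^{(l')}\subset\calP_w^{(l')}$ can actually fail.

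The paper handles exactly this case by writing $E^*_v=\sum_k r_kE^*_{u_k}+\sum_{u\in\calv(\Gamma_1)}r_uE_u$, where $\{u_k\}$ ranges over \emph{all} nodes of $\cali$ adjacent to $\Gamma_1$ and all $r_k,r_u>0$. This gives $l'_v=\sum_k r_k l'_{u_k}$ (equality, using $sp^*(l')\subset\cali$) and $\mathfrak{l}_v(\mathbf{x})\ge\sum_k r_k\mathfrak{l}_{u_k}(\mathbf{x})$; then $\mathfrak{l}_v(\mathbf{x})\le l'_v$ forces, by positivity of the $r_k$, that $\mathfrak{l}_{u_k}(\mathbf{x})\le l'_{u_k}$ for at least one $k$, giving $\mathbf{x}\in\bigcup_k\calP_{u_k}^{(l')}$. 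Note the conclusion is intrinsically a union, not a single inclusion — so the extra generality of working with the union is not cosmetic. To repair your argument you would need to replace ``there exists a single $w$'' with this convex-combination-and-pigeonhole step across all nodes of $\cali$ adjacent to $\Gamma_1$; the $|\cali'|=1$ case (your ``main obstacle'') then becomes the degenerate instance and the monotonicity-along-a-leg inequality you identified is precisely the sign condition $r_u>0$ for $u\in\calv(\Gamma_1)$ in the decomposition $(\dag)$.
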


\begin{proof}
The $\cali=\caln$ case appears in the proof of Theorem 5.4.2(a) of \cite{LN}. The general case is
rather similar, for the convenience of the reader we sketch the proof.
We assume that $v\not \in \cali$. Let $\Gamma_1$ be that connected component of the graph $\Gamma\setminus \cali$ which contains $v$. Then consider  those vertices  $\{u_k\}_{k\in\cali'}$ of $\cali$,
which are adjacent to $\Gamma_1$ ($\cali'\subset \cali$, $|\cali'|>0$). We claim that there exists positive rational numbers $\{r_k\}_{k\in \cali'}$ such that
\begin{equation}\label{eq:rk}
E^*_v-\textstyle{\sum_k} r_k \cdot E^*_{u_k}=i(E^*_v(\Gamma_1)),\end{equation}
where $i$ is the natural inclusion $L(\Gamma_1)\hookrightarrow L(\Gamma)$.
The numbers $r_k$ are determined in such a way that the cycle $D$ from the
left--hand side of (\ref{eq:rk})
has vanishing multiplicity along each $E_{u_k}$. This provides a linear system with $|\cali'|$ variables and equations with matrix $(E^*_{u_k}, E^*_{u_l})_{k,l}$ which is negative definite.
E.g., for further references, if $|\cali'|=1$ then
$r_1=(E^*_v,E^*_{u_1})/(E^*_{u_1},E^*_{u_1})$.

Once we have
determined $D$ one verifies (via intersection with base elements $E_v$) that it is supported on
$\Gamma_1$ and in fact equals with the cycle from the right--hand side of (\ref{eq:rk}).

(\ref{eq:rk}) shows that
$(\dag)$ $E^*_v=\sum_k r_k  E^*_{u_k}+\sum _{u\in\calv(\Gamma_1)} r_uE_u$ for certain
 positive rational numbers $\{r_{u}\}_{u\in\calv(\Gamma_1)}$.
This shows that for any $i\in \cali$ one has $(E^*_v,E^*_i)=\sum_kr_k (E^*_{u_k},E^*_i)$, hence
by the support assumption
$l'_v=\sum_k r_kl'_{u_k}$. Furthermore, the intersection of $(\dag)$ with $E^*_e$ for $e\in\cale$ gives
$E^*_e|_v\geq \sum_k r_k E^*_e|_{u_k}$. These facts prove that
$\widetilde\calP_{v}^{(l')}\subset \cup_k \widetilde\calP_{u_k}^{(l')}$.
\end{proof}

The duality results of the present article motivate us to distinguish a special element from $L'$
\begin{equation}\label{eq:ltop}
l'_{top}:=Z_K-E+\sum_{v\in\cale}E_v^*,
\end{equation}
which serves the role of a special parameter and defines the polytope 
$\widetilde\calP_\calv^{(l'_{top})}$.
By (\ref{ZK}) $l'_{top}=\sum_{v\in\caln}(\delta_v-2)E_v^*$ too, hence $sp^*(l_{top}')=\caln$ and by
Lemma \ref{lem:red+} we obtain
$\widetilde\calP_\calv^{(l'_{top})}=\widetilde\calP_\caln^{(l'_{top})}$.
We call it the  {\em topological polytope} associated with the graph $\Gamma$
and we denote it by $\widetilde\calP_\caln^{\,top}$.

A series of sub--polytopes of $\widetilde\calP_\caln^{(l'_{top})}$ are defined as follows.
Let us write the set of nodes as $\caln=\{v_1, \ldots, v_s\}$. Then
consider the multiset $\caln^m:=\{v_1^{\delta_{v_1}-2},\ldots, v_s^{\delta_{v_s}-2}\}$, where in
$\caln^m$ each node $v$ has multiplicity $\delta_{v}-2$. Since $\sum_{v\in \caln}(\delta_v-2)
=|\cale|-2$, $\caln^m$ contains $|\cale|-2 $ symbols.
Let $\cali^m\subset \caln^m$ be a non--trivial sub--multiset, $\cali^m=\{v_{i_1}^{k_{i_1}},\ldots, v_{i_r}^{k_{i_r}}\}$,
where $\{v_{i_1},\ldots, v_{i_r}\}=\cali\subset \caln$ and
$0<k_{i_j}\leq \delta_{v_{i_j}}-2$ for all $j$, and  $0<r\leq s$. To such $\cali^m$ we define $|\cali^m|:=\sum_j k_{i_j}$, the cycle
$l'(\cali^m)=\sum_j k_{i_j}E^*_{v_{i_j}}$ with $E^*$--support $\cali$,  and the polytope
$\widetilde\calP_{\cali^m}:= \widetilde{\calP}_\cali ^{\,l'(\cali^m)}$.

\begin{theorem}\label{th:swform}
For any dilated polytope $\calP_\cali^{(l')}$ we denote by $R_h(\calP_\cali^{(l')})$ the number of integral points in $\calP_\cali^{(l')}\cap \rho^{-1}([l']-h)$ with all coordinates strictly positive. Then
\begin{equation}\label{eq:sw}
-\mathfrak{sw}_h^{norm}(M)=\sum_{\emptyset\neq
\cali^m\subset \caln^m}(-1)^{|\cale|-|\cali^m|}R_h(\widetilde\calP_{\cali^m}).
\end{equation}
\end{theorem}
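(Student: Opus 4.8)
The strategy is to translate the IELP sum on the right-hand side of \eqref{eq:sw} into the evaluation at $\mathbf 1$ of the polynomial part $P^+_{0,\caln}(\bt_\caln)$ (appropriately twisted by $h$) and then invoke Corollary \ref{cor:Pegysw}. The bridge is provided by Theorem \ref{thm:mpc} and Theorem \ref{Qqduality}: the quantity $\mathrm{pc}^{\pi_\caln(\calS'_\R)}(Z_h(\bt_\caln))=-\frsw^{norm}_h(M)$ can be written as $Q_{[Z_K]-h,\caln}(Z_K-r_h)$, and by the inclusion--exclusion principle \eqref{eq:InEx} this equals $\sum_{\emptyset\neq\calj\subset\caln}(-1)^{|\calj|+1}q_{[Z_K]-h,\calj}(Z_K-r_h)$. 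So the first concrete step is to show that each modified counting function value $q_{[Z_K]-h,\calj}(Z_K-r_h)$ (a single sum of $Z$-coefficients over a simplex-type region) can be rewritten as a signed count of lattice points, strictly positive in the $\cale$-coordinates, lying in a concave polytope $\widetilde\calP^{(l'(\cali^m))}_\cali$ — this is where the multiset structure $\caln^m$ enters, the multiplicity $\delta_v-2$ coming from the exponents in $f(\bt)=\prod_v(1-\bt^{E^*_v})^{\delta_v-2}$ after reduction via \eqref{eq:fI}.

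\textbf{Key steps, in order.} First I would unwind, via Corollary \ref{cor:NEW}(a) and the product formula \eqref{eq:fI}, the modified counting function $q_{[Z_K]-h,\cali}$ of the reduced series $f_h(\bt_\cali)$ into an alternating sum of Ehrhart counting functions $\mathcal Q_{h',\calf_\cali\setminus\calg^{co}_\cali}(\cdot)$ of the \emph{convex} dilated polytopes $\calP^{(l')}_\cali$, where the numerator exponents of $f(\bt_\cali)$ (coming from $R(\bt_\cali)$ and the $(1-\bt^{E^*_v})^{\delta_v-2}$ expansion) supply the shift vectors $b_k$ and coefficients $\iota_k$. Second, I would feed this into the duality: evaluating at the special point $Z_K-r_h$, i.e. at the $E^*$-support-$\caln$ cycle $l'_{top}=\sum_{v\in\caln}(\delta_v-2)E^*_v$, and use Lemma \ref{lem:red+} together with $sp^*(l'_{top})=\caln$ to collapse any union $\widetilde\calP^{(l'_{top})}_v$ into $\widetilde\calP^{top}_\caln$; this is exactly what guarantees the IELP sum ranges over the right index set $\{\emptyset\neq\cali^m\subset\caln^m\}$ and not a larger one. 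Third, I would convert the "convex polytope, facets removed" counts produced by the Ehrhart machinery into "concave polytope, strict positivity in $\cale$-coordinates" counts $R_h(\widetilde\calP_{\cali^m})$ via complementation: a point fails to lie in $\widetilde\calP_\cali=\bigcup_{v\in\cali}\calP_v$ precisely when it lies in $\bigcap_{v\in\cali}(\R^{|\cale|}_{\geq0}\setminus\calP_v)$, and a second inclusion--exclusion over the nodes in $\cali$ reorganizes the $q_{[Z_K]-h,\cali}$'s into the $R_h(\widetilde\calP_{\cali^m})$'s, producing the sign $(-1)^{|\cale|-|\cali^m|}$ (the $|\cale|$ coming from the number of variables $a_i=E^*_e$ in the denominator, as in the $(-1)^n$ of \eqref{eq:EMS}/Theorem \ref{thm:mpc}, and $|\cali^m|=\sum_j k_{i_j}$ tracking how many of the $\delta_v-2$ factors have been "spent"). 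Finally, combining with Corollary \ref{cor:Pegysw} for $h$-component $\caln$ yields \eqref{eq:sw}.

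\textbf{Main obstacle.} The delicate part is the bookkeeping of multiplicities and signs in passing from the single index $\cali\subset\caln$ (as it appears in $q_{[Z_K]-h,\cali}$ and \eqref{eq:InEx}) to the multiset index $\cali^m\subset\caln^m$ on the right-hand side of \eqref{eq:sw}. The exponent $\delta_v-2$ enters twice — once implicitly through the structure of $f(\bt_\caln)$ and once through the choice of sub-multiset — and the two inclusion--exclusion expansions (one over $\calj\subset\caln$ coming from $Q$ versus $q$, one over subsets of $\cali$ coming from $\widetilde\calP_\cali=\bigcup\calP_v$) must be shown to combine into a single clean alternating sum over $\caln^m$ with the stated sign $(-1)^{|\cale|-|\cali^m|}$. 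I expect this to require a careful generating-function identity: expanding $\prod_{v\in\caln}(1-\bt^{E^*_v})^{\delta_v-2}$ and matching, term by term under the substitution $\bt\mapsto\bt^{-1}$ combined with the $\bt^{Z_K-E}$ twist of \eqref{eq:ZKsym}, the monomial $\bt^{l'(\cali^m)}$ against the concave polytope $\widetilde\calP_{\cali^m}$. The strict positivity constraint in the $\cale$-coordinates should fall out of the boundary condition $\calf_\cali\setminus\calg^{co}_\cali$ exactly as in the proof of Theorem \ref{thm:mpc}, where $[a]=h$ and $a_v<(r_h)_v$ forces $a_v<0$; the analogous argument here forces the relevant coordinates to be $\geq 1$, i.e. strictly positive integers. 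Once the combinatorial identity organizing the signs is in place, the rest is assembling the already-established duality statements.
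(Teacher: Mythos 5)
Your plan is in the same spirit as the paper's proof — both go through the duality $-\frsw^{norm}_h(M)=P^+_{h,\caln}(\mathbf 1)=Q_{[Z_K]-h,\caln}(Z_K-r_h)$, use the explicit binomial structure of the numerator of $f$, and invoke Lemma \ref{lem:red+} to collapse the concave polytope — but the way you organize the bookkeeping has a genuine conceptual muddle that needs to be repaired before the proof closes.

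The "two inclusion--exclusion expansions" you describe as needing to combine are in fact one and the same and they cancel each other. Indeed $q_{h',\calj}$ counts $z$-coefficients over the intersection $\calP^{(\cdot)}_\calj=\bigcap_{v\in\calj}\calP^{(\cdot)}_v$, so the identity $Q=\sum_{\emptyset\neq\calj\subset\caln}(-1)^{|\calj|+1}q_\calj$ of \eqref{eq:InEx} is exactly the inclusion--exclusion that reassembles the union $\widetilde\calP_\caln=\bigcup_{v\in\caln}\calP_v$. Passing from $Q$ to the $q$'s and back to the union is a round trip; what actually produces the multiset sum over $\cali^m\subset\caln^m$ is the expansion of the numerator $\prod_{v\in\caln}(1-\bt^{E^*_v})^{\delta_v-2}$, whose coefficients $(-1)^{\sum k_v}\binom{\delta-2}{k}$ encode precisely the sub-multisets. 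The paper's proof makes this explicit by writing $\check P^+_{h,\cali}$ as a double sum over $k$ (from the numerator) and over $\mathbf x\in\Z^{|\cale|}_{\geq 0}$ (from the denominator), then performing the duality in one stroke via the $\bt\mapsto\bt^{-1}$ twist, the reindexing $k_v\mapsto\delta_v-2-k_v$, and the identity $Z_K-E=\sum_{v\in\caln}(\delta_v-2)E^*_v-\sum_{w\in\cale}E^*_w$.

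This last point is also where the strict positivity actually comes from, and your account of it is slightly off target. It does not fall directly out of the choice of facet set $\calf_\cali\setminus\calg^{co}_\cali$ from Corollary \ref{cor:NEW}; rather, the term $-\sum_{w\in\cale}E^*_w$ above induces the variable shift $\mathbf x\mapsto\mathbf x+\mathbf 1$, turning $\mathbf x\in\Z^{|\cale|}_{\geq0}$ into $\mathbf x\in\Z^{|\cale|}_{>0}$ and simultaneously shifting the dilation parameter from $(Z_K-E)|_\cali$ to $(\sum k'_vE^*_v)|_\cali$. Equivalently, this is the statement that the expansion at infinity of each factor $1/(1-\bt^{E^*_w})$ runs over strictly positive exponents. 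Relatedly, your phrase "evaluating at the special point $Z_K-r_h$, i.e. at the $E^*$-support-$\caln$ cycle $l'_{top}$" conflates two distinct cycles: $Z_K-r_h$ is where the counting function is evaluated, whereas $l'_{top}=Z_K-E+\sum_{w\in\cale}E^*_w$ is the effective dilation that appears only after the shift. Once these are straightened out — a single inclusion--exclusion, the binomial expansion supplying the multiset sum, and the $\mathbf x\mapsto\mathbf x+\mathbf 1$ shift supplying the strict positivity — your route reproduces formula \eqref{eq:pol} of the paper's proof and the rest follows as you indicate from Lemma \ref{lem:red+} and Corollary \ref{cor:Pegysw}.
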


\begin{proof}We need to compute $\calP^+_{h,\caln}({\bf 1})$. Let us fix some $\cali\subset \calv$,
and we reformulate $\calP^+_{h,\cali}({\bt_\cali})$.

 Sometimes is more convenient to calculate $\calP^+_{h,\cali}({\bt_\cali})$
 via the truncation of the series $Z_{[Z_K]-h}(\bt_\cali)$,
  the  {\em `dual' polynomial part}
\begin{equation}\label{eq:dualpol0}
\check{P}_{h,\cali}^+(\bt_\cali):=\sum_{[l']=[Z_K]-h\atop l'\nsucc Z_K-E} z(l')\bt_\cali^{l'}
\end{equation}
using the defining identity  (\ref{eq:SUMf})
\begin{equation}\label{eq:dual}
P_{h,\cali}^+(\bt_\cali)=\bt_\cali^{Z_K-E} \check{P}_{h,\cali}^+(\bt_\cali^{-1}).
\end{equation}
Then (\ref{eq:dualpol0}) and (\ref{eq:1.1}) gives for the dual polynomial part
\begin{equation}\label{eq:dualpol}
\check{P}_{h,\cali}^+(\bt_\cali)=\sum (-1)^{\sum_{v\in\caln}k_v}\cdot\binom{\delta-2}{k} \bt_\cali^{\sum_{v\in\caln}k_vE^*_v + \mathfrak{l}(\mathbf{x})}
\end{equation}
where $\binom{\delta-2}{k}:=\prod_{v\in\caln}\binom{\delta_v-2}{k_v}$ and the sum runs over all $0\leq k_v\leq \delta_v-2$ for every $v\in\caln$ and $\mathbf{x}\in \Z_{\geq0}^{|\cale|}\cap \rho^{-1}([Z_K-\sum_{v\in\caln}k_vE^*_v]-h)$ such that $\mathfrak{l}(\mathbf{x})\nsucc Z_K-E-\sum_{v\in\caln}k_vE^*_v$.

Now apply (\ref{eq:dual}),  (\ref{ZK}) and replace $k_v$ by $\delta_v-2-k_v$ in order to get  for the polynomial part
\begin{equation}\label{eq:pol}
P_{h,\cali}^+(\bt_\cali)=(-1)^{|\cale|}\sum (-1)^{\sum_{v\in \caln} k_v} \binom{\delta-2}{k}\bt_\cali^{\sum_\caln k_v E^*_v-\mathfrak{l}(\mathbf{x})},
\end{equation}
where the sum runs over all $0\leq k_v\leq \delta_v-2$ and $\mathbf{x}\in \widetilde\calP_\calv^{(\sum_{v\in\caln} k_v E^*_v)}\cap \rho^{-1}([\sum_{v\in\caln} k_v E^*_v]-h)\cap\Z_{>0}^{|\cale|}$.

If we specify $\cali=\caln$, Lemma \ref{lem:red+} implies that the sum in (\ref{eq:pol}) runs over all
$\cali^m\subset \caln^m$, $\cali^m\not=\emptyset$, and
lattice points of $\widetilde{P}_{\cali^m}\cap \rho^{-1}([\sum_I k_v E^*_v]-h)$ with all coordinates being strictly positive. Now, by Corollary \ref{cor:Pegysw} the formula (\ref{eq:sw}) follows.
\end{proof}

\begin{remark}
In the above proof the expressions with
the binomial coefficients coming from the numerator of $f(\bt)$
have appeared rather naturally. However, we prefer the multiset language of (\ref{eq:sw}) too,
since provides a different interpretation for the appearance of the binomial coefficients,
 and furthermore, in this way we obtain a perfect similarity
with the  geometric genus formula of Khovanskii \cite{Khov} and Morales \cite{Mor} for Newton non-degenerate isolated complete intersection singularities
 (supporting the intimate relationship between these two invariants).
\end{remark}

\begin{corollary}\label{prop:latcount}
If the conditions $Z_K|_{\caln}\leq E_v^*|_{\caln}$ for every node $v\in\caln$ are satisfied then the polynomial part can be expressed as
$$P_{h,\caln}^+(\mathbf{t}_{\caln})=\sum_{\mathbf{x}\in (\widetilde\calP_\caln^{top}
\setminus \widetilde\calt_\caln^{top})\cap \rho^{-1}([Z_K]-h)}\mathbf{t}_{\caln}^{Z_K-E-\mathfrak{l}(\mathbf{x})},$$
where $\widetilde\calt_{\caln}^{top} = \widetilde{\mathcal{F}}_{\caln}^{top} \setminus \widetilde{\mathcal{G}}^{co}_{\caln}$ is the set of `non-coordinate facets' of
$\widetilde\calP_\caln^{top}$. In  particular,
$$-\mathfrak{sw}^{norm}_h(M)= \mbox{cardinality of}\ \ \big((\widetilde\calP_\caln^{top}\setminus \widetilde\calt_\caln^{top})\cap \rho^{-1}([Z_K]-h)\big).$$
\end{corollary}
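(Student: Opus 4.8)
The plan is to return to the computation of $P^+_{h,\caln}(\bt_\caln)$ carried out inside the proof of Theorem~\ref{th:swform} --- in particular to the expression \eqref{eq:pol}, equivalently to the dual polynomial part $\check P^+_{h,\caln}$ of \eqref{eq:dualpol}, linked to $P^+_{h,\caln}$ by \eqref{eq:dual} --- and to show that under the hypothesis $Z_K|_\caln\le E^*_v|_\caln$ the alternating sum over sub--multisets $\cali^m\subset\caln^m$ in \eqref{eq:sw} collapses onto the single top polytope $\widetilde\calP_\caln^{top}$ with its non--coordinate facets deleted.

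First I would extract the arithmetic meaning of the hypothesis. Since $Z_K=(Z_K-E)+E$ and every coordinate of $E$ in the base $\{E_v\}_v$ equals $1$, the inequalities $Z_K|_\caln\le E^*_v|_\caln$ are equivalent to $(Z_K-E)_w<(E^*_v)_w$ for all $v,w\in\caln$. This is exactly the type of estimate that, as in the proof of Theorem~\ref{thm:mpc} (where the remark ``if $[a]=h$ and $a_v<(r_h)_v$ then $a_v<0$'' was the key), upgrades the truncation ``$l'\nsucc Z_K-E$'' defining $\check P^+_{h,\caln}$ into an honest polytope membership: for a support element $l'\in\calS'$ of $\check P^+_{h,\caln}$, with $[l']=[Z_K]-h$, a node coordinate $l'_v$ failing the strict inequality against $(Z_K-E)_v$ is forced below $(E^*_w)_v$ for every node $w$, which, together with $l'\in\calS'$ and $sp^*(l'_{top})=\caln$, pins the exponents of $\check P^+_{h,\caln}$ into $\widetilde\calP_\caln^{top}=\widetilde\calP_\caln^{(l'_{top})}$ (using Lemma~\ref{lem:red+} to let this one polytope serve uniformly, independently of which nodes carry the $E^*$--support) and makes ``$\nsucc Z_K-E$'' literally say ``not on a non--coordinate facet of $\widetilde\calP_\caln^{top}$''.

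Next I would feed this into the equivariant Ehrhart--Macdonald--Stanley machinery. Proposition~\ref{Lm-6} already confines the whole projected Lipman cone --- hence all the numerator exponents $b_k|_\caln$, as well as $(Z_K-E)|_\caln$ and $l'_{top}|_\caln$ --- to a single stable chamber $\mathfrak c$ of $f(\bt_\caln)$, so the Ehrhart counting functions $\calQ^{\mathfrak c}$ of \eqref{eq:QPol} attached to $\widetilde\calP_\caln^{top}$ and to all the smaller polytopes $\widetilde\calP_{\cali^m}$ belong to one combinatorially stable family. The previous paragraph identifies the alternating binomial/multiset sum of \eqref{eq:pol}--\eqref{eq:sw} with the inclusion--exclusion over the facet poset of $\widetilde\calP_\caln^{top}$ which, by \eqref{eq:EMS} applied facet by facet (packaged in Corollary~\ref{cor:NEW} through the distinguished coordinate--facet choice $\calg^{co}$), telescopes to the single count
\[
\check P^+_{h,\caln}(\bt_\caln)=\sum_{\mathbf{x}\in(\widetilde\calP_\caln^{top}\setminus\widetilde\calt_\caln^{top})\cap\rho^{-1}([Z_K]-h)}\bt_\caln^{\mathfrak{l}(\mathbf{x})},
\]
each lattice point counted once. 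Applying \eqref{eq:dual}, which replaces the exponent $\mathfrak{l}(\mathbf{x})$ by $Z_K-E-\mathfrak{l}(\mathbf{x})$ and leaves the residue class $[Z_K]-h$, yields the asserted formula for $P^+_{h,\caln}(\bt_\caln)$; specializing $\bt_\caln=\mathbf{1}$ and invoking Corollary~\ref{cor:Pegysw}, $P^+_{h,\caln}(\mathbf{1})=-\mathfrak{sw}_h^{norm}(M)$, produces the cardinality statement.

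The delicate point, on which I would spend the actual work, is the second paragraph: proving that under $Z_K|_\caln\le E^*_v|_\caln$ the ``$\nsucc Z_K-E$'' truncation of $\check P^+_{h,\caln}$ coincides on the nose with ``lying in $\widetilde\calP_\caln^{top}$ off its non--coordinate facets'', and that the strict versus non--strict inequalities in play (the $\prec$ of $q_{h,\cali}$ against the $\ngeq$ of $Q_{h,\cali}$, and the $x_e>0$ against $x_e\ge 0$ descriptions of the coordinate facets) match so that the facet inclusion--exclusion is the genuine Ehrhart--Macdonald--Stanley one with no residual terms. This is precisely where the hypothesis is indispensable: without it some $\widetilde\calP_{\cali^m}$ protrudes from $\widetilde\calP_\caln^{top}$ or contributes lattice points off its facets, the telescoping fails, and one is left only with the general alternating formula \eqref{eq:sw}.
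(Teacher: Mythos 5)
Your overall plan is pointed in the right direction (return to \eqref{eq:pol}/\eqref{eq:sw} and show that only the ``top'' term survives under the hypothesis), and your first paragraph correctly senses the relevant arithmetic. However, the mechanism you then invoke in the middle paragraph --- equivariant Ehrhart--Macdonald--Stanley reciprocity together with a ``telescoping'' inclusion--exclusion over the facet poset of $\widetilde\calP_\caln^{top}$ --- is not how the collapse happens, and this is a genuine gap. The alternating sum in \eqref{eq:pol}--\eqref{eq:sw} is indexed by sub--multisets $\cali^m\subset\caln^m$, i.e.\ by the terms of the numerator of $f$, and the corresponding objects $\widetilde\calP_{\cali^m}$ are \emph{different dilated polytopes}, not facets of $\widetilde\calP_\caln^{top}$. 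There is no facet--poset inclusion--exclusion to telescope, and the EMS reciprocity \eqref{eq:EMS} plays no role at this stage (it was already used once and for all in the proof of Theorem~\ref{thm:mpc}, which underlies \eqref{eq:dualpol}). Invoking Corollary~\ref{cor:NEW} and Proposition~\ref{Lm-6} here does not produce the claimed single--count identity for $\check P^+_{h,\caln}$, and you never verify that the signed binomial coefficients actually cancel in the way a telescoping would require.

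The step that is actually needed --- and that the paper supplies in a single line --- is purely combinatorial: under the hypothesis, every $\widetilde\calP_{\cali^m}$ with $\emptyset\neq\cali^m\subsetneq\caln^m$ contains no lattice points in $\Z_{>0}^{|\cale|}$, so that $R_h(\widetilde\calP_{\cali^m})=0$ for all proper sub--multisets and \eqref{eq:pol} degenerates to the $\cali^m=\caln^m$ term alone (whose sign is $(-1)^{|\cale|}(-1)^{|\cale|-2}=+1$, whose binomial is $1$, and whose exponent $\sum_v(\delta_v-2)E^*_v-\mathfrak{l}(\mathbf{x})$ equals $Z_K-E-\mathfrak{l}(\mathbf{x})$ by \eqref{ZK}). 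To see the emptiness: if $\mathbf{x}\in\Z_{>0}^{|\cale|}$ satisfies $\mathfrak{l}(\mathbf{x})_{v_0}\le l'(\cali^m)_{v_0}$ for some node $v_0$, then $\sum_e(E^*_e)_{v_0}\le\mathfrak{l}(\mathbf{x})_{v_0}\le l'(\cali^m)_{v_0}$; using $\sum_eE^*_e=l'_{top}-(Z_K-E)$ and $l'_{top}-l'(\cali^m)=\sum_{u}(\delta_u-2-k_u)E^*_u$ with at least one coefficient $\geq 1$, one gets $(E^*_{u_0})_{v_0}\le(Z_K-E)_{v_0}=(Z_K)_{v_0}-1$ for some node $u_0$, contradicting the hypothesis $(Z_K)_{v_0}\le(E^*_{u_0})_{v_0}$. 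You gesture at a very similar inequality in your first paragraph, but apply it to a different object (the exponents of $\check P^+_{h,\caln}$ rather than the lattice points of the dilated polytopes) and then rely on the unjustified telescoping to finish; as written, the proposal does not close the gap.
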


\begin{proof}
In (\ref{eq:pol}) only the submultiset $\cali^m=\caln^m$ has non--trivial contribution.
\end{proof}

\begin{example}
Let $M$ be a negative definite Seifert fibered rational homology sphere, ie. $\Gamma$ is a star-shaped tree. Assume that the central node $v_0$ has $d\geq 3$ legs, each with determinant $\alpha_i\geq 2$ ($i=1,\ldots, d$), and let $e$ be the orbifold Euler number of $M$. Then by the formula (\ref{ZK}) one calculates $(Z_K-E)|_{v_0}=(d-2-\sum_i 1/\alpha_i)/|e|$; and one also has $E_{v_0}^*|_{v_0}=1/|e|$
 (cf.  \cite[6.1.3]{LN}, \cite[11.1]{NOSZ}). Hence
 the assumption of Corollary \ref{prop:latcount} reads as
$$d-3<\sum_{i=1}^d\frac{1}{\alpha_i}.$$
Notice that this cannot happen if $d\geq 6$ and it is always satisfied whenever $d=3$. Hence, in particular, Proposition \ref{prop:latcount} implies a well--known result in the literature for Brieskorn homology spheres $\Sigma(p,q,r)$, saying that the normalized Seiberg--Witten invariant, or, equivalently, the normalized Casson invariant,  can be expressed as the number of lattice points in the polytope
$$\calP_{v_0}=\{\mathbf{x}\in \R_{\geq 0}^3 \ : \ pq\cdot x_1+qr\cdot x_2+pr \cdot x_3\leq pqr\}$$
but not on the non-coordinate facet. In fact, in this case $\calP_{v_0}$ coincides with the Newton polytope associated with the equation of the Brieskorn germ $(\{x^p+y^q+z^r=0\},0)$.

More details and different aspects can be found in \cite{FS},\cite{Nic01}, \cite{Sav}, \cite{NN1}, \cite{CK} and \cite{LN}.
\end{example}

\begin{example}\label{ex:tPolSW}
Let $M$ be the manifold associated with the following plumbing graph $\Gamma$:

\begin{picture}(200,50)(-60,0)
\put(70,40){\makebox(0,0){\small{$-2$}}}
\put(60,30){\makebox(0,0){\small{$w_1$}}}\put(220,30){\makebox(0,0){\small{$w_2$}}}
\put(90,40){\makebox(0,0){\small{$-1$}}}
\put(97,25){\makebox(0,0){\small{$v_1$}}}\put(197,25){\makebox(0,0){\small{$v_2$}}}
\put(70,30){\circle*{3}}
\put(110,40){\makebox(0,0){\small{$-7$}}}
\put(130,40){\makebox(0,0){\small{$-3$}}}
\put(100,10){\makebox(0,0){\small{$w_3$}}}\put(200,10){\makebox(0,0){\small{$w_4$}}}
\put(80,10){\makebox(0,0){\small{$-3$}}}
\put(180,10){\makebox(0,0){\small{$-3$}}}
\put(150,40){\makebox(0,0){\small{$-3$}}}

\put(170,40){\makebox(0,0){\small{$-7$}}}

\put(190,40){\makebox(0,0){\small{$-1$}}}
\put(210,40){\makebox(0,0){\small{$-2$}}}

\put(90,30){\circle*{3}}
\put(110,30){\circle*{3}}
\put(130,30){\circle*{3}}
\put(90,10){\circle*{3}}
\put(150,30){\circle*{3}}
\put(170,30){\circle*{3}}
\put(190,30){\circle*{3}}
\put(210,30){\circle*{3}}
\put(190,10){\circle*{3}}

\put(70,30){\line(1,0){140}}\put(90,10){\line(0,1){20}}
\put(190,10){\line(0,1){20}}
\end{picture}

$M$ is realized e.g. as the link
 of the Newton non-degenerate hypersurface singularity  $f:(\C^3,0)\to (\C,0)$, $f(x,y,z)=x^{13}+y^{13}+z^{3}+x^2y^2$. In the sequel, following the above discussions, we present the computation of the (dual) polynomial part using the lattice points of the topological polytope.

>From the negative inverse of the intersection form we read the linear forms
$$\mathfrak{l}_{v_j}:\R^4=\R\langle E_{w_1},\dots,E_{w_4}\rangle\to \R: \ \
\mathfrak{l}_{v_1}(\mathbf{x})=\langle(33,6,22,4),\mathbf{x}\rangle \ \mbox{and} \ \mathfrak{l}_{v_2}(\mathbf{x})=\langle(6,33,4,22),\mathbf{x}\rangle$$ associated with the nodes $v_1$ and $v_2$. Moreover, one gets $l'_{top}|_{\caln}=(78,78)$ and $Z_K|_{\caln}=(14,14)$ in the $(E_{v_1},E_{v_2})$-basis, hence the topological polytope is given by
$$\widetilde\calP^{top}_{\caln}:=\{\mathbf{x}\in \R^4 \ : \ \langle(33,6,22,4),\mathbf{x}\rangle\leq 78 \ \ \mbox{or} \ \ \langle(6,33,4,22),\mathbf{x}\rangle\leq 78\}.$$
The group $H\simeq\Z_3$ is generated by eg. $h_1=[E^*_{w_3}]$ and one can see by calculations that $[Z_K]=0$, or by the knowledge that $M$ is the link of a Gorenstein (in this case hypersurface) singularity. Moreover, $[E^*_{v_1}]=[E^*_{v_2}]=0$ too. Then (\ref{eq:dualpol}) implies that
$$\check{P}_{h,\caln}^+(\bt)=\sum_{\star}\bt^{\mathfrak{l}(x)}-\sum_{\star_1}\bt^{E^*_{v_1}+\mathfrak{l}(x)}-\sum_{\star_2}\bt^{E^*_{v_2}+\mathfrak{l}(x)},$$
where $\bt:=\bt_\caln=(t_{v_1},t_{v_2})$ and the sums run under the following conditions:
\begin{eqnarray*}
(\star) \ : \ \ &\mathbf{x}\in\Z^4\cap \rho^{-1}(-h) \ &\mbox{and} \ \ \mathfrak{l}_\caln(\mathbf{x})\nsucc (13,13);\\
(\star_1) \ : \ \ &\mathbf{x}\in\Z^4\cap \rho^{-1}(-h) \ &\mbox{and} \ \ \mathfrak{l}_\caln(\mathbf{x})\nsucc (-53,1);\\
(\star_2) \ : \ \ &\mathbf{x}\in\Z^4\cap \rho^{-1}(-h) \ &\mbox{and} \ \ \mathfrak{l}_\caln(\mathbf{x})\nsucc (1,-53).
\end{eqnarray*}
Then it can be checked that for $h=0$ the lattice points satisfying the condition $(\star)$ are $(0,0,0,0)$, $(1,0,0,0)$, $(0,1,0,0)$, $(2,0,0,0)$, $(0,2,0,0)$, $(0,0,3,0)$ and $(0,0,0,3)$, creating the monomials $1$, $\bt^{(33,6)}$, $\bt^{(6,33)}$, $\bt^{(66,12)}$, $\bt^{(12,66)}$, $\bt^{(66,12)}$ and $\bt^{(12,66)}$ respectively. For $(\star_1)$ and $(\star_2)$ the only lattice point is $(0,0,0,0)$ with the associated monomials $\bt^{(66,12)}$ and $\bt^{(12,66)}$. Therefore, $\check{P}_{0,\caln}^+(\bt)=1+\bt^{(33,6)}+\bt^{(6,33)}+\bt^{(66,12)}+\bt^{(12,66)}$ which by (\ref{eq:dual}) implies
$$P_{0,\caln}^+(\bt)=\bt^{(13,13)}\cdot \check{P}_{h,\caln}^+(\bt^{-1})=\bt^{(1,-53)}+\bt^{(-53,1)}+\bt^{(7,-20)}+\bt^{(-20,7)}+\bt^{(13,13)}\ \mbox{and } \ -\mathfrak{sw}^{norm}_0(M)=5.$$
Similarly, for $h_1$ we get the lattice points (from $\rho^{-1}(2h_1)$) $(0,0,0,1)$, $(0,0,2,0)$ and $(0,1,0,1)$ satisfying $(\star)$ and associating the monomials $\bt^{(4,22)}$, $\bt^{(44,8)}$ and $\bt^{(10,55)}$. Hence, $\check{P}_{h_1,\caln}^+(\bt)=\bt^{(4,22)}+\bt^{(44,8)}+\bt^{(10,55)}$ and $-\mathfrak{sw}^{norm}_{h_1}(M)=3$. Symmetrically, one has $\check{P}_{2h_1,\caln}^+(\bt)=\bt^{(22,4)}+\bt^{(8,44)}+\bt^{(55,10)}$ and $-\mathfrak{sw}^{norm}_{2h_1}(M)=3$.

\end{example}

\subsection{Minkowski sums of polyhedrons associated with nodes}

For all the polytopes $\calP$ considered in this subsection we denote by $\calP^{\triangleleft}:=\calP\setminus (\calF\setminus\mathcal{G}^{co})$ the corresponding semi-open polytope and define the unbounded convex polyhedron
$\calP^+:=\R_{\geq 0}^{|\cale|}\setminus\calP^{\triangleleft}$.
In the sequel we discuss relations between polyhedrons of type $\widetilde{\calP}_{\cali^m}^+$ and Minkowski sums of $(\calP_{v}^{(E^*_v)})^+$. In particular, we prove for special cases a Khovanskii--Morales type IELP expression for the normalized Seiberg--Witten invariants using Minkowski sums.

\begin{lemma}\label{lem:vertpol}
%
For any $v\in \caln$ let $\cale_v$ be the set of those end-vertices $w\in\cale$ for which the unique minimal connected full subgraph $[w,v]$ does not contain any other nodes than $v$. Then for any $v'\in \caln$ we have
\begin{equation}
\frac{E^*_w|_v}{E^*_w|_{v'}}=\frac{E^*_v|_v}{E^*_{v'}|_{v}}=:\lambda_{vv'} \ \ \ \ \mbox{for any} \ \ w\in\cale_v.
\end{equation}
Moreover, if $v''\in\caln$ is a vertex of the subgraph $[v,v']$ for some $v,v'\in\caln$ then $\lambda_{vv'}=\lambda_{vv''}\cdot\lambda_{v''v'}$.
\end{lemma}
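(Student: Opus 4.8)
The plan is to reduce both statements to the one–parameter linear–algebra identity (\ref{eq:rk}) established inside the proof of Lemma~\ref{lem:red+}, applied to singleton subsets $\cali$. I would begin by recording two elementary facts: $E^*_x|_y=-(E^*_x,E^*_y)$, so $x,y\mapsto E^*_x|_y$ is symmetric, and by negative–definiteness of the intersection form (Section~\ref{s:prliminaries}) all of these numbers are strictly positive, so that every quotient below makes sense.

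For the first identity, fix $v\in\caln$, $w\in\cale_v$ and $v'\in\caln$; the case $v'=v$ is trivial, so assume $v'\ne v$. I would apply (\ref{eq:rk}) with $\cali=\{v\}$, letting the vertex in that formula be the end–vertex $w$. If $\Gamma_1$ denotes the component of $\Gamma\setminus\{v\}$ containing $w$, then $v$ is the unique vertex of $\cali$ adjacent to $\Gamma_1$, and (\ref{eq:rk}) reads
\begin{equation*}
E^*_w=r\cdot E^*_v+i\big(E^*_w(\Gamma_1)\big),\qquad r=\frac{(E^*_w,E^*_v)}{(E^*_v,E^*_v)}\neq 0 .
\end{equation*}
The crucial combinatorial observation is that $\Gamma_1$ contains neither $v$ nor $v'$: since $w\in\cale_v$, the path $[w,v]$ meets no node besides $v$, hence it is an unbranched chain, so every path issuing from $w$ leaves the leg only through $v$; therefore the correction cycle $i(E^*_w(\Gamma_1))$ has vanishing multiplicity along $E_v$ and along $E_{v'}$. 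Reading off the $E_v$– and $E_{v'}$–coordinates of the displayed identity gives $E^*_w|_v=r\,E^*_v|_v$ and $E^*_w|_{v'}=r\,E^*_v|_{v'}$, so $E^*_w|_v/E^*_w|_{v'}=E^*_v|_v/E^*_v|_{v'}=E^*_v|_v/E^*_{v'}|_v=\lambda_{vv'}$, which is independent of $w\in\cale_v$.

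For the multiplicativity, I would assume $v''$ is strictly interior to $[v,v']$ (otherwise one of $\lambda_{vv''}$, $\lambda_{v''v'}$ equals $1$ and the claim is trivial), so that $v''$ disconnects $v$ from $v'$ in the tree. Applying (\ref{eq:rk}) now with $\cali=\{v''\}$ and vertex $v$, and letting $\Gamma_v$ be the component of $\Gamma\setminus\{v''\}$ containing $v$ (which meets neither $v''$ nor $v'$), one gets $E^*_v=r\,E^*_{v''}+i(E^*_v(\Gamma_v))$ with $r=(E^*_v,E^*_{v''})/(E^*_{v''},E^*_{v''})$; taking $E_{v''}$– and $E_{v'}$–coordinates yields $E^*_v|_{v''}=r\,E^*_{v''}|_{v''}$ and $E^*_v|_{v'}=r\,E^*_{v''}|_{v'}$. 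Substituting,
\begin{equation*}
\lambda_{vv'}=\frac{E^*_v|_v}{E^*_v|_{v'}}=\frac{E^*_v|_v}{E^*_v|_{v''}}\cdot\frac{E^*_v|_{v''}}{E^*_v|_{v'}}=\lambda_{vv''}\cdot\frac{r\,E^*_{v''}|_{v''}}{r\,E^*_{v''}|_{v'}}=\lambda_{vv''}\cdot\lambda_{v''v'}.
\end{equation*}
The only point that needs genuine care is the support statement for the auxiliary components $\Gamma_1$ and $\Gamma_v$ — i.e.\ checking that the correction cycle produced by (\ref{eq:rk}) really annihilates the relevant node coordinates. In a tree this is a short argument (a leg attached at a node has no internal branching; a node lying on $[v,v']$ separates its endpoints), and everything else is routine bookkeeping of the coordinates of (\ref{eq:rk}).
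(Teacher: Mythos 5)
Your proof is correct, but it takes a genuinely different route from the paper's. The paper's argument is a one-liner invoking the Eisenbud--Neumann determinant formula $-(E^*_a,E^*_b)=\det(\Gamma\setminus[a,b])/|H|$: both identities then reduce to the multiplicativity of the determinant of the (negative) intersection form over connected components of a subforest, applied to the complements $\Gamma\setminus[w,v]$, $\Gamma\setminus[v,v']$, etc.\ and to the fact that removing a path $[a,b]$ from a tree produces a forest whose components hang off individual path vertices. You instead extract the purely structural identity (\ref{eq:rk}) from the proof of Lemma~\ref{lem:red+} (with singleton $\cali$, so $r$ is given explicitly) and read off the required ratios directly from the $E_v$-, $E_{v'}$- and $E_{v''}$-coordinates, using the symmetry $E^*_a|_b=E^*_b|_a$. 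Both arguments are sound; yours has the virtue of being self-contained within the paper's own machinery and of requiring no external reference, whereas the paper's determinantal approach packages all the tree-combinatorics into one classical formula and gives $\lambda_{vv'}$ a transparent meaning as a ratio of subforest determinants. One small caveat on your side: you correctly observe that $\Gamma_1$ contains neither $v$ nor $v'$ (and analogously for $\Gamma_v$ in the multiplicativity step), but it is worth stating explicitly that this is because $\Gamma_1=[w,v]\setminus\{v\}$ is an unbranched chain with no nodes — that is the real content of the hypothesis $w\in\cale_v$ — rather than leaving it to the reader to unwind ``every path issuing from $w$ leaves the leg only through $v$.''
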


\begin{proof}
It follows from
the formula of \cite{EN} expressing $-(E^*_{v'},E^*_{v''})$ as the fraction of the determinant (of the negative intersection form) of the graph $\Gamma\setminus[v',v'']$ and $|H|$. Here, $[v',v'']$ denotes the unique minimal connected subgraph containing the vertices $v'$ and $v''$.
%
\end{proof}

\begin{prop}\label{prop:MS}
(a) For any $k_v\in\Z_{\geq 0}, v\in\caln$ write $l':=\sum_vk_vE^*_v$. Then
 $$\sum_{v\in \caln}k_v (\calP^{(E^*_v)} _v)^{+} \subset
 ( \widetilde{\calP}^{(l')}_{sp^*(l')})^{+}.$$

(b) Let $v_1, v_2 \in\caln$ be two neighbouring nodes in $\Gamma$, ie. the minimal connected subgraph $[v_1,v_2]$ does not contain any other nodes. Then
$k_{v_1} (\calP^{(E^*_{v_1})}_{v_1})^{+}+k_{v_2} (\calP^{(E^*_{v_2})}_{v_2})^{+}=
(\widetilde{\calP}^{(k_{v_1}E^*_{v_1}+ k_{v_2}E^*_{v_2})}_{\{v_1,v_2\}})^{+}$.
\end{prop}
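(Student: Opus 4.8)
First I would put both sides into transparent half-space form. By the definitions of $\calP^{\triangleleft}$ and $\calP^{+}$, the unique non-coordinate facet of the simplex $\calP_v^{(l')}$ is the one carried by $\{\mathfrak{l}_v(\mathbf{x})=l'_v\}$, so $(\calP_v^{(l')})^{+}=\{\mathbf{x}\in\R^{|\cale|}_{\geq 0}:\mathfrak{l}_v(\mathbf{x})\geq l'_v\}$ is a closed polyhedron. Since $\widetilde\calP_\cali^{(l')}=\bigcup_{v\in\cali}\calP_v^{(l')}$, a short check of the facet structure gives $(\widetilde\calP_\cali^{(l')})^{\triangleleft}=\bigcup_{v\in\cali}(\calP_v^{(l')})^{\triangleleft}$, hence by De Morgan
$$(\widetilde\calP_\cali^{(l')})^{+}=\bigcap_{v\in\cali}(\calP_v^{(l')})^{+}=\{\mathbf{x}\geq 0:\mathfrak{l}_v(\mathbf{x})\geq l'_v\ \text{ for all }\ v\in\cali\}.$$
Also, for $k_v>0$ one has $k_v(\calP_v^{(E^*_v)})^{+}=\{\mathbf{x}\geq 0:\mathfrak{l}_v(\mathbf{x})\geq k_v E^*_v|_v\}$, while for $k_v=0$ it is $\{0\}$; so in part (a) only the indices $v\in sp^*(l')=:\cali$ contribute to the Minkowski sum.

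For part (a) it then suffices to prove the pointwise estimate, for $\mathbf{z}\in\R^{|\cale|}_{\geq 0}$ and nodes $u,v$,
$$\mathfrak{l}_u(\mathbf{z})\ \geq\ \tfrac{E^*_v|_u}{E^*_v|_v}\,\mathfrak{l}_v(\mathbf{z}).$$
Granting it, if $\mathbf{y}=\sum_{v\in\cali}\mathbf{y}_v$ with $\mathbf{y}_v\geq 0$ and $\mathfrak{l}_v(\mathbf{y}_v)\geq k_v E^*_v|_v$, then $\mathfrak{l}_u(\mathbf{y})=\sum_v\mathfrak{l}_u(\mathbf{y}_v)\geq\sum_v k_v E^*_v|_u=l'_u$ for every $u\in\cali$, i.e.\ $\mathbf{y}$ lies in the polyhedron $(\widetilde\calP_\cali^{(l')})^{+}$. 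Since $\mathbf{z}\geq 0$ and every $E^*_e|_v>0$, the estimate reduces to the coordinatewise inequality $E^*_e|_u/E^*_e|_v\geq E^*_v|_u/E^*_v|_v$ for all $e\in\cale$. Writing $e\in\cale_{v_0}$ (the sets $\cale_w$, $w\in\caln$, partition $\cale$) and abbreviating $\lambda_{ab}:=E^*_a|_a/E^*_a|_b$, Lemma \ref{lem:vertpol} turns this into $\lambda_{v_0v}\lambda_{vu}\geq\lambda_{v_0u}$, which I would prove by applying the multiplicativity of Lemma \ref{lem:vertpol} at the meeting vertex $m$ of the three paths joining $v_0,u,v$ in $\Gamma$ (a node): it rewrites the left-hand side as $\lambda_{v_0u}\cdot(\lambda_{mv}\lambda_{vm})$, and $\lambda_{mv}\lambda_{vm}=(E^*_m,E^*_m)(E^*_v,E^*_v)/(E^*_m,E^*_v)^2\geq1$ by Cauchy--Schwarz for the negative definite intersection form.

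For part (b), the inclusion $\subset$ is the case of (a) with $k_v=0$ for $v\notin\{v_1,v_2\}$ (together with the pointwise estimate above if some $k_{v_i}$ vanishes). For $\supset$ I would use that $v_1,v_2$ are neighbouring, so the chain $[v_1,v_2]$ carries no branch vertex and every $w\in\cale$ lies strictly on the $v_1$-side or on the $v_2$-side of it; write $\cale=\cale^{(1)}\sqcup\cale^{(2)}$ for the resulting partition. By Lemma \ref{lem:vertpol} and its cocycle along $[v_1,v_2]$ one gets $E^*_w|_{v_1}=\lambda_{v_1v_2}E^*_w|_{v_2}$ for $w\in\cale^{(1)}$ and $E^*_w|_{v_2}=\lambda_{v_2v_1}E^*_w|_{v_1}$ for $w\in\cale^{(2)}$, so that, putting $p_i:=\sum_{w\in\cale^{(i)}}E^*_w|_{v_i}x_w$, both $\mathfrak{l}_{v_1}(\mathbf{x})$ and $\mathfrak{l}_{v_2}(\mathbf{x})$ become explicit linear functions of $(p_1,p_2)$ alone. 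Given $\mathbf{x}\geq0$ in the right-hand polyhedron I would split it coordinatewise, $x_w=y_{1,w}+y_{2,w}$ with $y_{1,w}\in[0,x_w]$; then $\mathbf{y}_1,\mathbf{y}_2\geq0$, and the two membership conditions $\mathfrak{l}_{v_i}(\mathbf{y}_i)\geq k_i E^*_{v_i}|_{v_i}$ depend only on $q_i:=\sum_{w\in\cale^{(i)}}E^*_w|_{v_i}y_{1,w}$, which may be prescribed arbitrarily in $[0,p_i]$. They amount to two linear inequalities in $(q_1,q_2)$, and I would exhibit a solution in the box $[0,p_1]\times[0,p_2]$: if $p_1\geq k_1E^*_{v_1}|_{v_1}$ take $(q_1,q_2)=(k_1E^*_{v_1}|_{v_1},0)$, otherwise take $q_1=p_1$ and $q_2$ the least value permitted by the first inequality; in either case the second inequality follows from precisely the two defining inequalities $\mathfrak{l}_{v_i}(\mathbf{x})\geq l'_{v_i}$ of the right-hand polyhedron.

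The main obstacle is the inclusion $\supset$ of part (b): this is where the hypothesis that $v_1$ and $v_2$ be neighbouring nodes is essential (for non-adjacent nodes the Minkowski sum is in general strictly smaller than the concave polyhedron), and it combines the structural reduction to the two ``side sums'' $p_1,p_2$ furnished by Lemma \ref{lem:vertpol} with the explicit solution of the ensuing two-dimensional linear feasibility problem. Part (a) and the inclusion $\subset$ of (b) are comparatively formal; there the only non-combinatorial input is the sub-cocycle inequality $\lambda_{v_0v}\lambda_{vu}\geq\lambda_{v_0u}$, that is, Cauchy--Schwarz for the intersection form.
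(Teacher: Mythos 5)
Your proposal is correct, but it takes a genuinely different route from the paper. For part (a), both arguments ultimately rest on the same coordinatewise inequality $E^*_e|_u\, E^*_v|_v \geq E^*_e|_v\, E^*_v|_u$ for $e\in\cale$ and nodes $u,v$, but the proofs of that inequality differ: the paper reduces to the vertices of the Minkowski sum and invokes the cycle decomposition $r^{-1}E^*_e = E^*_v + (\text{effective cycle supported off }v)$ from the proof of Lemma~\ref{lem:red+}, so that $\mathfrak{l}(\mathbf{v}^j_v)\geq E^*_v$ coordinatewise; you instead convert both polyhedra to half-space form, factor the inequality through the closest node $v_0$ with $e\in\cale_{v_0}$, and close it with the cocycle identity of Lemma~\ref{lem:vertpol} plus the Cauchy--Schwarz bound $\lambda_{mv}\lambda_{vm}=(E^*_m,E^*_m)(E^*_v,E^*_v)/(E^*_m,E^*_v)^2\geq 1$ at the median node $m$. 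Your version makes the role of definiteness of the intersection form transparent, at the mild cost of tracking the median and checking it is always a node. For part (b), the divergence is larger: the paper normalizes to $k_{v_1}=k_{v_2}=1$ and matches the compact facets of $(\widetilde\calP^{(E^*_{v_1}+E^*_{v_2})}_{\{v_1,v_2\}})^+$ explicitly with Minkowski sums of compact faces of the two simplicial polyhedra, while you split $\mathbf{x}$ coordinatewise across the partition $\cale=\cale^{(1)}\sqcup\cale^{(2)}$, collapse the two membership conditions to a two-variable linear feasibility problem in the aggregates $(q_1,q_2)\in[0,p_1]\times[0,p_2]$, and solve it by a clean case split in which each case uses exactly one of the two defining inequalities of the concave polyhedron. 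Your (b) argument is arguably more elementary, avoiding any analysis of the face structure of Minkowski sums, and it localizes precisely which hypothesis is used where; one minor imprecision is the parenthetical remark that the inclusion $\subset$ when some $k_{v_i}=0$ follows from ``the pointwise estimate'' — what is really needed there is that $(\widetilde\calP^{(l')}_{\{v_1,v_2\}})^+ = (\widetilde\calP^{(l')}_{sp^*(l')})^+$ for $sp^*(l')\subsetneq\{v_1,v_2\}$, which is Lemma~\ref{lem:red+} (equivalently, your pointwise estimate applied once more), but the conclusion is the same.
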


\begin{proof}
(a) By definition of $(\calP^{(E^*_v)}_v)^+$ follows that the Minkowski sum $\sum_{v\in\caln}k_v(\calP^{(E^*_v)}_v)^+$ is also a convex polyhedron bounded from below by the convex hull of its vertices and the coordinate hyperplanes. Any vertex $\mathbf{v}$ of the Minkowski sum can be written as $\sum_{v\in\caln}\sum_{j=1}^{k_v}\mathbf{v}^j_v$, where $\mathbf{v}^j_v$ are vertices of the polyhedron $(\calP^{(E^*_v)}_v)^+$. Then we can write $\mathfrak{l}(\mathbf{v}^j_v)=r_v^{-1}E^*_w$ for some $w\in\cale$ and $r_v\in\Q_{>0}$ as in the proof of Lemma \ref{lem:red+}. Therefore we have $\mathfrak{l}(\mathbf{v}^j_v)\geq E^*_v$ which implies the inequality $\mathfrak{l}_{\caln}(\mathbf{v}_{\mathbf{k}})\geq \sum_{v\in\caln}k_vE^*_v|_{\caln}$ for all vertices, hence for all $\sum_{v\in\caln}k_v(\calP^{(E^*_v)}_v)^+$ too.

(b) Since the coefficients $k_{v_1},k_{v_2}$ do not change the combinatorial type of neither of the two polyhedrons (see eg. \cite{Gstrum} for Minkowski sums) we assume that $k_{v_1}=k_{v_2}=1$. By (a) we have to prove that the compact faces of
$(\widetilde{\calP}^{(E^*_{v_1}+ E^*_{v_2})}_{\{v_1,v_2\}})^{+}$
can be written as Minkowski sums of compact faces of $(\calP^{(E^*_{v_1})}_{v_1})^{+}$ and $(\calP^{(E^*_{v_2})}_{v_2})^{+}$. By the definition of
$\widetilde{\calP}^{(E^*_{v_1}+ E^*_{v_2})}_{\{v_1,v_2\}}$
these compact faces are given by the hyperplanes $\mathfrak{l}_{v_1}=(E^*_{v_1}+E^*_{v_2})|_{v_1}$,  $\mathfrak{l}_{v_2}=(E^*_{v_1}+E^*_{v_2})|_{v_2}$ and their intersections. Now, consider the stratification $\cale=\cale_1\amalg\cale_2$ of the set of ends so that $\cale_i$ are those end-vertices which are contained in the same connected subgraph as $v_i$ if we cut out $[v_1,v_2]\setminus\{v_1,v_2\}$ from $\Gamma$. We denote by $\mathbf{v}^{(j)}_{i}$ the (non-zero) vertex of $\calP^*_{v_i}$ which are situated on the coordinate axis corresponding to $w\in\cale_j$ and let $F\langle\mathbf{v}^{(j)}_{i,w}\rangle$ be the face generated by the vertices of type $\mathbf{v}^{(j)}_{i,w}$ for some fixed $i$ and $j$.

Then our claim is that the compact facets of
$\widetilde{\calP}^{(E^*_{v_1}+ E^*_{v_2})}_{\{v_1,v_2\}}$
are given by the Minkowski sums $F\langle\mathbf{v}^{(1)}_{1},\mathbf{v}^{(2)}_{1}\rangle+F\langle\mathbf{v}^{(2)}_{2}\rangle$, $F\langle\mathbf{v}^{(1)}_{1}\rangle+F\langle\mathbf{v}^{(2)}_{1}\mathbf{v}^{(2)}_{2}\rangle$, and their intersection is given by $F\langle\mathbf{v}^{(1)}_{1}\rangle+F\langle\mathbf{v}^{(2)}_{2}\rangle$. Indeed, using Lemma \ref{lem:vertpol} we deduce that
$$\mathfrak{l}_{v_1}=\mathfrak{l}_{v_1}|_{\cale_{1}}+(1/\lambda_{v_2v_1})\mathfrak{l}_{v_2}|_{\cale_{2}} \ \ \ \mbox{and } \ \ \  \mathfrak{l}_{v_2}=(1/\lambda_{v_1v_2})\mathfrak{l}_{v_1}|_{\cale_{1}}+\mathfrak{l}_{v_2}|_{\cale_{2}},$$
where $\mathfrak{l}_{v_i}|_{\cale_{i}}(\mathbf{x})=\sum_{w\in\cale_i}x_w E^*_w|_{v_i}$. Hence, $\mathfrak{l}_{v_1}(F\langle\mathbf{v}^{(1)}_{1},\mathbf{v}^{(2)}_{1}\rangle+F\langle\mathbf{v}^{(2)}_{2}\rangle)=E^*_{v_1}|_{v_1}+(1/\lambda_{21})E^*_{v_2}|_{v_2}=(E^*_{v_1}+E^*_{v_2})|_{v_1}$. Similarly, one has
$\mathfrak{l}_{v_2}(F\langle\mathbf{v}^{(1)}_{1}\rangle+F\langle,\mathbf{v}^{(2)}_{1}\mathbf{v}^{(2)}_{2}\rangle)=(E^*_{v_1}+E^*_{v_2})|_{v_2}$ and $F\langle\mathbf{v}^{(1)}_{1}\rangle+F\langle\mathbf{v}^{(2)}_{2}\rangle$ satisfies both equations.
\end{proof}

\begin{corollary}\label{cor:swMS}
For a graph $\Gamma$ with two nodes the formula (\ref{eq:sw}) can be interpreted by Minkowski sums in the following way

\begin{equation*}\label{eq:swMin}
-\mathfrak{sw}_h^{norm}(M)=\sum_{\emptyset\neq
\cali^m\subset \caln^m}(-1)^{|\cale|-|\cali^m|}
\bar{R}_h\Big(\sum_{j}k_{i_j}(\calP_{v_{i_j}}^{(E^*_{v_{i_j}})})^+\Big),
\end{equation*}
where $\bar{R}_h(\calP)$ is a $\rho^{-1}([l']-h)$-lattice point counting with all coordinates being strictly positive and not in the interior of $\calP$.
\end{corollary}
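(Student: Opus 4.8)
The plan is to start from the inclusion--exclusion formula \eqref{eq:sw} of Theorem~\ref{th:swform}, to rewrite each polyhedron $(\widetilde\calP_{\cali^m})^+$ as a Minkowski sum using Proposition~\ref{prop:MS}(b), and then to reconcile the two lattice--point counts $R_h$ and $\bar{R}_h$ by a short analysis of the boundary conventions. By Theorem~\ref{th:swform} we have $-\mathfrak{sw}_h^{norm}(M)=\sum_{\emptyset\neq\cali^m\subset\caln^m}(-1)^{|\cale|-|\cali^m|}R_h(\widetilde\calP_{\cali^m})$, so it suffices to show, for each nonempty $\cali^m\subset\caln^m$, that $R_h(\widetilde\calP_{\cali^m})=\bar{R}_h\big(\sum_j k_{i_j}(\calP^{(E^*_{v_{i_j}})}_{v_{i_j}})^+\big)$. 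Since $\Gamma$ is a tree with exactly two nodes $\caln=\{v_1,v_2\}$, every such $\cali^m$ is of the form $\{v_1^{k_1},v_2^{k_2}\}$ with $0\le k_i\le\delta_{v_i}-2$, $(k_1,k_2)\neq(0,0)$, and associated cycle $l'(\cali^m)=k_1E^*_{v_1}+k_2E^*_{v_2}$; moreover $v_1$ and $v_2$ are neighbouring nodes in the sense of Proposition~\ref{prop:MS}(b), because the path $[v_1,v_2]$ cannot contain a third node.

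The first step is to identify the polyhedra. If $k_1,k_2>0$, then $\widetilde\calP_{\cali^m}=\widetilde\calP^{(k_1E^*_{v_1}+k_2E^*_{v_2})}_{\{v_1,v_2\}}$ and Proposition~\ref{prop:MS}(b) gives directly $(\widetilde\calP_{\cali^m})^+=k_1(\calP^{(E^*_{v_1})}_{v_1})^++k_2(\calP^{(E^*_{v_2})}_{v_2})^+$. If one of the exponents vanishes, say $k_2=0$, then $\cali=\{v_1\}$ and $\widetilde\calP_{\cali^m}=\calP^{(k_1E^*_{v_1})}_{v_1}=k_1\,\calP^{(E^*_{v_1})}_{v_1}$; as $\R_{\ge 0}^{|\cale|}$ is a cone and dilation by the positive integer $k_1$ merely rescales the non--coordinate facet, the operation $\calP\mapsto\calP^+$ commutes with this dilation, so again $(\widetilde\calP_{\cali^m})^+=k_1(\calP^{(E^*_{v_1})}_{v_1})^+$. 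In every case $(\widetilde\calP_{\cali^m})^+=\sum_j k_{i_j}(\calP^{(E^*_{v_{i_j}})}_{v_{i_j}})^+$ is precisely the polyhedron in the statement, and the cycle it is dilated by is $l'(\cali^m)$.

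The second step is to match the two counts. By definition $(\widetilde\calP_{\cali^m})^+=\R_{\ge 0}^{|\cale|}\setminus(\widetilde\calP_{\cali^m})^{\triangleleft}$, where $(\widetilde\calP_{\cali^m})^{\triangleleft}$ is the closed concave polytope $\widetilde\calP_{\cali^m}$ with its non--coordinate facets deleted. Let $\mathbf{x}$ be a lattice point with all coordinates strictly positive; then $\mathbf{x}$ meets no coordinate hyperplane, so a small ball around it lies in $\R_{>0}^{|\cale|}$. Distinguishing whether $\mathbf{x}$ lies in the interior of $\widetilde\calP_{\cali^m}$, on one of its non--coordinate facets, or strictly outside $\widetilde\calP_{\cali^m}$, one checks locally that $\mathbf{x}$ is \emph{not} in the interior of $(\widetilde\calP_{\cali^m})^+$ if and only if $\mathbf{x}\in\widetilde\calP_{\cali^m}$. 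Since the residue--class constraint $\rho^{-1}([l'(\cali^m)]-h)$ is the same on both sides, this yields $R_h(\widetilde\calP_{\cali^m})=\bar{R}_h\big((\widetilde\calP_{\cali^m})^+\big)$, and substituting into the formula of Theorem~\ref{th:swform} proves the corollary.

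The step I expect to be the main obstacle is the last one: it requires carefully aligning the semi--open conventions carried by $(\cdot)^{\triangleleft}$ and $(\cdot)^+$, which selectively keep or discard the non--coordinate facets, with the ``strictly positive lattice points of the closed polytope'' counting in Theorem~\ref{th:swform}; one must also make explicit the degenerate sub-multisets (with a vanishing exponent) that are not literally covered by the statement of Proposition~\ref{prop:MS}(b). Both points are routine but need to be spelled out.
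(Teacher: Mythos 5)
Your proposal is correct and matches what the paper intends: it states Corollary~\ref{cor:swMS} without proof, as an immediate consequence of combining Theorem~\ref{th:swform} with Proposition~\ref{prop:MS}(b), and your argument is precisely the expansion of that ``immediate consequence.'' You correctly handle the two points that the paper leaves implicit: the degenerate sub--multisets $\cali^m$ supported on a single node, where Proposition~\ref{prop:MS}(b) is not literally invoked but the identity $k_1(\calP^{(E^*_{v_1})}_{v_1})^+=(\calP^{(k_1E^*_{v_1})}_{v_1})^+$ follows directly from convexity and homogeneity of the defining inequality; and the reconciliation of $R_h(\widetilde\calP_{\cali^m})$ with $\bar R_h\bigl((\widetilde\calP_{\cali^m})^+\bigr)$, where your case analysis (interior of the concave polytope, non--coordinate facet, strict exterior) for strictly positive lattice points is exactly what makes the two conventions agree.
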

The next example has two messages. First, it is an example with three nodes when
the corresponding identity from Proposition \ref{prop:MS}(b) is not valid.
On the other hand, for this example the number of lattice points in the two (distinct)
polytopes agree. This fact suggests  that though
 the validity of Proposition \ref{prop:MS}(b) cannot be expected in general, a Khovanskii--Morales type IELP  formula for the normalized Seiberg--Witten invariants using Minkowski sums may exist.
\begin{example}
Consider the following graph with trivial $H$:

\begin{picture}(200,50)(-65,0)
\put(70,40){\makebox(0,0){\small{$-2$}}}
\put(90,40){\makebox(0,0){\small{$-1$}}}
\put(97,25){\makebox(0,0){\small{$v_1$}}}
\put(177,25){\makebox(0,0){\small{$v_3$}}}
\put(137,25){\makebox(0,0){\small{$v_2$}}}
\put(70,30){\circle*{3}}
\put(110,40){\makebox(0,0){\small{$-9$}}}
\put(130,40){\makebox(0,0){\small{$-1$}}}
\put(80,10){\makebox(0,0){\small{$-3$}}}\put(120,10){\makebox(0,0){\small{$-2$}}}
\put(160,10){\makebox(0,0){\small{$-3$}}}
\put(150,40){\makebox(0,0){\small{$-13$}}}

\put(170,40){\makebox(0,0){\small{$-1$}}}

\put(190,40){\makebox(0,0){\small{$-2$}}}

\put(90,30){\circle*{3}}
\put(110,30){\circle*{3}}
\put(130,30){\circle*{3}}
\put(130,10){\circle*{3}}
\put(90,10){\circle*{3}}
\put(150,30){\circle*{3}}
\put(170,30){\circle*{3}}
\put(190,30){\circle*{3}}

\put(170,10){\circle*{3}}

\put(70,30){\line(1,0){120}}\put(90,10){\line(0,1){20}}
\put(170,10){\line(0,1){20}}\put(130,10){\line(0,1){20}}
\end{picture}

The normalized Seiberg--Witten invariant has been calculated in \cite[6.5]{LSz} using the polynomial $P^+_1$ (see \ref{ss:PPmothist} for details) developed in the very same article. Now we restrict our attention to the Minkowski sums of polyhedrons $(\calP^{(E^*_{v_i})}_{v_i})^+$.

The polytopes associated with the nodes are $\calP^{(E^*_{v_1})}_{v_1}=\{\mathbf{x}\in \R^5 : \langle(93,62,42,36,24),\mathbf{x}\rangle\leq 186\}$, $\calP^{(E^*_{v_2})}_{v_2}=\{\mathbf{x}\in \R^5 : \langle(42,28,21,18,12),\mathbf{x}\rangle\leq 42\}$ and $\calP^{(E^*_{v_3})}_{v_3}=\{\mathbf{x}\in \R^5 : \langle(36,24,18,21,14),\mathbf{x}\rangle\leq 42\}$. The Minkowski sums can be described as follows: $(\calP^{(E^*_{v_1})}_{v_1})^+ + (\calP^{(E^*_{v_2})}_{v_2})^+=(\widetilde{\calP}^{(E^*_{v_1}+E^*_{v_2})}_{\{v_1,v_2\}})^+$, $(\calP^{(E^*_{v_2})}_{v_2})^+ + (\calP^{(E^*_{v_3})}_{v_3})^+=(\widetilde{\calP}^{(E^*_{v_2}+E^*_{v_3})}_{\{v_2,v_3\}})^+$ follows from Proposition \ref{prop:MS}(b), while
$$(\calP^{(E^*_{v_1})}_{v_1})^+ + (\calP^{(E^*_{v_3})}_{v_3})^+=(\widetilde{\calP}^{(E^*_{v_1}+E^*_{v_3})}_{\{v_1,v_3\}}\cup \calP^{(l')}_{v_1 v_3})^+$$
where the extra polytope is given by
$$\calP^{(l')}_{v_1 v_3}:=\{\mathbf{x}\in \R^5 : \langle(93,62,42,49,98/3),\mathbf{x}\rangle\leq l'|_{v_1}:=(E^*_{v_1}+(\lambda_{v_2v_3}\lambda_{v_3v_1}/\lambda_{v_2v_1})\cdot E^*_{v_3})|_{v_1}=284\}$$ and $\lambda_{v_iv_j}$ are the constants from Lemma \ref{lem:vertpol}.  Similarly, one has
$$(\calP^{(E^*_{v_1})}_{v_1})^+ + (\calP^{(E^*_{v_2})}_{v_2})^+ +(\calP^{(E^*_{v_3})}_{v_3})^+=(\widetilde{\calP}^{(E^*_{v_1}+E^*_{v_2}+E^*_{v_3})}_{\{v_1,v_2,v_3\}}\cup \calP^{(l'+E^*_{v_2})}_{v_1 v_3})^+.$$
One can calculate $R_0(\widetilde{\calP}_{\{v_i\}})=R_0(\widetilde{\calP}_{\{v_2,v_3\}})=0$, $R_0(\widetilde{\calP}_{\{v_1,v_2\}})=1$, and we also find the identities  $\bar{R}_0((\calP^{(E^*_{v_1})}_{v_1})^+ + (\calP^{(E^*_{v_3})}_{v_3})^+)=R_0(\widetilde{\calP}_{\{v_1,v_3\}})=1$ and
$\bar{R}_0((\sum_{i=1}^3\calP^{(E^*_{v_i})}_{v_i})^+)=R_0(\widetilde{\calP}_{\{v_1,v_2,v_3\}})=15$. Hence,
$$-\mathfrak{sw}_h^{norm}(M):=\sum_{\emptyset\neq I\subset \{v_1,v_2,v_3\}}(-1)^{5-|I|}\bar{R}_0(\sum_{i\in I}(\calP^{(E^*_{v_i})}_{v_i})^+)=13.$$

\end{example}

%
%

\end{document}